\newtheorem{thm}{Theorem}[section]
\newtheorem{lem}[thm]{Lemma}
\newtheorem{cor}[thm]{Corollary}
\newtheorem{prop}[thm]{Proposition}
\newtheorem*{mainthm}{Main Theorem}
\newtheorem*{mainlem}{Main Lemma}
\newtheorem*{mainlempri}{Main Lemma$'$}
\theoremstyle{definition}
\newtheorem*{defi}{Definition}
\newtheorem*{rmk}{Remark}
\newcommand{\EC}{\widehat{\mathbb{C}}}
\newcommand{\A}{\mathbb{A}}
\newcommand{\C}{\mathbb{C}}
\newcommand{\D}{\mathbb{D}}
\newcommand{\BH}{\mathbb{H}}
\newcommand{\K}{\mathbb{K}}
\newcommand{\N}{\mathbb{N}}
\newcommand{\R}{\mathbb{R}}
\newcommand{\T}{\mathbb{T}}
\newcommand{\Z}{\mathbb{Z}}
\newcommand{\MI}{\mathcal{I}}
\newcommand{\MO}{\mathcal{O}}
\newcommand{\MP}{\mathcal{P}}
\newcommand{\MV}{\mathcal{V}}
\newcommand{\MCV}{\mathcal{CV}}
\newcommand{\ii}{\textup{i}}
\newcommand{\re}{\textup{Re}\,}
\newcommand{\im}{\textup{Im}\,}
\newcommand{\Exp}{\textup{Exp}}
\newcommand{\Crit}{\textup{Crit}}
\newcommand{\diam}{\textup{diam}}
\newcommand{\dist}{\textup{dist}}
\newcommand{\Mod}{\textup{mod}\,}
\makeatletter\@addtoreset{equation}{section}\makeatother
\begin{document}

\author[S. Wang]{Shuyi Wang}
\address{School of Mathematical Sciences, Qufu Normal University, Qufu 273165, P. R. China}
\email{sywang$\underline{~~}$math@126.com}

\author[F. Yang]{Fei Yang}
\address{School of Mathematics, Nanjing University, Nanjing 210093, P. R. China}
\email{yangfei@nju.edu.cn}

\author[G. Zhang]{Gaofei Zhang}
\address{School of Mathematics, Nanjing University, Nanjing 210093, P. R. China}
\email{zhanggf@nju.edu.cn}

\author[Y. Zhang]{Yanhua Zhang}
\address{School of Mathematical Sciences, Qufu Normal University, Qufu 273165, P. R. China}
\email{zhangyh0714@qfnu.edu.cn}

%---------------------------------------------------------------------------------------------------------------
\title[Local connectivity of some Julia sets]{Local connectivity of Julia sets of some rational maps with Siegel disks}

\begin{abstract}
We prove that a long iteration of rational maps is expanding near boundaries of bounded type Siegel disks. This leads us to extend Petersen's local connectivity result on the Julia sets of quadratic Siegel polynomials to a general case. A new key feature in the proof is that the puzzles are not used.
\end{abstract}

% AMS subject classifications (used in AMS journals)
\subjclass[2020]{Primary 37F10; Secondary 37F50}
% Open CTEX\MiKTeX\tex\latex\ams\classes\amsart, search 2010 and add 2020
% \subjclass[2010]{Primary 37F45; Secondary 37F50}

% AMS keywords (used in AMS journals)
\keywords{Julia sets; local connectivity; Siegel disks; rational maps}

% today's date, or fill in whatever date you prefer
\date{\today}

% acknowledge support, etc
% \thanks{This work was supported by Natural Science Foundation of China (grant No.\,12071210) and Natural Science Foundation of Jiangsu Province (grant No.\,BK20191246).}

% dedication
% \dedicatory{Dedicated to Professor Donald Knuth on the occasion of his $100$th birthday}

\maketitle

%----------------------------------------------------------------------------------------------------------------
{%\setcounter{tocdepth}{1}
\tableofcontents
}

%----------------------------------------------------------------------------------------------------------------
\section{Introduction}

The local connectivity of the Julia sets of rational maps is one of the central themes in complex dynamics.
By constructing expanding metrics, the connected Julia sets of hyperbolic, subhyperbolic and geometrically finite rational maps have been proved to be locally connected (see \cite{DH84}, \cite{TY96}). For the maps which do not have expanding metric near the Julia sets, constructing puzzles is the main idea in the study of the local connectivity of the Julia sets.
In the 1980s, Yoccoz proved that if the non-renormalizable quadratic polynomials have no irrationally indifferent periodic points, then their Julia sets are locally connected provided they are connected \cite{Hub93}. The so-called Yoccoz puzzle plays a crucial role in the proof. To study the topology of the Julia sets of cubic polynomials, the Branner-Hubbard puzzle was introduced in \cite{BH92}. By developing the puzzle techniques further, various Julia sets without Siegel disks have been proved to be locally connected. See \cite{HJ93}, \cite{Lyu97}, \cite{LS98}, \cite{KSS07b}, \cite{KL09a}, \cite{KS09}, \cite{Roe10}, \cite{CST17} for polynomials, and \cite{Roe08}, \cite{QWY12}, \cite{DS22}, \cite{WYZ23} for some special rational maps (see also \cite{CDKS22}).

Based on the Douady-Ghys surgery on cubic Blaschke products, Petersen proved that the Julia sets of quadratic polynomials with a fixed bounded type Siegel disk are locally connected \cite{Pet96} (see also \cite{Pet98} for the quadratic polynomials with periodic bounded type Siegel disks). Later Yampolsky derived this classical result from a priori bounds of unicritical analytic circle maps \cite{Yam99}. One of the main tools in their proofs is a puzzle structure, which is refereed to as Petersen's puzzle.
Petersen's puzzle construction can be adapted to polynomials of higher degrees by which J. Yang proved that the Julia sets of any polynomials (assumed to be connected) are locally connected at the boundary points of bounded type Siegel disks \cite{Yan23J}.
However, for rational maps, except in rare cases, for instances, the mating examples of Yampolsky and Zakeri \cite{YZ01} and some other examples in \cite{Yan15}, there is in general no puzzle structure. Hence the study of the topology of the Julia sets of rational maps is always a great challenge, especially when there is no expanding metric near the Julia sets.

Near boundaries of Siegel disks, the rational maps are far from being expanding.
Since the puzzle construction may not exist for general rational maps with Siegel disks, the corresponding Julia sets may have exotic topology in this situation. For instance, when a rational map has both Siegel disks and attracting basins, a prior, the boundary of an immediate attracting basin may spiral around a Siegel disk in a very complicated manner and the Julia set may be non-locally connected.

\subsection{Main results}

The main purpose of this paper is to extend Petersen's local connectivity result to a class of rational maps with bounded type Siegel disks. In particular, it implies that the exotic topology of the whole Julia sets mentioned above cannot occur when the critical orbits are well controlled, and moreover, it provides an alternative proof of Petersen's result without using puzzles.

\begin{mainthm}
Suppose $f$ is a rational map with Siegel disks such that the Julia set $J(f)$ is connected, and moreover, the forward orbit of every critical point of $f$ satisfies one of the following:
\begin{enumerate}
\item It is finite; or
\item It lies in an attracting basin; or
\item It intersects the closure of a bounded type Siegel disk.
\end{enumerate}
Then $J(f)$ is locally connected.
\end{mainthm}

As an immediate application of the Main Theorem, the Julia sets of all cubic Siegel polynomials in the Zakeri curve and in capture domains are locally connected, where the Zakeri curve is the collection of all cubic polynomials with both critical points on the boundary of a fixed Siegel disk of a given bounded type rotation number \cite{Zak99}.
As another application of the Main Theorem, we conclude that if the cubic Newton map or the McMullen map $z\mapsto z^n+\lambda/z^n$ with $n\geqslant 3$ has a bounded type Siegel disk, then the Julia sets are locally connected. For in this case their Julia sets are connected and their critical points lie either in attracting basins or on the preimages of the boundaries of bounded type Siegel disks. This complements the results of \cite[Theorem 4]{Roe08} and \cite[Theorem 1.3]{QWY12}.

\medskip
For a rational map $f$ with degree at least two, let $\Crit(f)$ be the set of all critical points of $f$. The \textit{postcritical set} of $f$ is
$$
\MP(f):=\overline{\bigcup_{n\geqslant 1 } f^n\big(\Crit(f)\big)}.
$$
Let $\EC$, $\dist_{\EC}(\cdot,\cdot)$ and $\diam_{\EC}(\cdot)$ denote respectively the Riemann sphere, the distance and diameter with respect to the spherical metric.
The following lemma is the key ingredient in the proof of  the Main Theorem.

\begin{mainlem}
Let $f$ be a rational map with degree at least two and having a fixed bounded type Siegel disk $\Delta$.  Suppose $\dist_{\EC}(\MP(f)\setminus\partial{\Delta},\partial{\Delta})>0$. Then for any $\varepsilon>0$ and any Jordan disk $V_0\subset \EC\setminus\overline{\Delta}$ satisfying $\overline{V}_0 \cap \MP(f)\neq\emptyset$ and $\overline{V}_0 \cap \MP(f) \subset \partial\Delta$, there exists $N \geqslant 1$, such that $\diam_{\EC}(V_n)<\varepsilon$ for all $n\geqslant N$, where $V_n$ is any connected component of $f^{-n}(V_0)$.
\end{mainlem}

The condition $\dist_{\EC}(\MP(f)\setminus\partial{\Delta},\partial{\Delta})>0$ in the Main Lemma is regarded as automatically satisfied if $\MP(f)\setminus\partial{\Delta}=\emptyset$.
The Main Lemma asserts that a contraction property of the composition of inverse branches near boundaries of bounded type Siegel disks holds for all pullback sequences and moreover, the contraction is ``uniform''. That is, for given $V_0$, the contraction depends only on the length of the pullback sequence, but not on the choice of the inverse branch for each pullback.
Such a contraction property may also be obtained by using complex a prior bounds of multi-critical analytic circle maps \cite{ESY22} (see \cite{Yam99} for the case of unicritical and also \cite{Yam19}).

The uniform contraction in the Main Lemma is essential for mating Siegel polynomials as well as the proof of local connectivity of the Julia sets with Siegel disks. In the celebrated work of Yampolsky-Zakeri \cite{YZ01} on the mating of quadratic Siegel disks, such uniform contraction property is obtained by using complex a prior bounds together with certain puzzle structure. See \cite{BV06} and \cite{Yan15} for a similar situation.
The Main Lemma does not assume the existence of any puzzle structure which may not exist for general rational maps.
Besides the applications of the Main Lemma in rational dynamics, a similar result as the Main Lemma can be used to prove the local connectivity of the Julia sets of transcendental entire functions with bounded type Siegel disks, where the puzzles are definitely not available (This will appear in a forthcoming paper, see \cite{WYZZ21} for partial results).

\subsection{Sketch of the proofs}

Let us first sketch the general idea of the proof of the Main Lemma. Let $f$ be the rational map in the Main Lemma having a fixed bounded type Siegel disk $\Delta$.  Note that analytic Blaschke models may not exist for general rational maps.
In Section \ref{sec-basic-tools}, we construct a \textit{quasi-Blaschke model} $G$ (which is quasi-regular) for $f$ such that the Siegel disk $\Delta$ (resp. the boundary $\partial\Delta$) of $f$ is replaced by the unit disk $\D$ (resp. the unit circle $\T$) of $G$. Such a model was first used by Petersen to study the Herman-\'{S}wi\c{a}tek theorem for holomorphic self-homeomorphisms of quasi-circles \cite{Pet04}. The heart of the proof in this paper is to show that a long iteration of $G$ is expanding near the unit circle.

Let $\MP(G)$ be the corresponding postcritical set of the model map $G$ in $\EC\setminus\D$. Since $\partial\Delta\subset\MP(f)$, we have $\T\subset \MP(G)$ (see Section \ref{subsec:quasi-Blaschke}). For simplicity, here we only consider the Jordan disk $V_0\subset \EC\setminus \overline{\D}$ which satisfies that $\overline{V}_0 \cap \MP(G)$ is a non-empty subarc on $\T$ and the general case can be reduced to such case (see Section \ref{subsec:key-thm}). We use $\{V_n\}_{n\geqslant 0}$ to denote the pullbacks of $V_0$ in $\EC\setminus\overline{\D}$ under $G$. Note that $G^n: V_n\to V_0$ is conformal and each $V_n$ is a Jordan disk (see Lemma \ref{lem:take-pre}). For some $N_0$ large enough, $\overline{V}_{N_0} \cap \mathbb T$ is contained in a small open arc $I_0\subset\T$ (see Lemma \ref{lem:eventually-short}) and the Jordan disk $V_{N_0}$ is contained in the following finite union in $\EC\setminus\overline{\D}$:
\begin{equation}
 H_d(I_0)  \cup \bigcup_{1\leqslant i \leqslant M_0}  W_i,
\end{equation}
where each $W_i$ is a Jordan disk compactly contained in $\EC\setminus\overline{\D}$ satisfying $\sharp\big(\overline{W}_i\cap\MP(G)\big)\leqslant  1$ and $H_d(I_0)$ is a half hyperbolic $d$-neighborhood of $I_0$ with $d>0$ being a given large number (see \eqref{equ:H-d-I} and Figure \ref{Fig_half-nbd} for the definition of $H_d(I)$ and Section \ref{subsec:key-thm} for the way to choose $W_i$'s).
By applying the classical Shrinking Lemma (see Lemma \ref{lem:semi-hyperbolic}) to each $W_i$, the Main Lemma can be reduced to proving Lemma \ref{lem:back-1}: For any $\varepsilon>0$, if $V_0=H_d(I_0)$ with $I_0$ small enough, then
\begin{equation}\label{equ:V-n-infty}
\diam_{\EC}(V_n)<\varepsilon \text{\quad for all } n\geqslant 0.
\end{equation}

To this end, for any small subarc $I_0 \subset \mathbb T$, we first give a rule in Section \ref{sec-pull-back} by which one can dynamically define an infinite sequence of arcs $\{I_n\}_{n\geqslant 0}$ on $\T$ by choosing two constants $0<\delta<\eta<1/2$ suitably, which is called a \textit{$(\delta, \eta)$-admissible sequence}.
For each $n\geqslant 0$, one of the branches of $G^{-1}$, say $\Phi_n$, is associated to $I_n \to I_{n+1}$. We call $I_{n+1}$ the \textit{pullback} of $I_n$ but note that sometimes $G(I_{n+1})\neq I_n$, especially when $I_n$ contains a critical value.
The process $I_n \to I_{n+1}$ is called a \textit{critical pullback} if there is a critical value on $\T$ close to $I_n$ (see Section \ref{sec-pull-back} for the precise definition).
We show that finitely many critical pullbacks will decrease the \textit{dynamical length} (see \eqref{equ:dynam-length} for the definition) $\sigma(I_n)$ of $I_n$ by a definite amount. In particular, $\sigma(I_n)< 2 \sigma(I_m)$ for all $n>m\geqslant 0$ and $\sigma(I_n) \to 0$ as $n\to\infty$ (see Lemma \ref{lemma:shorten}).

Let $I_{n_j} \to I_{n_j+1}$, where $j  \geqslant 1$, be the sequence of all critical pullbacks.
For a small $s_0 > 0$, let $D_{n_j}^{s_0}$ be the half Euclidean $s_0$-neighborhood of $I_{n_j}$ in $\C\setminus\overline{\D}$ which is disjoint with $\MP(G)$ (see \eqref{equ:D-n-j} and Figure \ref{Fig_neigh-XYZ}). Denote by $\Omega$ the unique component of $\EC \setminus(\MP(G) \cup \overline{\D})$ with $\T \subset \partial \Omega$.
The main result in Section \ref{sec-pull-back} is Proposition \ref{prop:three special regions}: There exists a number $T_0\geqslant 1$ depending only on $G$, such that for any $z\in D_{n_j}^{s_0}$, the composition $\Phi_{n_{j+T_0}} \circ \cdots \circ \Phi_{n_j+1}\circ \Phi_{n_j}$ of the inverses of $G$ either contracts the hyperbolic metric $\rho_\Omega(z)|dz|$ uniformly at $z$, or maps $z$ into a half hyperbolic $d$-neighborhood $H_d(J)$ of an arc $J\subset\T$ containing the base interval $I_{n_{j+T_0}+1}$, where the dynamical lengths of $J$ and $I_{n_{j+T_0}+1}$ are comparable.

For the proof of Proposition \ref{prop:three special regions}, we analyze the combinatoric information of the critical orbit of $G$ on $\T$ and apply the contraction property of $G^{-1}$ (see Lemma \ref{lemma:Pe}) several times at most of the place in $D_{n_j}^{s_0}$, except in a triangle-shaped region where the points will be mapped very close to the base interval $I_{n_{j+T_0}+1}$ because of the finitely many critical pullbacks. The contraction principle in Lemma \ref{lemma:Pe} was first observed by Petersen in \cite{Pet96}.

\medskip
Based on Proposition \ref{prop:three special regions} and the property $\sigma(I_n) \to 0$ as $n\to\infty$, for each $(\delta, \eta)$-admissible sequence $\{I_n\}_{n\geqslant 0}$, we construct an \textit{improved} $(\delta, \eta)$-admissible sequence $\{(J_n,F_n,d_n)\}_{n\geqslant 0}$ in Section \ref{subsec:improv-d-e}, where
\begin{itemize}
\item $J_n$ is an arc of $\T$ containing $I_n$ so that $\sigma(J_n) < 2\sigma(J_0)=2\sigma(I_0)$ and $\sigma(J_n) \to 0$  as $n \to \infty$; and
\item $F_n$ is the union of at most countably many Jordan disks intersecting $H_{d_n}(J_n)$ with $d_n\in[d,d+1]$ and having hyperbolic diameters with respect to $\rho_\Omega(z)|dz|$ less than a constant $K=K(\delta,\eta,d)>0$.
\end{itemize}
Such a sequence allows us to capture further the contraction property of a long iteration of the inverse of $G$ near $\T$ as the following (see Section \ref{subsec:improv-d-e}):
\begin{equation}
\Phi_{m-1}\circ\cdots\circ\Phi_{n+1}\circ\Phi_n\big(H_{d_n}(J_n)\cup F_n\big)\subset H_{d_m}(J_m) \cup F_m\subset D_m^{s_0},
\end{equation}
where $m>n\geqslant 0$. We have $\diam_{\EC}(H_{d_n}(J_n)\cup F_n)\to 0$ as $n \to \infty$ since $\sigma(J_n) \to 0$ as $n \to \infty$, $d_n\in[d,d+1]$ and the hyperbolic diameter of every component of $F_n$  is bounded above by $K$.

Suppose $\{V_n\}_{n\geqslant 0}$ is the sequence obtained by pulling back $V_0=H_d(I_0)$ under $G$. For given $\varepsilon>0$, by assuming $I_0$ small, if there is a sequence of improved  $(\delta, \eta)$-admissible sequences $\big\{\{(J_n^{(m)}, F_n^{(m)}, d_n^{(m)})\}_{n\geqslant 0}:m\geqslant 0\big\}$ such that for any $m\geqslant 0$,
\begin{equation}
V_n \subset H_{d_n^{(m)}}(J_n^{(m)}) \cup F_n^{(m)} \text{\quad for all } 0\leqslant  n\leqslant  m,
\end{equation}
then $\diam_{\EC}(V_n)<\varepsilon$ for all $n\geqslant 0$.
Otherwise, there exist a maximal $N<+\infty$ and an improved $(\delta,\eta)$-admissible sequence $\{(J_n,F_n,d_n)\}_{n\geqslant 0}$ such that $V_n \subset H_{d_n}(J_n)\cup F_n$ for all $0 \leqslant n \leqslant N$ but this does not hold for $N+1$.  In this case we say that the pullback $V_N \to V_{N+1}$ is a \textit{jump off}.
There are two types of jump offs.
For the first type jump off, $V_{N+1}$ is bounded away from $\mathbb T$. In this case, the size of all the subsequent pullbacks will be small by a routine argument.
For the second type jump off, $V_{N+1}$ is still very close to $\mathbb T$ so that $V_{N+1} \subset H_d(J)$ with $J$ being arbitrarily small provided that $I_0$ is small enough, and most importantly, the hyperbolic diameter of $V_{N+1}$ is bounded above by some constant, and the pullback $V_N \to V_{N+1}$ decreases the hyperbolic metric by a definite amount.
So if $I_0$ is small enough and $V_N \to V_{N+1}$ is a second type jump off, we can consider the jump off of the sequence $\{V_k\}_{k\geqslant N+1}$ with respect to $J$.  This process can thus be repeated. So either we will have a first type jump off at some time, or the number of the second type jump offs will be eventually large enough   so that   the hyperbolic diameter of $V_n$  is smaller than a given number. This implies Lemma \ref{lem:back-1} and the Main Lemma follows.

\medskip

The proof of the Main Theorem is presented in  Section \ref{sec-rational-case}.
There are two basic tools in the proof --  the Main Lemma and a criterion of Whyburn. The criterion says that a compact subset $X$  of $\widehat{\mathbb C}$ is locally connected if and only if the boundary of every component of $\widehat{\mathbb C} \setminus X$ is locally connected, and moreover, for any $\varepsilon > 0$, the number of the components of $\widehat{\mathbb C} \setminus X$ whose size is greater than $\varepsilon$ is finite.

The second condition of Whyburn's criterion follows directly from the Main Lemma.  Note that the boundaries of  bounded type Siegel disks are quasi-circles  \cite{Zha11}.
To verify the first condition, it suffices to show that the boundaries of immediate attracting basins  are locally connected if  the map has attracting cycles. The argument here is a bit subtle.
As we mentioned at the beginning of the introduction, the boundary of an attracting basin may turn around some Siegel disk in a very complicated manner so that the uniform contraction property may not hold -- for a large number of pullbacks are needed to unwind the object first before shrinking it.
To overcome this we will show that the \textit{homotopy complexity} of the internal rays in the immediate attracting basins actually have a uniform upper bound (see Lemma \ref{lem:essential-bd}). This allows us to apply the Main Lemma to deduce that the equipotential curves in the immediate attracting basins converge uniformly. Thus the boundaries of immediate attracting basins are locally connected, the first condition of Whyburn's criterion follows and the Main Theorem holds.

\medskip
\noindent\textbf{Notations.} We will use the following notations throughout this paper.
\begin{itemize}
\item Let $\R$, $\C$ and $\EC$, resp. be the real axis, complex plane and Riemann sphere.
\item For $a\in\C$ and $r>0$, denote $\D_r(a):=\{z\in\C:|z-a|<r\}$, $\D_r:=\D_r(0)$, $\D:=\D_1$, $\T_r:=\{z\in\C:|z|=r\}$ and $\T:=\T_1$.
\item For $r>1$, denote the annulus $\A_r:=\{z\in\C:1/r<|z|<r\}$.
\item If $X=\EC$ (resp. $\C$, or a hyperbolic domain), let $\dist_{X}(\cdot,\cdot)$ and $\diam_X(\cdot)$ be the distance and diameter with respect to the spherical (resp. Euclidean, or hyperbolic) metric. In particular, $\rho_X(z)|dz|$ is the hyperbolic metric in a hyperbolic domain $X$.
\item Two positive numbers $a,b$ are said to be $C$-comparable if $b/C\leqslant a \leqslant b C$. For two family of positive numbers $\{a_\lambda\}$ and $\{b_\lambda\}$ depending on the parameter $\lambda$, we write $a_\lambda\preccurlyeq b_\lambda$ if there exists a constant $C>1$ such that $a_\lambda\leqslant C b_\lambda$ for all $\lambda$. We write $a_\lambda\asymp b_\lambda$ if $b_\lambda\preccurlyeq a_\lambda\preccurlyeq b_\lambda$.
\end{itemize}

\medskip
\noindent\textbf{Acknowledgements.}
The authors are very grateful to the referee for very careful reading, insightful and detailed comments, suggestions and corrections.
They would also like to thank D. Dudko, Z. Li, M. Lyubich, W. Qiu, W. Shen, W. Su and J. Yang for helpful comments/suggestions on the paper.
This work was supported by NSFC (Grant Nos.\,12222107, 12171276).

\section{Quasi-Blaschke models}\label{sec-basic-tools}

In this section we define quasi-Blaschke models for the maps in the Main Lemma and introduce some tools in hyperbolic geometry.

\subsection{Quasi-Blaschke models and dynamical lengths}\label{subsec:quasi-Blaschke}

In the following, we fix the rational map $f$ which has a fixed bounded type Siegel disk $\Delta$ in the Main Lemma. According to \cite{Zha11}, $\partial\Delta$ is a quasi-circle containing at least one critical point of $f$. Without loss of generality, we assume that $\Delta$ is bounded in the complex plane $\C$.
Let
\begin{equation}\label{equ:qc-1}
\phi:\EC\setminus\overline{\Delta}\to\EC\setminus\overline{\D}
\end{equation}
be a conformal isomorphism fixing $\infty$.

For $z\in\EC$ (resp. $Z\subset\EC$), let $z^*=1/\overline{z}$ (resp. $Z^*=\{z^*: z\in Z\}$) be the symmetric image of $z$ (resp. $Z$) about the unit circle $\T$.
Let $\Crit(f)$ be the set of all critical points and $\MP(f)$ the postcritical set of $f$.
We extend $\phi$ to a quasiconformal homeomorphism $\phi:\EC\to\EC$ by considering the following two cases:
\begin{itemize}
\item (Non-capture case) If $\MP(f)\cap\Delta=\emptyset$, let $\phi:\EC\to\EC$ be any quasiconformal extension of \eqref{equ:qc-1};
\item (Capture case) If $\MP(f)\cap\Delta\neq\emptyset$, then the following set is nonempty and finite:
\begin{equation}
\MV:=\left\{f^n(c)
\left|
\begin{array}{l}
c\in\Crit(f)\text{ and } n\geqslant 1 \text{ such that} \\
f^{n-1}(c)\not\in\Delta \text{ and } f^n(c)\in\Delta
\end{array}
\right.
\right\}.
\end{equation}
Suppose $\MV=\{b_1,\cdots, b_m\}$, where $m\geqslant 1$. Let $c_1\in\Crit(f)$ and $n_1\geqslant 1$ such that $f^{n_1}(c_1)=b_1\in\Delta$ but $f^{n_1-1}(c_1)\not\in\Delta$.
We take $m$ different points $b_1',\cdots,b_m'$ in $\EC\setminus\overline{\Delta}$ and $m$ positive integers $k_1, \cdots, k_m$ such that
\begin{equation}\label{equ:f-k-i}
f^{k_i}(b_i')=c_1 \text{\quad for all } 1\leqslant  i\leqslant  m.
\end{equation}
Let $\phi:\EC\to\EC$ be a quasiconformal extension of \eqref{equ:qc-1} such that
\begin{equation}\label{equ:phi-b-i}
\phi(b_i)=\big(\phi(b_i')\big)^*\in\D \text{\quad for all } 1\leqslant  i\leqslant  m.
\end{equation}
\end{itemize}

In both cases, we define a \textit{quasi-Blaschke model} corresponding to $f$:
\begin{equation}\label{equ:G}
G(z):=\left\{
\begin{array}{ll}
\phi \circ f \circ \phi^{-1}(z) & \text{\quad if } z\in\EC\setminus\D,\\
\big(\phi \circ f \circ \phi^{-1}(z^*)\big)^* &\text{\quad if } z\in\D.
\end{array}
\right.
\end{equation}
By the construction and the assumption $\dist_{\EC}(\MP(f)\setminus\partial{\Delta},\partial{\Delta})>0$ in the Main Lemma, $G$ has the following properties:
\begin{itemize}
\item[(i)] $G$ commutes with $z\mapsto z^*$, i.e., $G(z^*)=\big(G(z)\big)^*$;
\item[(ii)] $G:\EC\setminus\D\to\EC$ is conjugate to $f:\EC\setminus\Delta\to\EC$ by the quasiconformal mapping $\phi^{-1}:\EC\to\EC$;
\item[(iii)] $G:\EC\to\EC$ is a quasi-regular map and analytic in $\EC\setminus (Q\cup Q^* )$, where $Q=\phi\big(\overline{f^{-1}(\Delta)\setminus\Delta}\big)$; and
\item[(iv)] $\dist_{\EC}(\widetilde{\MP}(G)\setminus\T,\T)>0$, where
\begin{equation}\label{equ:P-G-tilde}
\widetilde{\MP}(G):=\overline{\bigcup_{n\geqslant 1,\,c\in\Crit(f)} G^n\big(\phi(c)\cup \phi(c)^*\big)}.
\end{equation}
\end{itemize}
Indeed, the property (iv) holds since by \eqref{equ:f-k-i} and \eqref{equ:phi-b-i}, we have the forward orbit
\begin{equation}
(\phi(b_i))^* \overset{G^{k_i}}{\longmapsto} \phi(c_1) \overset{G^{n_1}}{\longmapsto} \phi(b_1) \overset{G^{k_1}}{\longmapsto} (\phi(c_1))^*
\overset{G^{n_1}}{\longmapsto} (\phi(b_1))^* \overset{G^{k_1}}{\longmapsto} \phi(c_1),
\end{equation}
i.e., the forward orbits of $\phi(b_i)$ and $(\phi(b_i))^*$ fall into periodic cycles, where $1\leqslant  i\leqslant  m$.
The property (iv) guarantees that the domain in \eqref{equ:Omega-I} and the half hyperbolic neighborhoods in \eqref{equ:H-d-I} are well defined.
Denote
\begin{equation}\label{equ:P-G}
\MP(G):=\phi\big(\MP(f)\setminus\Delta\big).
\end{equation}
Then $\T\subset\MP(G)\subset\widetilde{\MP}(G)$ since $\partial\Delta\subset\MP(f)$.
Note that $\widetilde{\MP}(G)$ is the postcritical set of $G$. But in the following we also call $\MP(G)$ the \textit{postcritical set} of $G$ for convenience, since we are mainly interested in the dynamics of $G$ in $\EC\setminus\D$.

\medskip
Let $V_0$ be a Jordan disk in $\EC\setminus\overline{\D}$. We call $\{V_n\}_{n\geqslant 0}$ a \textit{pullback sequence} of $V_0$ if $V_{n+1}$ is a connected component of $G^{-1}(V_n)$ in $\EC\setminus\overline{\D}$  for all $n\geqslant 0$.
The following result implies the Main Lemma immediately.

\begin{mainlempri}
For any $\varepsilon>0$ and any Jordan disk $V_0\subset \EC\setminus\overline{\D}$ with $\overline{V}_0 \cap \MP(G) \neq \emptyset$ and $\overline{V}_0 \cap \MP(G) \subset \T$, there exists $N \geqslant 1$, such that for any pullback sequence $\{V_n\}_{n\geqslant 0}$ of $V_0$, we have $\diam_{\EC}(V_n)<\varepsilon$ for all $n\geqslant N$.
\end{mainlempri}

This lemma implies that a long iteration of the quasi-Blaschke model $G$ is expanding near the unit circle.
In the following we fix the model map $G$ and omit the dependence of all the constants on $G$.
The heart of this paper is the proof of the Main Lemma$'$, which occupies Section \ref{sec-basic-tools} to Section \ref{sec:key-thm}.

\medskip
The following result is useful when we consider the pullbacks of Jordan disks. For a proof, see \cite[Proposition 2.8]{Pil96}.

\begin{lem}\label{lem:take-pre}
If a Jordan disk $V\subset\EC\setminus\overline{\D}$ contains no critical values, or its closure contains at most one critical value,
then every component $U$ of $G^{-1}(V)$ is a Jordan disk. Moreover, $G:\overline{U}\to \overline{V}$ is a homeomorphism in the first case.
\end{lem}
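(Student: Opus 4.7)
The plan is to treat $G$ as a proper branched covering of the sphere and apply standard topological arguments. Since $V$ is a Jordan domain disjoint from $\overline{\D}$, each component $U$ of $G^{-1}(V)$ is an open subset of $\EC$, and the restriction $G|_U : U \to V$ is a proper branched covering of some finite degree $d \geq 1$.

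First I would establish two preliminary facts about $U$. By a connectedness argument, if $p \in \partial U$ were mapped into $V$, then the open set $G^{-1}(V)$ would contain a neighborhood of $p$, and the component containing points of $U$ near $p$ would be $U$ itself, forcing $p \in U$---a contradiction. Hence $G(\partial U) \subset \partial V$, and combined with properness this gives $G(\overline{U}) = \overline{V}$ and $G(\partial U) = \partial V$.

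In the first case, $V$ contains no critical values of $G$ in its interior. Then $G|_U : U \to V$ is a proper unbranched cover of the simply connected $V$, hence has degree one. So $G|_U$ is a homeomorphism and $U$ is simply connected. For the extension to closures, the continuous surjection $G : \overline{U} \to \overline{V}$ is injective on $U$ and surjective onto $\partial V$; local considerations at any critical point on $\partial U$ show the map remains injective there, since $U$ lies on only one side of $\partial V$ and only one local petal of the branched cover contributes. As a continuous bijection from a compact space to a Hausdorff space, $G : \overline{U} \to \overline{V}$ is then a homeomorphism, and in particular $\partial U$ is a Jordan curve.

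In the second case, $\overline{V}$ contains at most one critical value $v$. If $v \notin V$, the previous argument applies verbatim. Otherwise $v \in V$, and $G|_U : U \to V$ is a proper branched cover of degree $d$ ramified only over $v$. If $G^{-1}(v) \cap U$ consists of $k$ points with local degrees $e_1, \dots, e_k$ summing to $d$, the Riemann--Hurwitz formula for topological branched covers yields $\chi(U) = d \cdot \chi(V) - \sum (e_i - 1) = k$. Since $\chi(U) \leq 1$ for a connected open surface, this forces $k = 1$ and $\chi(U) = 1$, so $U$ is topologically a disk. Removing the single preimage of $v$ reduces $G|_U$ to an unbranched cover of the annulus $V \setminus \{v\}$, and the boundary extension argument of the first case then shows $\partial U$ is a Jordan curve. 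The main subtlety throughout is handling critical points lying on $\partial U$, but this is controlled by the one-sidedness of $U$ relative to $\partial V$ together with the fact that $G$ is a local branched covering away from $\T$.
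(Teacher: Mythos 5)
The paper does not prove this lemma at all: it simply cites \cite[Proposition 2.8]{Pil96}, so your argument should be judged on its own. Your overall route is the standard one and is essentially sound: properness of $G|_U$, the fact that $G(\partial U)\subset\partial V$, degree one in the unramified case, and Riemann--Hurwitz for topological branched covers (legitimate here since the quasi-regular $G$ is open and discrete, hence a branched covering by Sto\"ilow) giving $\chi(U)=k\le 1$, so $k=1$ and $U$ is simply connected when the critical value $v$ lies in $V$. The first case, including the boundary injectivity when a critical value sits on $\partial V$, is handled correctly by your ``only one petal can lie in $U$'' counting, since two disjoint local preimage pieces inside $U$ over a boundary point would force local degree at least $2$ over nearby points of $V$.

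The one step that does not work as written is the last sentence of the second case: when $v\in V$ and $d\ge 2$, ``the boundary extension argument of the first case'' is not available, because that argument rested on $G:\overline U\to\overline V$ being a continuous \emph{bijection}, which fails for $d\ge 2$. What you need instead (and what simple connectivity of $U$ does not give by itself) is an argument that $\partial U$ is a Jordan curve. The standard fix: since the unique critical value in $\overline V$ lies in $V$, there are no critical values on $\partial V$, so $G$ is a local homeomorphism at every point of $G^{-1}(\partial V)$ and $G:G^{-1}(\partial V)\to\partial V$ is a covering; hence $G^{-1}(\partial V)$ is a finite disjoint union of Jordan curves. Because $U$ is simply connected, $\partial U$ is connected (unicoherence of the sphere applied to $\overline U$ and $\EC\setminus U$), so $\partial U$ lies in a single such curve $\gamma$; the local one-sided analysis you used in the first case shows $\partial U$ is open in $\gamma$, and it is closed, so $\partial U=\gamma$. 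Then $U$, being connected, open, and disjoint from $\gamma$, is one of the two complementary Jordan domains of $\gamma$. With this substitution your proof is complete and matches the argument behind the cited reference.
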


Let $\R$ be the real axis. An orientation-preserving homeomorphism $\widetilde{g}:\mathbb{R}\rightarrow\mathbb{R}$ is called a \textit{quasi-symmetric map} if there exists $k>1$ such that
\begin{equation}\label{defi-quasi-symmetric}
k^{-1}\leqslant \frac{\widetilde{g}(x+t)-\widetilde{g}(x)}{\widetilde{g}(x)-\widetilde{g}(x-t)}\leqslant  k
\end{equation}
for all $x\in\mathbb{R}$ and all $t>0$. Let $g:\T\to\T$ be an orientation-preserving homeomorphism of the unit circle $\T$ and $\widetilde{g}:\mathbb{R}\rightarrow\mathbb{R}$ a lift of $g$ under the covering map $x\mapsto e^{2\pi \ii x}$. The map $g:\T\to\T$ is called \textit{quasi-symmetric} if \eqref{defi-quasi-symmetric} holds for $\widetilde{g}$.

\medskip

Let $\Delta$ be the fixed bounded type Siegel disk of $f$ with rotation number $\alpha\in(0,1)$. Since $\partial\Delta$ is a quasi-circle, there exists a quasiconformal mapping $\psi:\EC\to\EC$ with $\psi(\partial\Delta)=\T$ such that
\begin{equation}
\psi\circ f\circ \psi^{-1}(\zeta)=R_\alpha(\zeta):=e^{2\pi\ii\alpha}\zeta \text{\quad for all } \zeta\in\T.
\end{equation}
The restriction $G=\phi \circ f \circ \phi^{-1}:\T\to\T$ of the quasi-regular map defined in \eqref{equ:G} has rotation number $\alpha$. Therefore,
\begin{equation}
h:=\psi\circ\phi^{-1}:\EC\to\EC
\end{equation}
is a quasiconformal mapping satisfying
\begin{equation}
h\circ G\circ h^{-1}(\zeta)=R_\alpha(\zeta) \text{\quad for all ~~}\zeta\in\T.
\end{equation}
In particular, $h:\T\to\T$ is a quasi-symmetric map conjugating $G$ to the rigid rotation $R_\alpha$. Note that $h$ is unique up to a post-composite rigid rotation of $\T$.

\begin{defi}[Dynamical length]
For any arc $I \subset \T$, we use $|I|$ to denote the Euclidean length of $I$. Define
\begin{equation}\label{equ:dynam-length}
\sigma(I): = |h(I)|.
\end{equation}
We call $\sigma(I)$ the \textit{dynamical length} of $I$. Clearly, $\sigma(\cdot)$ is $G$-invariant in the sense that
$\sigma(G(I)) = \sigma(I)$.
\end{defi}

Since both $h:\T\to\T$ and $h^{-1}:\T\to\T$ are quasi-symmetric maps, the following result follows from \eqref{defi-quasi-symmetric} immediately.

\begin{lem}\label{lem:comparable}
We have $\sigma(I)\asymp \sigma(J)$ if and only if $|I|\asymp|J|$, where $I$ and $J$ are disjoint and adjacent subarcs of $\T$.
\end{lem}

In this paper, the anticlockwise direction of the unit circle is regarded as the positive direction. This induces an orientation for any subarc $I\subset\T$. If $I$ is a subarc of $\T$ with $\overline{I}=[a,b]\neq\T$, we say that $a$ is on the \textit{left} of $b$ (or, $b$ is on the \textit{right} of $a$). If $I=\T\setminus\{a\}$, we denote $I=(a^-,a^+)$ for convenience.
For any two different $a,b\in\T$, by definition we have $(a,b)\cup (b,a)\cup\{a,b\}=\T$.

\subsection{First return properties} \label{subsec:return}

We first give a brief account of the combinatorics of the closest returns and the associated dynamical partitions of the unit circle under the irrational rotation $R_\alpha(\zeta)=e^{2\pi\ii\alpha}\zeta$. One may refer to \cite[Appendix C]{Mil06} for the details.
For $n\geqslant 1$, let $p_n/q_n:=[0;a_1,\cdots,a_n]$ be the $n$-th approximation of the continued fraction expansion of $\alpha\in(0,1)$, where $p_n$ and $q_n$ are coprime positive integers. Denote $p_0=0$ and $q_0=1$. Note that $p_1=1$ and $q_1=a_1$. It is well known that for all $n\geqslant 2$,
\begin{equation}\label{equ:p-n-q-n}
p_n=a_n p_{n-1}+p_{n-2} \text{\quad and\quad} q_{n}=a_{n}q_{n-1}+q_{n-2}.
\end{equation}

Denote $\lambda=e^{2\pi\ii\alpha}$. We say that the sequence $\lambda^1$, $\lambda^2$, $\lambda^3$, $\cdots$ has a \textit{close return} to $\lambda^0=1$ at the time $q\geqslant 1$ if $\lambda^q$ is closer to 1 than any of its predecessors:
\begin{equation}
\big|\lambda^q-1\big|<\big|\lambda^k-1\big| \text { for } k=1,2,3, \cdots, q-1.
\end{equation}
Actually the collection of all close return times are exactly the $q_n$'s: $1=q_0<q_1<q_2<\cdots$.
Let $d_n$ be the Euclidean length of the shortest arc in $\T$ connecting $1$ with $\lambda^{q_n}$. Then
$2\pi\min\{\alpha,1-\alpha\}=d_0>d_1>d_2>\cdots>0$ and for all $n\geqslant 2$,
\begin{equation}\label{equ:d-n}
d_n=d_{n-2}-a_n d_{n-1} \text{\quad and \quad} a_n<\frac{d_{n-2}}{d_{n-1}}<a_n+1.
\end{equation}
If $0<\alpha<1/2$, then $\lambda^{q_1}$ lies in the open arc $(\lambda,1)\subset\T$ and $\lambda^{q_2}\in(1,\lambda)\subset\T$. Moreover, we have $\lambda^{q_n}\in(\lambda^{q_{n-2}},1)$ if $n$ is odd and $\lambda^{q_n}\in(1,\lambda^{q_{n-2}})$ if $n$ is even, where $n\geqslant 3$.
In the closed arc $[\lambda^{q_{n-2}},\lambda^{q_{n-1}}]\subset\T$ containing $1$, the points with the form $\{\lambda^k: 0\leqslant k\leqslant q_n\}$ are (listed by the positive direction on $\T$):
\begin{equation}
\lambda^{q_{n-2}},~\lambda^{q_{n-2}+q_{n-1}},~\lambda^{q_{n-2}+2q_{n-1}},~\cdots,~\lambda^{q_n}=\lambda^{q_{n-2}+a_n q_{n-1}},~1,~\lambda^{q_{n-1}}.
\end{equation}
If $1/2<\alpha<1$, then $\lambda^{q_1}\in(1,\lambda)$, $\lambda^{q_2}\in(\lambda,1)$ and the above statements are still true by permuting the parity of $n$.

\medskip
In the following, every subarc in $\T$ is assumed to be open and nonempty unless otherwise specified. Let $I\subset\T$ be an arc. For any $x \in I$, we denote
\begin{equation}
N(x,I):=\min\{n\geqslant 1: ~ G^n(x)\in I\}.
\end{equation}
Define
\begin{equation}
N(I) := \max_{x\in I} N(x, I)  \text{\quad and\quad} n(I) := \min_{x\in I} N(x, I).
\end{equation}

\begin{lem}\label{lem:r-1}
We have $N(I) \preccurlyeq n(I)$ for any subarc $I \subset \T$.
\end{lem}

\begin{proof}
We assume that $|I|$ is small since otherwise, the conclusion is obvious.
For any $x \in I = (a, b)$, let $n = N(x, I)\geqslant 1$ be the first return time of $x$.
Then there exists an integer $k  = k(x) \geqslant 2$ such that
\begin{itemize}
\item $(a, x) \subset \big(G^{q_{k-1}}(x), x\big)$ and $(x, b) \subset \big(x, G^{q_{k-2}}(x)\big)$;
\end{itemize}
and moreover,
\begin{itemize}
\item $n = q_{k-1} + j q_k$ with $1 \leqslant j \leqslant a_{k+1}$ if $G^n(x) \in (a,x)$; or
\item $n = q_{k-2} + j q_{k-1}$ with $1 \leqslant j \leqslant a_k$ if $G^n(x) \in (x,b)$.
\end{itemize}
Hence we have either $G^{q_{k+1}}(x)\in (a,x)\subset I$ or $G^{q_k}(x)\in (x,b)\subset I$.

Let $\sigma(I)$ be the dynamical length of $I\subset\T$ defined in \eqref{equ:dynam-length}.
Since $\alpha$ is of bounded type, by \eqref{equ:d-n} we have $\sigma(I)\asymp |(1 , R_{\alpha}^{q_k}(1))|$.
Hence
\begin{equation}
|(1 , R_{\alpha}^{q_{k(x)}}(1))|\asymp |(1 , R_{\alpha}^{q_{k(y)}}(1))| \text{\quad for any } x,y\in I.
\end{equation}
Therefore, there exists an integer $\ell_0=\ell_0(\alpha) \geqslant 1$ such that $|k(x) - k(y)| \leqslant \ell_0$ for any $x ,y \in I$.
This, together with \eqref{equ:p-n-q-n} and the fact that $\alpha$ is of bounded type, implies the lemma.
\end{proof}

\begin{lem}\label{lem:r-2}
We have $\sigma(I)\asymp \sigma(J)$ if and only if $N(I)\asymp N(J)$, where $I,J$ are subarcs of $\T$.
\end{lem}

\begin{proof}
We only prove the necessity since the sufficiency is completely similar.
From the proof of Lemma \ref{lem:r-1}, it follows that there exist two integers $k_1, k_2 \geqslant 2$
such that
\begin{equation}
\begin{split}
\sigma(I)\asymp |(1, R_\alpha^{q_{k_1}}(1))|, \quad & N(I)\asymp q_{k_1} \text{\quad and}\\
\sigma(J)\asymp |(1, R_\alpha^{q_{k_2}}(1))|, \quad & N(J)\asymp q_{k_2}.
\end{split}
\end{equation}
Thus if $\sigma(I)\asymp\sigma(J)$, then $|(1, R_\alpha^{q_{k_1}}(1))|\asymp|(1, R_\alpha^{q_{k_2}}(1))|$. By the fact that $\alpha$ is of bounded type, this is equivalent to that $q_{k_1}\asymp q_{k_2}$. Thus $N(I)\asymp N(J)$.
\end{proof}

\begin{lem}\label{lemma:fr}
There exists $\lambda_0>1$ depending only on $\alpha$ such that for any arc $I \subset \T$ and $x\in I$, if $G^{n_1}(x)$ and $G^{n_2}(x)$ are the first and the second returns of $x$ to $I$, i.e.,
\begin{equation}
n_1=N(x, I)  \text{\quad and\quad} n_2=n_1+N(G^{n_1}(x), I),
\end{equation}
then $\sigma(I)$ and $\sigma(J)$ are $\lambda_0$-comparable, where $J$ is the shortest arc between two points of $x$, $G^{n_1}(x)$ and $G^{n_2}(x)$ with respect to the dynamical length.
\end{lem}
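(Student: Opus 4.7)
The plan is to transport the statement via the quasi-symmetric conjugacy $h:\T\to\T$ to the rigid rotation $R_\alpha$ and then exploit the best-approximation arithmetic of the convergents of $\alpha$ under the bounded-type hypothesis.

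Set $\widetilde I:=h(I)$, $\widetilde x:=h(x)$, and $\widetilde y_i:=h(G^{n_i}(x))=R_\alpha^{n_i}(\widetilde x)$ for $i=1,2$, so that $\widetilde y_1,\widetilde y_2$ are the first two returns of $\widetilde x$ to $\widetilde I$ under $R_\alpha$ and $|\widetilde I|=\sigma(I)$. Because $h$ and $h^{-1}$ are quasi-symmetric, Lemma \ref{lem:comparable} lets me pass between Euclidean lengths on $\T$ and the dynamical length $\sigma(\cdot)$ up to constants depending only on $\alpha$. As all three points lie in $\widetilde I$, the shorter arc between any pair of them has Euclidean length at most $|\widetilde I|$, so the entire task reduces to bounding the minimum pairwise Euclidean distance in $\{\widetilde x,\widetilde y_1,\widetilde y_2\}$ from below by a constant times $|\widetilde I|$, with the constant depending only on $\alpha$.

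The key tool is a standard best-approximation inequality: for any $\widetilde x\in\T$ and any $M\ge 1$, the minimum pairwise Euclidean distance of the orbit $\{R_\alpha^j(\widetilde x):0\le j\le M\}$ equals $\beta_{k^*}$, where $q_{k^*}$ is the largest convergent denominator of $\alpha$ with $q_{k^*}\le M$ and $\beta_j:=|R_\alpha^{q_j}(1)-1|$. I apply this with $M=n_2$. By Lemma \ref{lem:r-1}, $n_2\le 2N(I)\le 2\widetilde C_0\,n(I)$, and as in the proof of Lemma \ref{lem:r-1} one can choose $k$ with $n(I)$ comparable to $q_k$ and $|\widetilde I|$ comparable to $\beta_k$. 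Since $\alpha$ is of bounded type, the partial quotients $a_j$ are uniformly bounded, so $q_{k+j}/q_k$ grows at most geometrically with a bounded ratio; this forces $k^*\le k+\ell_1$ for some $\ell_1=\ell_1(\alpha)$, and correspondingly $\beta_{k^*}\ge c\,\beta_k\ge c'|\widetilde I|$ with $c,c'>0$ depending only on $\alpha$.

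Combining the two bounds places each of the three pairwise shorter arcs in $[c'|\widetilde I|,|\widetilde I|]$, so each is comparable to $|\widetilde I|$ with constants depending only on $\alpha$; pulling back by $h^{-1}$ and invoking Lemma \ref{lem:comparable} once more converts this to $\lambda_0$-comparability of the corresponding dynamical lengths with $\sigma(I)$. The main obstacle, which I do not expect to be deep, is tracking carefully that every intermediate constant genuinely depends only on $\alpha$ (and not on $I$, $x$, or $k$); the bounded-type hypothesis is exactly what ensures this chain of comparisons closes, and no ingredient beyond the continued-fraction bookkeeping already present in Lemmas \ref{lem:r-1} and \ref{lem:r-2} is required.
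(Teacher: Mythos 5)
Your proposal is correct, but it takes a genuinely different route from the paper's. The paper never passes to the linearizing coordinate: it deduces directly from the proofs of Lemmas \ref{lem:r-1} and \ref{lem:r-2} that the arcs $(x,G^{n_1}(x))$ and $(G^{n_1}(x),G^{n_2}(x))$ have dynamical length comparable to $\sigma(I)$, and then handles the remaining pair $x$, $G^{n_2}(x)$ by a case analysis on the cyclic order, viewing $G^{n_2}(x)$ as the first return of $x$ to a slightly enlarged arc $J'=(x-\varepsilon,G^{n_2}(x)+\varepsilon)$ and using comparability of the return times $n_1$, $n_2-n_1$, $n_2$ together with Lemma \ref{lem:r-2}. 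You instead transport everything by $h$ to the rigid rotation and invoke the classical best-approximation fact that the minimal gap of $\{R_\alpha^j(\widetilde x):0\le j\le M\}$ is $\|q_{k^*}\alpha\|$ with $q_{k^*}\le M<q_{k^*+1}$, then compare $n_2$ with $q_k$ and $\beta_{k^*}$ with $\beta_k\asymp\sigma(I)$ using bounded type and the comparabilities $\sigma(I)\asymp|(1,R_\alpha^{q_k}(1))|$, $N(I)\asymp q_k$ extracted from Lemmas \ref{lem:r-1}--\ref{lem:r-2}. Your route bounds all three pairwise gaps at once, avoids the case analysis and the $\varepsilon$-limiting trick, and makes the dependence of $\lambda_0$ on $\alpha$ transparent; the paper's route stays inside the toolkit it has already set up and never needs the minimal-gap theorem explicitly. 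Both arguments ultimately rest on the same continued-fraction bookkeeping.

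Two wording slips, neither a real gap. First, Lemma \ref{lem:comparable} does not let you ``pass between Euclidean lengths on $\T$ and the dynamical length'': $h$ is only quasi-symmetric, so $|J|$ and $\sigma(J)$ are not comparable in general, and the lemma only compares adjacent arcs. You do not actually need it anywhere: since $\sigma(J):=|h(J)|$ by definition, the dynamical length of an arc between two of the three points is exactly the Euclidean length of the corresponding arc between $\widetilde x$, $\widetilde y_1$, $\widetilde y_2$, so both the reduction to the straightened picture and the return at the end are immediate, with no constants lost. Second, the inequality $k^*\le k+\ell_1$ follows from the universal lower growth $q_{j+2}\ge 2q_j$ (valid for every irrational), not from the boundedness of the partial quotients; bounded type is what you need for $\beta_{k+\ell_1}\ge c(\alpha)\,\beta_k$ and for the quoted comparabilities between $\sigma(I)$, $N(I)$ and the convergents. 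With these rephrasings your argument is complete.
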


\begin{proof}
By Lemmas \ref{lem:r-1} and \ref{lem:r-2},
the dynamical lengths of $(x, G^{n_1}(x))$ (or $(G^{n_1}(x),x)$) and $(G^{n_1}(x), G^{n_2}(x))$ (or $(G^{n_2}(x), G^{n_1}(x))$)
are comparable with $\sigma(I)$. Without loss of generality, we assume that $(x,G^{n_1}(x))\subset I$, i.e., $G^{n_1}(x)$ is on the right of $x$. There are the following two cases:

\medskip
\textbf{Case I}: Suppose $I':=(x,G^{n_2}(x))\subset I$. If $G^{n_1}(x) \in I'$, then
\begin{equation}
\sigma\big((x, G^{n_1}(x))\big)\asymp \sigma\big((G^{n_1}(x),G^{n_2}(x))\big)\asymp\sigma(I).
\end{equation}
Otherwise, $G^{n_2}(x)\in(x,G^{n_1}(x))$.  Then for any sufficiently small $\varepsilon>0$, we have
\begin{equation}
n_2=N(x,I_\varepsilon'), \text{\quad where } I_\varepsilon':=(e^{-\ii\varepsilon}x, e^{\ii\varepsilon}G^{n_2}(x)).
\end{equation}
Note that $n_1 = N(x, I)$ and $n_2- n_1 = N(G^{n_1}(x), I)$. By Lemma \ref{lem:r-1}, we have $n_1\asymp n_2-n_1$ and hence $n_1\asymp n_2$. By Lemmas \ref{lem:r-1} and \ref{lem:r-2} and letting $\varepsilon\to 0$, we conclude that $\sigma(I)\asymp\sigma(I')$. In the first paragraph, we know that $\sigma(I)\asymp\sigma(I'')$, where $I''=\big(G^{n_2}(x),G^{n_1}(x)\big)$.
Therefore, $\sigma(I)\asymp \sigma(I')\asymp\sigma(I'')$.

\medskip
\textbf{Case II}: Suppose $I':=(G^{n_2}(x),x)\subset I$. Then one can define $I_\varepsilon':=(e^{-\ii\varepsilon}G^{n_2}(x)$, $e^{\ii\varepsilon}x)$ as in Case I and the rest of the argument is completely the same.
\end{proof}

\subsection{Half hyperbolic neighborhoods}\label{subsec:half-nbd}

We have assumed that the bounded type Siegel disk $\Delta$ is bounded in $\C$. Without loss of generality, in the following we assume further that $f(\infty)\in\Delta$ (hence $G(\infty)\in\D$). Then for any connected set $V_0$ in $\EC\setminus\overline{\D}$, every component of $G^{-1}(V_0)$ in $\EC\setminus\overline{\D}$ is a bounded set in $\C\setminus\overline{\D}$.
Let $\widetilde{\MP}(G)$ be defined in \eqref{equ:P-G-tilde}. There exists a constant $r_0>1$ such that
\begin{equation}\label{equ:r-pri}
(\widetilde{\MP}(G)\setminus\T)\cap\overline{\A}_{r_0}=\emptyset,
\end{equation}
where $\A_{r_0}=\{z\in\C:1/r_0< |z|<  r_0\}$. Let $I=(a,b) \subset \T$ be an open arc with $\overline{I}\neq\T$. Then $\widetilde{\MP}(G) \setminus I$ is a compact subset of $\EC$. We denote
\begin{equation}\label{equ:Omega-I}
\Omega_I:= \text{The component of } \EC\setminus (\widetilde{\MP}(G) \setminus I) \text{ containing } I.
\end{equation}
Then $\Omega_I$ is a domain which is symmetric about $\T$.

\begin{defi}[Half hyperbolic neighborhoods]
For any given $d>0$, let
\begin{equation}\label{equ:H-d-I}
H_d(I) := \big\{z \in \Omega_I \,:\, \dist_{\Omega_I}(z , I) < d \text{~and~} |z| > 1 \big\}
\end{equation}
be a \textit{half hyperbolic neighborhood} of $I$ in $\EC\setminus\overline{\D}$.
See Figure \ref{Fig_half-nbd}.
\end{defi}

\begin{figure}[!htpb]
  \setlength{\unitlength}{1mm}
  \centering
  \includegraphics[width=0.7\textwidth]{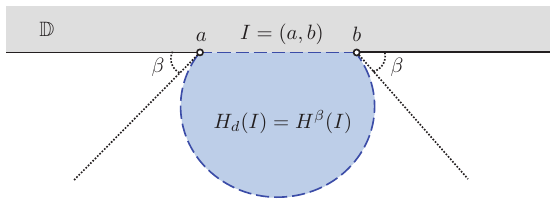}
  \caption{A half hyperbolic neighborhood $H_d(I)$ of the open interval $I=(a,b)$. }
  \label{Fig_half-nbd}
\end{figure}

Let $\Crit(f)$ be the set of all critical points of $f$ and we denote
\begin{equation}
\widetilde{\Crit}(f):=\{c\in\Crit(f):~\exists\, n\geqslant 0 \text{ such that } f^n(c)\in\partial\Delta\}.
\end{equation}
Note that $\Crit(f)\subset\EC\setminus\Delta$. Denote
\begin{equation}
\Crit(G):=\phi(\Crit(f)) \text{\quad and\quad} \widetilde{\Crit}(G):=\phi(\widetilde{\Crit}(f)).
\end{equation}
We denote
\begin{equation}\label{equ:crit-T}
\begin{split}
\MCV:=&~\{G(c):c\in\Crit(G)\cap\T\}, \text{ and}\\
\widetilde{\MCV}:=&~\left\{G^n(c)
\left|
\begin{array}{l}
c\in\widetilde{\Crit}(G)\text{ and } n\geqslant 1 \text{ is minimal} \\
\quad\qquad\text{such that } G^n(c)\in\T
\end{array}
\right.
\right\}.
\end{split}
\end{equation}
Then $\MCV\subset\widetilde{\MCV}\subset\T$ and $\widetilde{\MCV}$ is a finite set.

\medskip
Let $r_0>1$ be the constant introduced in \eqref{equ:r-pri}.
Then there exists $0<\widetilde{r}_0\leqslant  \min\{\frac{1}{4},r_0-1\}$ such that every subarc $I\subset \T$ satisfies:
\begin{equation}\label{equ:r0-pri}
\text{if } \min\{|I|,|\sigma(I)|\}< \widetilde{r}_0,  \text{\quad then } \sharp\big(\overline{I}\cap G^n(\widetilde{\MCV})\big)\leqslant  1 \text{ for every } n\geqslant 0.
\end{equation}

\begin{lem}\label{lemma:swz}
For any $d>0$, there exists $r_1=r_1(d)\in(0, \widetilde{r}_0]$ such that if $I\subset \T$ is an arc satisfying $\min\{|I|,|\sigma(I)|\}< r_1$, then $H_{\tilde{d}}(I)$ is a bounded Jordan disk in $\C\setminus\overline{\D}$ for all $\tilde{d}\in(0,d+1]$.
\end{lem}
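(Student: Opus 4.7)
The plan is to dominate $\Omega_I$ by the simply connected majorant $\hat{\Omega}_I := \EC \setminus (\T \setminus I)$ and then control the Euclidean size of $H_{\tilde{d}}(I)$ through a Koebe-type estimate in $\hat{\Omega}_I$. Since $\Omega_I \subset \hat{\Omega}_I$, monotonicity of the hyperbolic metric gives $\rho_{\hat{\Omega}_I} \le \rho_{\Omega_I}$, so $H_{\tilde{d}}(I)$ is contained in the analogous half-neighborhood $\hat{H}_{\tilde{d}}(I) \subset \hat{\Omega}_I \cap \{|z|>1\}$ computed using $\rho_{\hat{\Omega}_I}$. Because $\hat{\Omega}_I$ is a simply connected plane domain whose boundary $\T \setminus I$ lies at Euclidean distance of order $|I|$ from points of $I$, applying Koebe's distortion theorem to a normalized Riemann map $\D \to \hat{\Omega}_I$ yields a bound of the form $\diam_{\EC}(\hat{H}_{\tilde{d}}(I)) \le C(\tilde{d})\,|I|$ with $C(\tilde{d})$ depending only on $\tilde{d}$.

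Next I would choose $r_1 = r_1(d) \in (0, r_0']$. Since the quasi-symmetric homeomorphism $h:\T\to\T$ and its inverse are uniformly continuous on the compact circle, $\sigma(I) = |h(I)|$ being small forces $|I|$ to be small, so I can pick $r_1$ small enough that $\min\{|I|,\sigma(I)\} < r_1$ implies $C(2d)\,|I| < r'-1$, where $r'$ is the constant from \eqref{equ:r-pri}. With this choice, for every $0 < \tilde{d} \le 2d$ the half-neighborhood $H_{\tilde{d}}(I)$ lies inside the open Jordan half-disk
\[
U := \{z \in \C : \dist_{\EC}(z, I) < C(\tilde{d})|I|,\ |z|>1\} \subset \{1 < |z| < r'\},
\]
which by \eqref{equ:r-pri} is disjoint from $\widetilde{\MP}(G)\setminus\T$, so in particular $U \subset \Omega_I \cap (\C \setminus \overline{\D})$.

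To conclude that $H_{\tilde{d}}(I)$ is a simply connected domain, I would use the characterization that an open set $W \subset \EC$ is simply connected iff both $W$ and $\EC \setminus W$ are connected. Connectedness of $H_{\tilde{d}}(I)$ follows by writing it as $\bigcup_{p \in I}\bigl(B^{\Omega_I}_{\tilde{d}}(p) \cap \{|z|>1\}\bigr)$; each ball $B^{\Omega_I}_{\tilde{d}}(p)$ is a topological disk because the Euclidean size estimate prevents any $\Omega_I$-geodesic of length $< \tilde{d}$ starting from $p \in I$ from leaving $U \cup U^*$ and returning, and its symmetric intersection with $\{|z|>1\}$ is therefore a connected half-disk overlapping its neighbors along the connected arc $I$. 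For the complement, the strict Koebe inclusion $\overline{H_{\tilde{d}}(I)} \subset U$ combined with $\EC \setminus U$ being a closed topological disk on the sphere gives that
\[
\EC \setminus H_{\tilde{d}}(I) = (\EC \setminus U) \cup (U \setminus H_{\tilde{d}}(I))
\]
is connected, because every component of $U \setminus H_{\tilde{d}}(I)$ accumulates on $\partial U$ and thus glues to $\EC \setminus U$.

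The main obstacle I expect is making the Koebe constant $C(\tilde{d})$ genuinely uniform as $I$ slides around $\T$, which requires a careful normalization of the Riemann map $\D \to \hat{\Omega}_I$ tied to a base point on $I$; a secondary technicality is correctly translating the hypothesis $\min\{|I|,\sigma(I)\} < r_1$ into usable smallness of $|I|$, using only the uniform modulus of continuity of $h^{-1}$ on the compact circle (no explicit quantitative rate). The enlarged threshold $C(2d)$ rather than $C(d)$ in the choice of $r_1$ is what makes the conclusion hold simultaneously for all $0 < \tilde{d} \le 2d$.
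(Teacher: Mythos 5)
Your main line coincides with the paper's: majorize $\Omega_I$ by the slit sphere $\hat{\Omega}_I=\EC\setminus(\T\setminus I)$ (the paper's $\widetilde{\Omega}_I$), bound the Euclidean size of the corresponding $\tilde d$-neighborhood by $C(\tilde d)|I|$ uniformly for $\tilde d\le 2d$, convert smallness of $\sigma(I)$ into smallness of $|I|$ via $h^{-1}$, and choose $r_1$ so that the neighborhood lies in the annulus $\{1/r'<|z|<r'\}$, which by \eqref{equ:r-pri} misses $\widetilde{\MP}(G)\setminus\T$; monotonicity of the hyperbolic metric then gives $H_{\tilde d}(I)\subset\widetilde{H}_{\tilde d}(I)\setminus\overline{\D}$. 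The only difference in this part is how the size bound is obtained: the paper does not need Koebe, it quotes the classical explicit description of $\widetilde{H}_{\tilde d}(I)$ as the lens bounded by two circular arcs meeting $\T\setminus I$ at an angle depending only on $\tilde d$ (cited from \cite{MS93}), which gives the diameter bound immediately and is also what is used later to define the angle $\beta(d)$. Your Koebe route can be made uniform (all the domains $\hat{\Omega}_I$ are M\"obius-equivalent), but it is extra work for the same estimate, and it forces you to be careful at the two boundary points of $I$ where the neighborhood touches $\partial\hat{\Omega}_I$.

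The genuine gap is in your simple-connectivity argument. The assertion that ``every component of $U\setminus H_{\tilde d}(I)$ accumulates on $\partial U$'' is precisely the statement that $H_{\tilde d}(I)$ has no holes, i.e.\ the thing to be proved, so the complement-connectedness step is circular as written (also, $\overline{H_{\tilde d}(I)}\subset U$ cannot hold literally, since the closure contains $\overline{I}\subset\T$). Likewise, ``each ball $B^{\Omega_I}_{\tilde d}(p)$ is a topological disk'' is not automatic: hyperbolic balls in a multiply connected domain can fail to be simply connected once the radius exceeds the injectivity radius, and the remark that geodesics cannot leave $U\cup U^*$ and return is not a proof. What actually makes the topology work is a fact you have within reach but never isolate: since $\widetilde{H}_{2d}(I)$ lies in the annulus of \eqref{equ:r-pri} and avoids $\T\setminus I$, it contains no boundary point of $\Omega_I$ at all, so the whole symmetric lens $\widetilde{H}_{2d}(I)$ is a simply connected subset of $\Omega_I$ containing $\{z:\dist_{\Omega_I}(z,I)<\tilde d\}$ for every $\tilde d\le 2d$; moreover any path from $z$ to $I$ of $\rho_{\Omega_I}$-length $<\tilde d$ stays in that sub-level set (each of its points has remaining length $<\tilde d$ to $I$), and by the reflection symmetry of $\Omega_I$ about $\T$ such a path can be kept in $\{|z|\ge 1\}$, which gives connectedness of $H_{\tilde d}(I)$; ruling out holes then uses the position of the sub-level set inside the simply connected lens together with the comparison of $\rho_{\Omega_I}$ and $\rho_{\widetilde{\Omega}_I}$ there (this is exactly the sandwiching $H_d(I)\subset\widetilde{H}_d(I)\setminus\overline{\D}\subset H_{d'}(I)$ proved in Lemma \ref{lemma:swz-2} via explicit conformal maps), not the Euclidean estimate on $U$. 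In short: the containment of the full lens in $\Omega_I$ and the metric comparison inside it are the missing ingredients; without them your topological conclusion does not follow from what you have established.
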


\begin{proof}
Let $d>0$ and $I\subset\T$ be an arc with $\overline{I}\neq\T$. Consider the hyperbolic domain
\begin{equation}\label{equ:Omega-I-pri}
\widetilde{\Omega}_I:=\EC\setminus(\T\setminus I)
\end{equation}
and denote
\begin{equation}\label{equ:H-widetilde}
\widetilde{H}_d(I): = \big\{z \in \widetilde{\Omega}_I \,:\, \dist_{\widetilde{\Omega}_I}(z , I) < d\big\}.
\end{equation}
Then the boundary $\partial\widetilde{H}_d(I)$ consists of two subarcs of Euclidean circles which are symmetric to $\T$ (see \cite[Chap.\,VI, Section 5]{MS93} and Figure \ref{Fig-Omega-I-hat}).
Let $\beta\in(0,\pi)$ be the angle between $\partial \widetilde{H}_d(I)$ and $\T\setminus I$. Then we can write
\begin{equation}\label{equ:beta-d}
\widetilde{H}^{\beta}(I)=\widetilde{H}_d(I), \text{\quad where\quad} \log\cot\big(\tfrac{\beta}{4}\big)=d.
\end{equation}
Thus there exists a small $r_1=r_1(d)>0$ such that if $|I|< r_1$ or $|\sigma(I)|< r_1$, then the Euclidean diameter of $\widetilde{H}_{d+1}(I)$ is small and
\begin{equation}\label{equ:H-2d}
\widetilde{H}_{d+1}(I)\subset\A_{r_0}\setminus(\T\setminus I),
\end{equation}
where $r_0>1$ is the number in \eqref{equ:r-pri}.
Since $\Omega_I\subset \widetilde{\Omega}_I$, we have
\begin{equation}\label{equ:H-tilde-d}
H_{\tilde{d}}(I)\subset \widetilde{H}_{\tilde{d}}(I)\setminus\overline{\D} \text{\quad for all } \tilde{d}\in(0,d+1].
\end{equation}

If $\Omega_I=\widetilde{\Omega}_I$, then the lemma holds immediately. Without loss of generality, we assume that $\Omega_I\subsetneq\widetilde{\Omega}_I$.
Note that $\Omega_I$ is a hyperbolic domain which is symmetric about the unit circle. The arc $I\subset\T$ is a geodesic in $\Omega_I$ with respect to the hyperbolic metric $\rho_{\Omega_I}(z)|dz|$. Consider the holomorphic universal covering $\pi:\D\to\Omega_I$. The preimages of $I$ under $\pi$ consist of countably many pairwise disjoint Euclidean arcs $\{L_i:i\in\N\}$ in $\D$ which are orthogonal to the unit circle. The $(d+1)$-neighborhood
\begin{equation}
S_{d+1}(L_i):=\{w\in\D: \dist_{\D}(w,L_i)<d+1\}
\end{equation}
of each $L_i$ with respect to the hyperbolic metric in $\D$ is a simply connected domain.
By \eqref{equ:H-2d} and \eqref{equ:H-tilde-d}, for any different $i$ and $j$, $S_{d+1}(L_i)$ and $S_{d+1}(L_j)$ lie in their respective fundamental domains and $\overline{S_{d+1}(L_i)}\cap \overline{S_{d+1}(L_j)}=\emptyset$. This implies that the restriction of $\pi$ in an open neighborhood of $\overline{S_{d+1}(L_i)}\cap\D$ in $\D$ is conformal and $\pi(S_{\tilde{d}}(L_i))=H_{\tilde{d}}(I)\cup (H_{\tilde{d}}(I))^*$ is a bounded Jordan disk for all $\tilde{d}\in(0,d+1]$. In particular, $H_{\tilde{d}}(I)$ is a bounded Jordan disk in $\C\setminus\overline{\D}$ for all $\tilde{d}\in(0,d+1]$.
\end{proof}

\begin{lem}\label{lemma:swz-2}
For any $d>0$, there exist $C_0'=C_0'(d)>0$ and $r_2=r_2(d)\in (0, r_1]$ such that if $I\subset \T$ is an arc satisfying $\min\{|I|,|\sigma(I)|\}< r_2$, then
\begin{enumerate}
\item $H_d(I)\subset\widetilde{H}_d(I)\setminus\overline{\D}\subset H_{d'}(I)$, where $d'=d+C_0'|I|<d+1$;
\item If $I\cap \big(\bigcup_{k=0}^{n-1}G^k(\MCV)\big)=\emptyset$ for some $n\geqslant 1$, let $J \subset \T$ be the arc satisfying $G^n(J) = I$ and $Q \subset \EC\setminus\overline{\D}$ be the component of $G^{-n}(H_d(I))$ satisfying $\partial Q \cap \T = J$. Then $Q \subset H_{d'}(J)$.
\end{enumerate}
\end{lem}

\begin{proof}
(a) Given $d>0$. By Lemma \ref{lemma:swz}, we first assume that $\min\{|I|,|\sigma(I)|\}< r_1$ such that $H_{\tilde{d}}(I)$ is a Jordan disk for all $\tilde{d}\in(0,d+1]$. Without loss of generality, we assume that $I=\{e^{\ii\theta}:|\theta|<\tfrac{|I|}{2}\}$. Denote $\BH_+=\{w\in\C:\re w>0\}$ and let $\widetilde{\Omega}_I=\EC\setminus(\T\setminus I)$ be defined in \eqref{equ:Omega-I-pri}.
Let
\begin{equation}\label{equ:varphi}
\varphi=\varphi_3\circ\varphi_2\circ\varphi_1:\widetilde{\Omega}_I\to\BH_+
\end{equation}
be the composition of the following three conformal maps (see Figure \ref{Fig-Omega-I-hat}), where
\begin{equation}
\begin{split}
\zeta=\varphi_1(z)=\frac{\ii}{\tan(\frac{|I|}{4})}\,\frac{1-z}{1+z}: ~&~\widetilde{\Omega}_I\to\C\setminus\big(\R\setminus(-1,1)\big), \\
\xi=\varphi_2(\zeta)=\frac{1+\zeta}{1-\zeta}: ~&~\C\setminus\big(\R\setminus(-1,1)\big)\to\C\setminus(-\infty,0], \text{ and}\\
w=\varphi_3(\xi)=\sqrt{\xi}: ~&~ \C\setminus(-\infty,0]\to \BH_+.
\end{split}
\end{equation}
Then we have $\varphi(I)=\varphi_3\circ\varphi_2\big((-1,1)\big)=\varphi_3(\R_+)=\R_+$, where $\R_+:=(0,+\infty)$.
Let $\widetilde{H}_{d}(I)$ be defined in \eqref{equ:H-widetilde}. A direct calculation shows that
\begin{equation}
\begin{split}
\varphi(\widetilde{H}_d(I))
=&~\big\{w \in \BH_+ \,:\, \dist_{\BH_+}(w , \R_+) < d\big\} \\
=&~\big\{r e^{\ii\theta}:r>0 \text{ and } |\theta|<\tfrac{\pi}{2}-\tfrac{\beta}{2}\big\},
\end{split}
\end{equation}
where $\beta\in(0,\pi)$ satisfies $d=\log\cot(\tfrac{\beta}{4})$. See \eqref{equ:beta-d}.

\medskip
Let $r_0>1$ be the number introduced in \eqref{equ:r-pri}. Define
\begin{equation}
\widehat{\Omega}_I:=\A(r_0)\setminus\big((\T\setminus I)\cup (-r_0,-1/r_0)\big).
\end{equation}
Let $\widetilde{\MP}(G)$ be the set defined in \eqref{equ:P-G-tilde}. By \eqref{equ:H-2d}, $\widehat{\Omega}_I$ is a simply connected domain satisfying
\begin{equation}\label{equ:Omega-several}
\widehat{\Omega}_I\cap(\widetilde{\MP}(G)\setminus\T)=\emptyset \text{\quad and\quad}
H_d(I)\subset\widetilde{H}_d(I)\subset\widehat{\Omega}_I\subset\Omega_I\subset\widetilde{\Omega}_I.
\end{equation}

\begin{figure}[!htpb]
  \setlength{\unitlength}{1mm}
  \centering
  \includegraphics[width=0.82\textwidth]{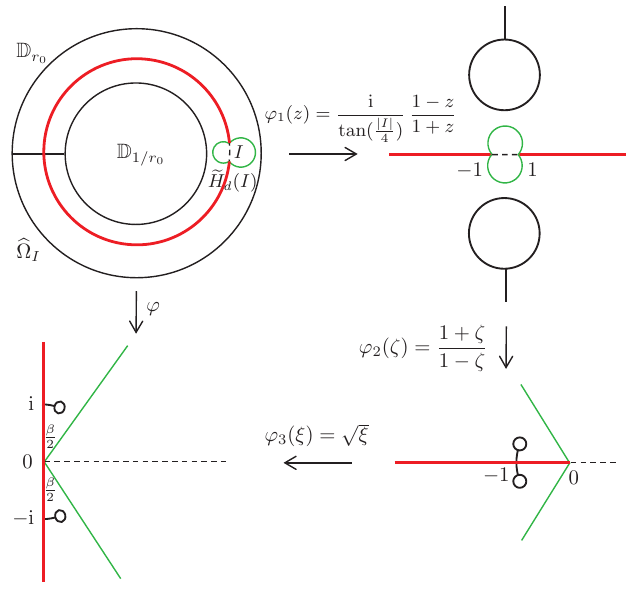}
  \caption{The hyperbolic domains $\widetilde{\Omega}_I$, $\widehat{\Omega}_I$ and their images under the conformal map $\varphi=\varphi_3\circ\varphi_2\circ\varphi_1$. }
  \label{Fig-Omega-I-hat}
\end{figure}

For a subset $Z$ of $\EC$, we denote $-Z:=\{-z:\,z\in Z\}$.
Let $\D_r(a)$ be the Euclidean disk centered at $a\in\C$ with radius $r>0$ and we denote $\D_r:=\D_r(0)$.
By a direct calculation, we have
\begin{equation}
\varphi_1(\widehat{\Omega}_I)=\C\setminus\Big(\big(\R\setminus(-1,1)\big)\cup \overline{D}\cup(-\overline{D})\cup L\cup (-L) \Big),
\end{equation}
where $D=\D_{\frac{R-r}{2}}\big(\frac{R+r}{2}\ii\big)$ and $L=[R,+\infty)\ii$ with
\begin{equation}
R=\frac{1}{\tan(\frac{|I|}{4})}\,\frac{r_0+1}{r_0-1} \text{\quad and\quad} r=\frac{1}{\tan(\frac{|I|}{4})}\,\frac{r_0-1}{r_0+1}.
\end{equation}
Therefore, there exist constants $C_1, C_2>0$ such that
\begin{equation}
\begin{split}
  \varphi_3\circ\varphi_2\big(\overline{D}\cup(-\overline{D})\cup L\cup (-L)\big)
  \subset &~\varphi_3\big(\D_{C_1|I|}(-1)\setminus\R\big) \\
  \subset &~\D_{C_2|I|}(\ii)\cup\D_{C_2|I|}(-\ii).
\end{split}
\end{equation}
Decreasing the length of $I$ if necessary, there exists $C_3>0$ such that
\begin{equation}\label{equ:appendix-varphi}
  \varphi(\widehat{\Omega}_I)\supset U_0\cup U_1 \cup U_\infty,
\end{equation}
where $U_0:=\D_{\frac{1}{2}}\cap\BH_+$, $U_\infty:=\BH_+\setminus\overline{\D}_{2}$ and
\begin{equation}
U_1:=\big\{r e^{\ii\theta}:r>0 \text{ and } |\theta|<\tfrac{\pi}{2}-C_3|I|\big\} \text{\quad with } C_3|I|<\tfrac{\beta}{4}.
\end{equation}

Note that $d=\log\cot(\frac{\beta}{4})$ with $\beta\in(0,\pi)$. By \eqref{equ:appendix-varphi}, if $|I|$ is small, then for every $z\in \partial\widetilde{H}_d(I)\setminus\partial I$ and $w=\varphi(z)$, we have
\begin{equation}
\dist_{\widehat{\Omega}_I}(z,I)\leqslant  \dist_{U_1}(w,\R_+)=\log\cot\big(\tfrac{\widehat{\beta}}{4}\big),
\end{equation}
where
\begin{equation}
\widehat{\beta}=2\left(\frac{\pi}{2}-\frac{\frac{\pi}{2}}{\frac{\pi}{2}-C_3|I|}\cdot\Big(\frac{\pi}{2}-\frac{\beta}{2}\Big)\right)
=\beta-\eta_0|I| \text{\quad with } \eta_0:=\frac{2C_3(\pi-\beta)}{\pi-2 C_3|I|}.
\end{equation}
If $|I|$ is small, then there exists $C_0'=C_0'(d)>0$ such that for all $z\in \widetilde{H}_d(I)$,
\begin{equation}\label{equ:app-compare}
\dist_{\widehat{\Omega}_I}(z,I)\leqslant  d+\log\frac{1+\tan\big(\frac{\eta_0|I|}{4}\big)\tan\big(\frac{\beta}{4}\big)}{1-\tan\big(\frac{\eta_0|I|}{4}\big)/\tan\big(\frac{\beta}{4}\big)}< d+C_0'|I|.
\end{equation}
Decreasing the length of $I$ if necessary, we assume that $C_0'|I|<1$ and hence
\begin{equation}
d':=d+C_0'|I|<d+1.
\end{equation}
Denote
\begin{equation}
\widehat{H}_{d'}(I): =\big\{z \in \widehat{\Omega}_I \,:\, \dist_{\widehat{\Omega}_I}(z , I) < d'\big\}.
\end{equation}
By \eqref{equ:Omega-several} and \eqref{equ:app-compare}, we have
\begin{equation}\label{equ:H-d-H-d-pri}
H_d(I)\subset \widetilde{H}_d(I)\setminus\overline{\D}\subset \widehat{H}_{d'}(I)\setminus\overline{\D}\subset H_{d'}(I).
\end{equation}

(b) Suppose $I\cap \big(\bigcup_{k=0}^{n-1}G^k(\MCV)\big)=\emptyset$ for some $n\geqslant 1$.
Let $\widehat{Q}$ be the connected component of $G^{-n}(\widehat{\Omega}_I)$ containing $J$, where $J\subset\T$ is the unique arc satisfying $G^n(J)=I$. We claim that $\widehat{Q}$ is simply connected and that $G^n:\widehat{Q}\to\widehat{\Omega}_I$ is a conformal map.

In fact, let $\widehat{Q}_1^+\subset\EC\setminus\overline{\D}$ be the unique component of $G^{-1}(\widehat{\Omega}_I\setminus\overline{\D})$ such that $J_1\subset \partial \widehat{Q}_1^+$, where $J_1:=G^{n-1}(J)$. Since $(\widehat{\Omega}_I\setminus\overline{\D})\cap\widetilde{\MP}(G)=\emptyset$ and $\widehat{\Omega}_I\setminus\overline{\D}$ is simply connected, it follows that $G:\widehat{Q}_1^+\to \widehat{\Omega}_I\setminus\overline{\D}$ is conformal. Let $\widehat{Q}_1^-\subset\D$ be the unique component of $G^{-1}(\widehat{\Omega}_I\cap\D)$ such that $J_1\subset \partial \widehat{Q}_1^-$. Then similarly, $G:\widehat{Q}_1^-\to \widehat{\Omega}_I\cap\D$ is also conformal. Let $\widehat{Q}_1$ be the component of $G^{-1}(\widehat{\Omega}_I)$ containing $J_1$. We have $\widehat{Q}_1\supset \widehat{Q}_1^+\cup \widehat{Q}_1^-\cup J_1$. By \eqref{equ:r0-pri}, we have $\sharp(\overline{I}\cap \widetilde{\MCV})\leqslant  1$. By the Riemann-Hurwitz formula, $\widehat{Q}_1$ is simply connected and contains at most one critical point (without counting multiplicity). If $c\in \widehat{Q}_1\setminus\T$ is a critical point, then its symmetric image $c^*=1/\overline{c}$ about $\T$ is also a critical point which is contained in $\widehat{Q}_1$. This is a contradiction. Since $I\cap \MCV=\emptyset$, it follows that $\widehat{Q}_1$ contains no critical point and $G:\widehat{Q}_1\to \widehat{\Omega}_I$ is a homeomorphism. In particular, we have $\widehat{Q}_1=\widehat{Q}_1^+\cup \widehat{Q}_1^-\cup J_1$ and hence $G:\widehat{Q}_1\to \widehat{\Omega}_I$ is conformal. By \eqref{equ:r0-pri}, $\sharp(I\cap G^k(\widetilde{\MCV}))\leqslant  1$ for every $0\leqslant  k\leqslant  n-1$.
Since $(\widehat{Q}_1\setminus\T)\cap\widetilde{\MP}(G)=\emptyset$, it follows that $G^n:\widehat{Q}\to \widehat{\Omega}_I$ is conformal by an inductive argument.

\medskip
Let $Q'$ be the component of $G^{-n}(\widehat{H}_{d'}(I))$ containing $J$. Since $\widehat{H}_{d'}(I)\subset \widehat{\Omega}_I$, we have $Q'\subset \widehat{Q}$. Note that $G^n:\widehat{Q}\to \widehat{\Omega}_I$ is conformal. By the Schwarz-Pick Lemma, we have
\begin{equation}
Q' = \{z \in \widehat{Q} \,:\, \dist_{\widehat{Q}}(z , J) < d'\}.
\end{equation}
By \eqref{equ:H-d-H-d-pri}, we have $H_d(I)\subset \widehat{H}_{d'}(I)\setminus\overline{\D}$. Then for the component $Q \subset \EC\setminus\overline{\D}$ of $G^{-n}(H_d(I))$ satisfying $\partial Q \cap \T = J$, we have $Q\subset Q'$.
Since $\widehat{Q}\subset \Omega_J$, where $\Omega_J$ is defined similarly as $\Omega_I$ in \eqref{equ:Omega-I}, it follows that $Q'\subset H_{d'}(J)$ and hence $Q \subset H_{d'}(J)$.
\end{proof}

Based on the proof of Lemma \ref{lemma:swz-2} (see also Lemma \ref{lemma:swz}), we show that the angle between $\partial H_d(I)\setminus I$ and $\T\setminus I$ is well-defined and equal to the angle between $\partial \widetilde{H}_d(I)$ and $\T\setminus I$. Indeed, we verify this by comparing the hyperbolic metrics of $\Omega_I$ with $\widetilde{\Omega}_I$ near the end points of $I$.
For $U_0=\D_{\frac{1}{2}}\cap\BH_+$, the following map is conformal:
\begin{equation}
\psi(w)=\frac{1}{\ii}\left(\frac{2w-\ii}{2w+\ii}\right)^2: U_0\to \BH_+.
\end{equation}
Note that $\rho_{\BH_+}(w)=\frac{1}{\re w}$. A direct calculation shows that for every $w=x+y\ii\in\D_{\frac{1}{4}}\cap\BH_+$, we have
\begin{equation}\label{equ:hyper-compare}
\begin{split}
&~\frac{\rho_{U_0}(w)}{\rho_{\BH_+}(w)}
=\frac{\re w\,|\psi'(w)|}{\re \psi(w)}=\frac{4x\cdot\frac{|2w-\ii|}{|2w+\ii|^3}}{\re\big(\frac{2w-\ii}{2w+\ii}\big)\,\im\big(\frac{2w-\ii}{2w+\ii}\big)} \\
=&~\left(1+\frac{x^2}{\frac{1}{4}-y^2-x^2}\right) \left(1+\frac{x^2}{(\frac{1}{2}+y)^2}\right)^{\frac{1}{2}} \left(1+\frac{x^2}{(\frac{1}{2}-y)^2}\right)^{\frac{1}{2}} = 1+\MO(x^2).
\end{split}
\end{equation}
For $U_\infty=\BH_+\setminus\overline{\D}_2$ and $w\in\BH_+\setminus\overline{\D}_4$, we have
\begin{equation}
\frac{\rho_{U_\infty}(w)}{\rho_{\BH_+}(w)}=\frac{\rho_{U_0}(\frac{1}{w})}{\rho_{\BH_+}(\frac{1}{w})}=1+\MO\big(\big(\re\tfrac{1}{w}\big)^2\big).
\end{equation}
By \eqref{equ:varphi}, there exists $C_1\in(0,1]$ such that for any $z\in\widetilde{\Omega}_I$ satisfying $\dist_{\C}(z,\partial I)=\kappa|I|$ with $\kappa\in(0, C_1]$, then $z\in\widehat{\Omega}_I\subset\Omega_I\subset\widetilde{\Omega}_I$ and $w=\varphi(z)\in \D_{\frac{1}{4}}\cap\BH_+$ or $w\in \BH_+\setminus\overline{\D}_4$.
By \eqref{equ:appendix-varphi}, the hyperbolic metrics satisfy
\begin{equation}\label{equ:rho-compare}
1\leqslant  \frac{\rho_{\Omega_I}(z)}{\rho_{\widetilde{\Omega}_I}(z)}\leqslant  \frac{\rho_{\widehat{\Omega}_I}(z)}{\rho_{\widetilde{\Omega}_I}(z)}
=\frac{\rho_{\varphi(\widehat{\Omega}_I)}(w)}{\rho_{\BH_+}(w)} \leqslant  1+\MO(\kappa).
\end{equation}
This implies that the angle $\beta=\beta(d)\in (0,\pi)$ between $\partial H_d(I)\setminus I$ and $\T\setminus I$ is well-defined and equal to the angle between $\partial \widetilde{H}_d(I)$ and $\T\setminus I$. Hence we can write (compare \eqref{equ:beta-d} and see Figure \ref{Fig_half-nbd})
\begin{equation}\label{equ:H-beta}
H^{\beta}(I)=H_d(I), \text{\quad where\quad} \log\cot\big(\tfrac{\beta}{4}\big)=d.
\end{equation}

For two different points $z_1,z_2\in\C$, we use $\lfloor z_1,z_2 \rfloor$ to denote the segment in $\C$ connecting $z_1$ with $z_2$ (This notation is used to distinguish from arcs on the circle).
For $0<\omega<1$ and an arc $I\subset\T$, we denote
\begin{equation}
S_{\omega}(I):=\big\{z\in\C:1<|z|<1+\omega|I| \text{ and } \lfloor 0,z \rfloor\cap I\neq\emptyset\big\}.
\end{equation}

\begin{lem}\label{lemma:hn}
For any given $n\geqslant 1$, there exist $\widetilde{r}_2=\widetilde{r}_2(n)>0$, $0<\omega=\omega(n)<1$ and $0<\beta_0=\beta_0(n)<\pi$ such that for any $n-1$ points $x_1 <x_2< \cdots < x_{n-1}$ in $I= (a, b) =(x_0,x_n)\subset \T$ with $|I|< \widetilde{r}_2$, the following two statements hold:
\begin{itemize}
\item $H^{\beta_0}\big((x_i, x_{i+1})\big)$ is a Jordan disk for all $0\leqslant  i\leqslant  n-1$; and
\item $\overline{S_{\omega}(I)}\setminus\T\subset \bigcup_{i=0}^{n-1}H^{\beta_0}\big((x_i, x_{i+1})\big)$.
\end{itemize}
\end{lem}

This lemma implies that $\widetilde{r}_2$, $\omega$ and $\beta_0$ depend on the number (i.e., $n+1$) of the points $x_0$, $x_1$, $\cdots$, $x_n$ but not on their position.

\begin{proof}
For any $\beta\in (0,\pi)$, by \eqref{equ:beta-d} and \eqref{equ:H-beta}, we write $H_d(I)=H^\beta(I)$ and $\widetilde{H}_d(I)=\widetilde{H}^\beta(I)$ for the subarc $I\subset\T$ with small length, where
\begin{equation}
d=\log\cot\big(\tfrac{\beta}{4}\big).
\end{equation}
For given $\beta_0\in (0,\pi)$, by Lemmas \ref{lemma:swz} and \ref{lemma:swz-2}(a), there exists a small $r_2=r_2(\beta_0)>0$ such that if $|I|< r_2$, then
\begin{itemize}
\item $\widetilde{H}^\beta(I)$ and $H^{\beta}(I)$ are Jordan disks, for any $\beta\in[\beta_0,\pi)$;
\item $\widetilde{H}_d(I)$ and $H_d(I)$ are Jordan disks, for any $d\in(0,d_0+1]$, where $d_0=\log\cot\big(\frac{\beta_0}{4}\big)$;
\item $\widetilde{H}_d(I)\setminus\overline{\D}\subset H_{d'}(I)$, where $d\in(0,d_0]$ and $d'=d+C_0'|I|<d+1$.
\end{itemize}
Therefore, $\widetilde{H}^\beta(I)\setminus\overline{\D}\subset H^{\beta'}(I)$, where $\beta'\in(0,\pi)$ satisfies
$\log\cot\big(\tfrac{\beta'}{4}\big)=d'$.
Without loss of generality we assume that $r_2>0$ is small such that for any $d\in(0,d_0]$ and any arc $I\subset\T$ with $|I|<r_2$,
\begin{equation}
4\arctan \big(e^{-(d+C_0'|I|)}\big)> 3\arctan\big(e^{-d}\big).
\end{equation}
This implies that $\beta'> \frac{3}{4}\beta$. Therefore, we have
\begin{equation}\label{equ:compare}
\widetilde{H}^\beta(I)\setminus\overline{\D}\subset H^{\beta'}(I) \subset H^{\frac{3\beta}{4}}(I).
\end{equation}

If $n=1$, we take $\beta_1=\frac{\pi}{8}$. There exist small $r_1'>0$ and $0<\omega_1<1$ such that if $|I|<r_1'$, then $H^{\beta_1}(I)$ is a Jordan disk and
\begin{equation}
\overline{S_{\omega_1}(I)}\setminus\T\subset \widetilde{H}^{\frac{4\beta_1}{3}}(I)\setminus\overline{\D}\subset H^{\beta_1}(I).
\end{equation}
Inductively, we assume that for every $1\leqslant k\leqslant n-1$, where $n\geqslant 2$, there exist $r_k'>0$, $0<\omega_k<1$ and $0<\beta_k<\pi$ such that for any $k-1$ points $x_1' <x_2'< \cdots < x_{k-1}'$ in $I= (a, b) =(x_0',x_k')\subset \T$ with $|I|< r_k'$, the following two statements hold:
\begin{itemize}
\item $H^{\beta_k}\big((x_i', x_{i+1}')\big)$ is a Jordan disk for all $0\leqslant  i\leqslant  k-1$; and
\item $\overline{S_{\omega_k}(I)}\setminus\T\subset \bigcup_{i=0}^{k-1}H^{\beta_k}\big((x_i', x_{i+1}')\big)$.
\end{itemize}

Note that any $n-1$ points $x_1 <x_2< \cdots < x_{n-1}$ in $I= (a, b) =(x_0,x_n)\subset \T$ divides $I$ into $n$ subarcs and there exists one subarc $I_j=(x_j,x_{j+1})$ with $0\leqslant j\leqslant n-1$ satisfying $|I_j|\geqslant|I|/n$. Hence there exist positive numbers
\begin{equation}
r_n'\leqslant\min_{1\leqslant k\leqslant n-1}\{r_k'\}, \quad
\omega_n\leqslant\min_{1\leqslant k\leqslant n-1}\{\omega_k\} \text{\quad and\quad}
\beta_n\leqslant\min_{1\leqslant k\leqslant n-1}\{\beta_k\}
\end{equation}
depending on $n$
such that for any $n-1$ points $x_1 <x_2< \cdots < x_{n-1}$ in $I$ with $|I|< r_n'$, then
\begin{itemize}
\item $H^{\beta_n}\big((x_i, x_{i+1})\big)$ is a Jordan disk for all $0\leqslant  i\leqslant  n-1$; and
\item $\overline{S_{\omega_n}(I)}\setminus\T=\overline{S_{\omega_n}\big((x_0,x_j)\big)\cup S_{\omega_n}\big(I_j\big)\cup S_{\omega_n}\big((x_{j+1},x_n)\big)}\setminus\T$ is contained in $\bigcup_{i=0}^{j-1}H^{\beta_j}\big((x_i, x_{i+1})\big)\cup H^{\beta_n}\big(I_j\big)\cup\bigcup_{i=j+1}^{n-1}H^{\beta_{n-j-1}}\big((x_i, x_{i+1})\big)$ and is also contained in $\bigcup_{i=0}^{n-1}H^{\beta_n}\big((x_i, x_{i+1})\big)$, where $I_j=(x_j,x_{j+1})$ is a subarc satisfying $|I_j|\geqslant|I|/n$.
\end{itemize}
The proof is complete by setting $\widetilde{r}_2:=r_n'$, $\omega:=\omega_n$ and $\beta_0:=\beta_n$.
\end{proof}

Lemma \ref{lemma:hn} will be used in the proof of Proposition \ref{prop:three special regions} and the corresponding number $n\geqslant 1$ there is an integer determined by $G$.

\subsection{Contraction of $G^{-1}$}\label{subsec:G-1}

Let $z_0,z_1,z_2\in\C\setminus\D$ be three different points. In this paper we assume that the angle $\angle\, z_1z_0z_2$ is measured in the \textit{logarithmic plane} of $G$. Specifically, when we write $\beta=\angle\, z_1z_0z_2\in[0,2\pi)$, it means that there exists $x\in\R$ such that the half-strip $\{w\in\C:x<\re w<x+1 \text{ and }\im w\leqslant  0\}$ contains $z_0', z_1', z_2'$ with $z_i=\Exp(z_i')$, where $\Exp(z)=e^{2\pi\ii z}$ for $1\leqslant  i\leqslant  3$ and
\begin{equation}\label{equ:angle}
\beta=\arg\left(\frac{z_2'-z_0'}{z_1'-z_0'}\right)\in [0,2\pi).
\end{equation}

Let $\MP(G)=\phi\big(\MP(f)\setminus\Delta\big)$ be the postcritical set defined in \eqref{equ:P-G}. Note that $\MP(G)\subset\widetilde{\MP}(G)$. By \eqref{equ:r-pri}, the following set is non-empty:
\begin{equation}\label{equ:Omega}
\Omega:=\text{The unique component of } \EC \setminus(\MP(G) \cup \overline{\D}) \text{ with }\T \subset \partial \Omega.
\end{equation}
In this subsection, we specify the places where $G^{-1}$ contracts the hyperbolic metric $\rho_{\Omega}(z)|dz|$ in $\Omega$ strictly.
By the definition of $H_d(I)$ in \eqref{equ:H-d-I}, we have $H_d(I)\subset\Omega$.
Let $\MCV$ and $\widetilde{r}_0>0$ be defined as in \eqref{equ:crit-T} and \eqref{equ:r0-pri} respectively.
The principle of the following result is observed essentially in \cite[Lemma 1.11]{Pet96}.

\begin{lem}\label{lemma:Pe}
Let $V$ be a Jordan disk in $\Omega$ such that $\overline{I}=\overline{V}\cap \T$, where $I$ is an arc with $\sigma(I)< \widetilde{r}_0$, $\partial I\cap\MCV=\{v\}$ and the \emph{angle} between $\partial V\setminus I$ and $\T\setminus I$ at $v$ is well-defined and positive. Then there exists $0<\mu=\mu(V)<1$ such that for every component $U$ of $G^{-1}(V)\cap\Omega$ with $\sharp(\overline{U}\cap\T)=1$ and for any $z\in U$, we have
\begin{equation}
\rho_\Omega(z)\leqslant  \mu\,\rho_\Omega(G(z))|G'(z)|.
\end{equation}
In particular, if $V=H_d(I)$ (resp. $V=H^\beta(I)$) is simply connected for some $d>0$ (resp. for some $0<\beta<\pi$), then $\mu$ depends only on $d$ (resp. $\beta$).
\end{lem}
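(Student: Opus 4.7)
My plan is to reduce the desired inequality to a scalar estimate on a continuous function on $\overline{V}$ and then exploit the critical-point structure at $c$ to extract strict contraction. First I identify the combinatorial picture. Since $G|_{\T}$ is a homeomorphism, local injectivity of $G$ at the unique point $c\in\overline{U}\cap\T$ would produce an arc of $\T$ ending at $c$ and mapping onto a one-sided neighbourhood of $v$ in $I$; this arc would lie in $\overline{U}\cap\T$, contradicting $\sharp(\overline{U}\cap\T)=1$. Hence $c$ must be a critical point of $G$ of some local degree $k\ge 2$ with $G(c)=v$. By Lemma \ref{lem:take-pre}, $U$ is a Jordan domain; and since the only preimage of $v$ in $\overline{U}$ is the boundary point $c$, the restriction $G\colon U\to V$ is a proper holomorphic map of degree one, hence a conformal isomorphism. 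Write $h:=G^{-1}\colon V\to U$.

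Using the conformal identity $\rho_U(z)=|G'(z)|\rho_V(G(z))$, the desired estimate is equivalent to
\[
\Phi(w):=\frac{\rho_\Omega(h(w))\,|h'(w)|}{\rho_\Omega(w)}\ \le\ \mu\quad\text{for all } w\in V.
\]
Direct Schwarz--Pick for $h\colon V\to\Omega$ yields only $\Phi<\rho_V/\rho_\Omega$, which is $\ge 1$; the strict contraction must therefore come from the geometric hypothesis at $v$. Regarding $\Phi$ as a continuous positive function on $V$, I would extend it to $\overline{V}$ via Carath\'eodory's theorem and then bound it on $\partial V$ in three regimes: (i) on $I\subset\partial V\cap\partial\Omega$, $\rho_\Omega(w)\to\infty$ while $\rho_\Omega(h(w))$ stays bounded (since $\overline{U}\cap\T=\{c\}$ forces $h(I)$ to lie off $\T$), so $\Phi\to 0$; (ii) at the tip $v$, the local model $G(z)-v=A(z-c)^k+O((z-c)^{k+1})$ together with the half-plane asymptotic $\rho_\Omega(\zeta)\sim 1/\dist(\zeta,\T)$ at a smooth point of $\T\subset\partial\Omega$ gives, for approach along $\arg(w-v)=\phi$,
\[
\lim_{w\to v,\ \arg(w-v)=\phi}\Phi(w)\ =\ \frac{\sin\phi}{k\sin(\phi/k)},
\]
which is strictly less than $1$ for $\phi\in(0,\pi)$ and $k\ge 2$ (elementary via Taylor expansion), and is monotone decreasing in $\phi$, so its supremum over $\phi\in[\beta,\pi]$ equals $\sin\beta/(k\sin(\beta/k))<1$; (iii) on $\partial V\setminus(\overline{I}\cup\{v\})$, both numerator and denominator of $\Phi$ stay bounded, and strict inequality $\Phi<1$ follows from Schwarz--Pick applied to a local extension of $h$ across this smooth Jordan arc (via Schwarz reflection when $\partial V\setminus I$ is real-analytic, as is the case for $V=H_d(I)$).

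Compactness and continuity of $\Phi$ on $\overline{V}$ then give $\mu:=\sup_{\overline{V}}\Phi<1$; in the canonical case $V=H_d(I)=H^\beta(I)$ the entire geometry depends only on $d$ (equivalently $\beta$), so $\mu=\mu(d)$. The main obstacle will be the sector asymptotic at $v$ in regime (ii): it is precisely the factor $k\ge 2$ coming from the local degree of $G$ at $c$, combined with the positive angle $\beta$ at $v$, that produces the strict bound $\mu<1$, and this is the quantitative form of Petersen's observation in \cite[Lemma 1.11]{Pet96}. A secondary technical point is extending $h$ past $\partial V\setminus I$ in regime (iii); for the canonical neighbourhoods $V=H_d(I)$ this is automatic by Schwarz reflection across the circular arc $\partial V\setminus I$, while for a general Jordan domain $V$ one needs a boundary-regularity/Koebe-distortion argument to upgrade the trivial bound $\Phi\le 1$ to strict inequality.
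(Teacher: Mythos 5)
Your reduction to the function $\Phi(w)=\rho_\Omega(h(w))|h'(w)|/\rho_\Omega(w)$ is fine, but the way you propose to bound it does not close. First, bounding $\Phi$ on $\partial V$ cannot control $\Phi$ in the interior of $V$: $\Phi$ is not continuous up to $\overline V$ (your own regime (ii) produces direction-dependent limits at $v$), and no maximum principle is available ($\log\Phi$ satisfies $\Delta\log\Phi=4\rho_\Omega^2(\Phi^2-1)$, so it is subharmonic only where $\Phi\ge1$; at best such an argument gives $\Phi\le1$ at interior maxima, never a uniform $\mu<1$). In the interior you have only the Schwarz--Pick bound $\Phi\le\rho_V/\rho_\Omega$, which you yourself note is useless, so the concluding sentence ``compactness and continuity of $\Phi$ on $\overline V$ give $\sup\Phi<1$'' assumes exactly what has to be proved. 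The mechanism the paper uses, and which is missing from your proposal, is a domain comparison: $U$ lies in the component $\Omega'$ of $\EC\setminus\bigl(\overline{\D}\cup G^{-1}(\MP(G)\cup\overline{\D})\bigr)$, $G:\Omega'\to\Omega$ is a holomorphic covering, so $\rho_\Omega(G(z))|G'(z)|=\rho_{\Omega'}(z)$ exactly, and the lemma reduces to showing $\rho_{\Omega'}(z)/\rho_\Omega(z)\ge 1/\mu'>1$ on $U$. Strictness then comes from the fact that near $c$ the set $U$ stays inside a cone at $c$ with opening angles in $(\beta',\pi-\beta')$ (so $U$ remains at bounded hyperbolic distance, in $\Omega$, from the extra boundary of $\Omega'$ attached at $c$), while the rest of $U$ is compactly contained in $\Omega$. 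That is an estimate on $U$ in the image plane, not a boundary asymptotic of $\Phi$ at the single point $v$.

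Second, your key regime (ii) computation assumes a holomorphic local model $G(z)-v=A(z-c)^k+O((z-c)^{k+1})$ at $c$. This is not available: $G$ is only quasi-regular near $c$, since the components of $G^{-1}(\D)$ attached to $c$ lie in the set $Q$ where $G$ fails to be analytic, so $c$ is on the boundary of the non-analyticity region and there is no power-series expansion there. Consequently the asymptotic $\sin\phi/(k\sin(\phi/k))$ is unjustified; the paper instead writes $G=g\circ\varphi$ near $c$ with $\varphi$ quasiconformal and $g$ holomorphic and uses only the bounded turning property of quasiconformal maps to obtain the cone condition $\angle\,a'cz\in(\beta',\pi-\beta')$, with $\beta'$ depending only on the angle of $\partial V$ at $v$ (hence only on $d$ or $\beta$ in the canonical case), and then the explicit sector-versus-half-plane density comparison gives $\mu'$. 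Smaller issues: your regime (i) needs a bound on $|h'|$ near $I$ which you do not supply (it is automatic once you use the covering identity above), and the uniformity of $\mu$ in $d$ is asserted rather than derived. To repair the proof you should replace the boundary-value strategy by the $\Omega'$ versus $\Omega$ comparison and replace the Taylor model at $c$ by the quasiconformal factorization and bounded turning.
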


\begin{proof}
Since $\sigma(I)< \widetilde{r}_0$, \eqref{equ:r0-pri} shows that $V\cap \widetilde{\MCV}=\emptyset$ and $\sharp(\partial V\cap \widetilde{\MCV})= 1$. By Lemma \ref{lem:take-pre}, every component $U$ of $G^{-1}(V)$ in $\EC\setminus\overline{\D}$ is a Jordan disk and $G:U\to V$ is conformal. Let $c\in\Crit(G)\cap\T$ be the unique critical point such that $G(c)=v$, where $G$ has local degree $2\ell-1$ at $c$ with $\ell\geqslant 2$. Without loss of generality, we assume that $I=(a,v)$, i.e., the critical value $v$ is the right end point of $I$. There exist $\ell$ components of $G^{-1}(V)$ in $\EC\setminus\overline{\D}$ which contain $c$ as their boundary point. We label them by $U_1$, $\cdots$, $U_\ell$ clockwise such that $\overline{U}_\ell\cap\T=[a',c]$ with $G(a')=a$ and $\overline{U}_i\cap\T=\{c\}$ for $1\leqslant  i\leqslant  \ell-1$. See Figure \ref{Fig_hyper-cone}.

\begin{figure}[!htpb]
 \setlength{\unitlength}{1mm}
 \centering
  \includegraphics[width=0.9\textwidth]{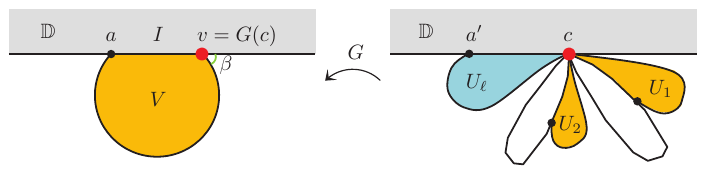}
  \caption{A sketch of the local mapping relation of $G$ near the critical point $c$ of local degree $2\ell-1$ with $\ell=3$. The strictly expanding regions $U_1$ and $U_2$ of $G$ with respect to the hyperbolic metric in $\Omega$ are colored yellow.}
 \label{Fig_hyper-cone}
\end{figure}

Let $U:=U_i$ for some $1\leqslant  i\leqslant  \ell-1$.
Since $G$ is a quasi-regular map satisfying $G(\T)=\T$, there exists a small Jordan disk $W$ containing $c$, such that $G$ can be written as $G=g\circ \varphi$, where
\begin{itemize}
\item $\varphi:W\to\varphi(W)$ is quasiconformal and $g:\varphi(W)\to G(W)$ is a proper holomorphic map having exactly one critical value $v$; and
\item $W\cap\T$ is an arc and $\varphi(W\cap\T)\subset\T$.
\end{itemize}
Note that the angle $\beta>0$ between $\partial V\setminus I$ and $\T\setminus I$ at $v$ is well-defined, i.e., as $z\in\partial V\setminus I$ and $z\to v$, the limit of $\angle\, avz$ exists and equals to $\pi-\beta$. Since the bounded turning property is preserved under quasiconformal mappings (see \cite[p.\,100]{LV73}), there exists a small number $0<\beta'<\pi/3$ depending only on $V$ (hence also on $\beta$), such that for all $z\in U\cap W$, we have
\begin{equation}\label{equ:angle-small}
\angle\, a'cz\in(\beta',\pi-\beta').
\end{equation}

Let $\Omega'$ be the component of $\EC\setminus \big(\overline{\D} \cup G^{-1}(\MP(G)\cup\overline{\D})\big)$ containing $U$. Then $\Omega' $ is a proper subset of $\Omega$ and $G:\Omega' \to \Omega$ is a holomorphic covering map (see Section \ref{subsec:quasi-Blaschke}).
By the Schwarz-Pick Lemma,
\begin{equation}
\rho_\Omega(G(z))|G'(z)|=\rho_{\Omega'}(z), \text{\quad for all }z\in\Omega'.
\end{equation}
Except for a small neighborhood of $c$, the remaining part of $U$ is compactly contained in $\Omega$. To prove this lemma, it suffices to prove that for all $z\in U\cap W$, we have
\begin{equation}
\frac{\rho_{\Omega'}(z)}{\rho_\Omega(z)}\geqslant \frac{1}{\mu'}>1,
\end{equation}
where $0<\mu'<1$ is a number depending only on $\beta'$.
Based on \eqref{equ:hyper-compare} and \eqref{equ:angle-small}, by changing coordinates, we may assume that
$\Omega=\{z\in\C:\re z>0\}$ and $\Omega'=\{z\in\C:|\arg z|<(\pi-\beta')/2\}$.
Then a direct calculation shows that one can take
\begin{equation}
\mu':=(\pi-\beta')/\pi\in(0,1).
\end{equation}

In particular, suppose $V=H_d(I)=H^\beta(I)$ for some $d>0$ and $0<\beta<\pi$. Then the angle between $\partial V\setminus I$ and $\T\setminus I$ at $v$ is $\beta$ and \eqref{equ:angle-small} holds for all $z\in U$ for a constant $\beta'>0$ depending only on $\beta$ (or $d$), but not depending on the length of $I$.  The proof is complete.
\end{proof}

Let $\Crit(G)\cap\T$ be the set of critical points of $G$ on $\T$. Counting without multiplicity, we denote
\begin{equation}\label{equ:crit-t0}
  t_0:=\sharp\,(\Crit(G)\cap\T)=\sharp\,\MCV\geqslant 1.
\end{equation}
For each $c_j=e^{\ii\theta_j}\in \Crit(G)\cap\T$ with $\theta_j\in\R$, where $1\leqslant  j\leqslant  t_0$, there exist finitely many connected components $W_{j,1}$, $\cdots$, $W_{j,\ell(j)}$ of $G^{-1}(\D)\setminus\overline{\D}$ attaching to $c_j$. Similar to the proof of \eqref{equ:angle-small}, we conclude that there exist two small numbers $\widehat{r}>0$ and $\widehat{\beta}\in(0,\frac{\pi}{2})$ such that
\begin{equation}\label{equ:hat-r-beta}
\begin{split}
&~\Big(\bigcup_{i=1}^{\ell(j)}W_{j,i}\Big)\cap\{z\in\C:|z-c_j|<\widehat{r}\} \\
\subset &~ \big\{z=c_j+r e^{\ii\theta}:0<r<\widehat{r}\text{ and }|\theta-\theta_j|<\tfrac{\pi}{2}-\widehat{\beta}\big\}.
\end{split}
\end{equation}

In the rest of this paper, we always assume that $d > 1$ is large (based on Lemma \ref{lemma:hn}, the number $d$ will be fixed after Proposition \ref{prop:three special regions}) such that
\begin{equation}\label{equ:d-choice}
H_d(I)=H^\beta(I) \text{\quad with\quad} 0<\beta=\beta(d)\leqslant \tfrac{\widehat{\beta}}{2},
\end{equation}
where $I$ is any subarc of $\T$ satisfying $|I|< r_2$ and $r_2=r_2(d)>0$ is the number introduced in Lemma \ref{lemma:swz-2}.

%----------------------------------------------------------------------------------------------------------------------------------------------------%
\section{Admissible sequences and contraction regions}\label{sec-pull-back}

For an arc $I \subset \T$ and a number $\kappa> 0$, we use $ \kappa I$ to denote the arc in $\T$ which has dynamical length $\kappa \sigma(I)$ and has the same middle point as $I$ (with respect to the dynamical length). Let $\MCV$ and $\widetilde{\MCV}$ be the finite subsets of $\T$ defined in \eqref{equ:crit-T}.
Let $I=(a,b)\subset\T$ be an arc with $\sigma(I)< r_2/2$. Then $\sharp\big((2\overline{I})\cap\widetilde{\MCV}\big)\leqslant  1$ and $H_{\tilde{d}}(2I)$ is a Jordan disk for all $\tilde{d}\in (0,d+1]$ by Lemma \ref{lemma:swz}.

\subsection{Non-critical predecessors}\label{subsec:non-crit-s}

Suppose $I\cap \MCV=\emptyset$. There exists a unique arc $J \subset \T$ such that $G(J) = I$.
We call $J$ a $\emph{non-critical predecessor}$ of $I$ and $I\to J$ a \textit{non-critical pullback}.
The pullback $I\to J$ induces a branch $\Phi$ of $G^{-1}$ such that $\Phi(I)=J$.
We say that $\Phi$ is the inverse branch of $G$ \textit{associated} to the pullback $I\to J$.

For the inverse branch $\Phi$, there exist a pair of critical points $c^-,c^+\in\T$ and a pair of critical values $v^-,v^+\in\MCV$ such that
\begin{itemize}
\item $I\subset(v^-,v^+)$ and $J\subset(c^-, c^+)$;
\item $(c^-, c^+)$ does not contain any critical point of $G$; and
\item $\Phi\big((v^-, v^+)\big)=(c^-, c^+)$.
\end{itemize}
We call $v^-$ and $v^+$ the two \textit{singular points} associated to $\Phi$.
In the following we shall see that this will not cause ambiguity when $\T$ contains exactly one critical point of $G$.

\medskip
Suppose $I\cap \big(\bigcup_{k=0}^{n-1}G^k(\MCV)\big)=\emptyset$ for some $n\geqslant 1$. Let $J \subset \T$ be the unique arc such that $G^n(J) = I$.
Then $G^k(J)$ is a non-critical predecessor of $G^{k+1}(J)$ for all $0\leqslant  k\leqslant  n-1$.
By Lemma \ref{lemma:swz-2}(b), for every $0\leqslant  k\leqslant  n-1$, the unique component $Q\subset\EC\setminus\overline{\D}$ of $G^{-(n-k)}(H_d(I))$ with $\partial Q \cap \T = G^k(J)$ satisfies $Q \subset H_{d'}\big(G^k(J)\big)$, where $d'=d+C_0'|I|<d+1$.

\subsection{Critical  predecessors}\label{subsec:crit-s}
Let $0< \delta < \eta < \frac{1}{2}$ be two numbers which will be specified later.

\medskip
\textbf{Case (1)}. Suppose $\MCV\cap I\ne \emptyset$. Then $\MCV \cap  \big((1+2\delta)I  \setminus  I \big) =  \emptyset$ since $\sharp\big((2\overline{I})\cap\widetilde{\MCV}\big)\leqslant  1$. Assume that $I$ contains a critical value $v = G(c)$ for some $c \in \Crit(G)\cap\T$ with local degree $2\ell-1$, where $\ell\geqslant 2$.

Let $a', b' \in \T$ such that $G(a') = a$ and $G(b') = b$. For any $\tilde{d}\in(0,d+1]$, there are exactly $\ell$ components of $G^{-1}(H_{\tilde{d}}(I))$ in $\EC\setminus\overline{\D}$, say $Q_1$, $\cdots$, $Q_\ell$, which attach to $c$ clockwise such that $\overline{Q}_1\cap\T=[c, b']$ and $\overline{Q}_\ell\cap\T= [a', c]$.

For each $1\leqslant  i\leqslant  \ell$, we associate an arc $J\subset\T$ to $I$ for each branch of $G^{-1}$ sending $H_{\tilde{d}}(I)$ to $Q_i$. In particular, if we consider $Q_1$, then we take $J$ such that $(c, b') \subset J$. If we consider $Q_\ell$, then we take $J$ such that $(a', c) \subset J$. If we consider $Q_i$ with $2\leqslant  i\leqslant  \ell-1$, then we take $J$ such that $(c, b') \subset J$ or $(a', c) \subset J$. In all situations, $c$ is an end point of $J$. To fix the idea, we only consider the situation $(c,b') \subset J$  and the remaining cases can be treated in the same way. Then we have the following two types (see Figure \ref{Fig_crit-pullback}):
\begin{enumerate}
\item[(i)] If $\MCV\cap I\ne \emptyset$ and $\sigma\big((v,b)\big) < (1 - \eta)\sigma(I)$, we take $J=(c,x)$ such that $\sigma(J) = (1-\eta/3) \sigma(I)$; and
\item[(ii)] If $\MCV\cap I\ne \emptyset$ and $\sigma\big((v,b)\big)\geqslant(1 - \eta)\sigma(I)$, we take $J=(c,x)$ such that $\sigma(J) = (1+\delta) \sigma(I)$.
\end{enumerate}

\begin{figure}[!htpb]
 \setlength{\unitlength}{1mm}
 \centering
  \includegraphics[width=0.92\textwidth]{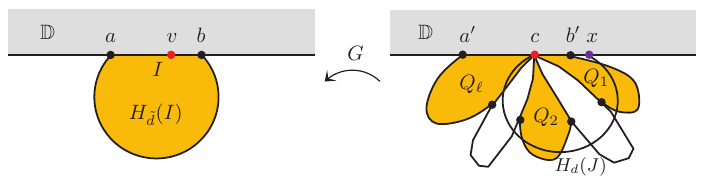}
  \caption{A sketch of critical pullbacks under $G$ of types (i) and (ii). From the arc $I=(a,b)$ we obtain a new arc $J=(c,x)$.}
 \label{Fig_crit-pullback}
\end{figure}

\medskip
\textbf{Case (2)}. Suppose $\MCV\cap I=\emptyset$ and $\MCV \cap \big((1+2\delta)I  \setminus  I \big) \ne \emptyset$. Note that in this case one may consider the non-critical predecessor of $I$ as in Section \ref{subsec:non-crit-s}. However, we can also define critical predecessors as follows.

Let $v= G(c)$ be the unique critical value in $\MCV\cap  \big((1+2\delta)I  \setminus  I \big)$ for $c \in \Crit(G)\cap\T$ of local degree $2\ell-1$ with $\ell\geqslant 2$.
To fix the idea, we only consider the situation that $v$ is on the right of $I$. Denote $\widehat{I}:=(1+2\delta)I =(\widehat{a},\widehat{b})$ and let $\widehat{a}', \widehat{b}' \in \T$ be two points such that $G(\widehat{a}') = \widehat{a}$ and $G(\widehat{b}') = \widehat{b}$. For any $\tilde{d}\in(0,d+1]$, there are $\ell$ components $\widehat{Q}_1$, $\cdots$, $\widehat{Q}_\ell$ of $G^{-1}(H_{\tilde{d}}(\widehat{I}))$ in $\EC\setminus\overline{\D}$ which attach to $c$ clockwise.

We use $Q_i$ to denote the component of $G^{-1}(H_{\tilde{d}}(I))$ which is contained in $\widehat{Q}_i$ for every $1\leqslant  i\leqslant  \ell$. Note that $Q_1$, $\cdots$, $Q_{\ell-1}$ do not attach to the unit circle.
For each $1\leqslant  i\leqslant  \ell-1$, we associate an arc $J$ to $I$ such that $c$ is an end point of $J$. Specifically, we have the following third type (see Figure \ref{Fig_crit-pullback-iii}):
\begin{enumerate}
\item[(iii)] If $\MCV \cap \big((1+2\delta)I  \setminus  I \big) \ne \emptyset$, we take $J=(c,x)$ such that $\sigma(J) = \sigma(I)/2$.
\end{enumerate}

\begin{figure}[!htpb]
 \setlength{\unitlength}{1mm}
 \centering
  \includegraphics[width=0.92\textwidth]{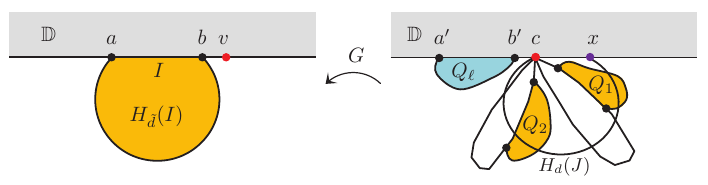}
  \caption{A sketch of critical pullbacks $Q_1$, $\cdots$, $Q_{\ell-1}$ under $G$ of type (iii), which are colored yellow. A non-critical pullback $Q_\ell$ is colored cyan. From the arc $I=(a,b)$ we obtain a new arc $J=(c,x)$.}
 \label{Fig_crit-pullback-iii}
\end{figure}

The arcs $J$ defined in above two cases are called the \emph{critical predecessors} of $I$ of type (i), (ii) and (iii) respectively. We call $I\to J$ a \textit{critical pullback}.

\medskip
If we consider the situation that $(a', c) \subset J$ in Case (1), or the situation that $v$ is on the left of $I$ in Case (2), then we label the components $Q_1$, $\cdots$, $Q_\ell$ of $G^{-1}(H_{\tilde{d}}(I))$ such that they locate around $c$ anticlockwise.

In Cases (1) and (2), $G^{-1}$ induces $\ell$ branches $\{\Phi^{(i)}:1\leqslant  i\leqslant  \ell\}$ of $G^{-1}$ near $v$ such that $\Phi^{(i)}\big(H_{\tilde{d}}(I)\big)=Q_i$.
We say that $\{\Phi^{(i)}:1\leqslant  i\leqslant  \ell-1\}$ are the inverse branches of $G$ \textit{associated} to the pullback $I\to J$.

\medskip
For the inverse branch $\Phi^{(1)}$, there exist a pair of critical points $c^-$, $c^+\in\T$ and a pair of critical values $v^-, v^+\in\MCV$ such that
\begin{itemize}
\item $J\subset(c^-, c^+)$. In particular, if $J=(c,x)$, then $c^-=c$ and $v^-=v$, and if $J=(x,c)$, then $c^+=c$ and $v^+=v$;
\item $(c^-, c^+)$ does not contain any critical point of $G$; and
\item $\Phi^{(1)}\big((v^-, v^+)\big)=(c^-, c^+)$.
\end{itemize}
We call $v^-$ and $v^+$ the two \textit{singular points} associated to $\Phi^{(1)}$.

\medskip
For every $\Phi^{(i)}$ with $2\leqslant  i\leqslant  \ell-1$, we call the critical value $v^\circ=v=G(c^\circ)\in (1+2\delta) I$ with $c^\circ =c $ the \textit{singular point} associated to $\Phi^{(i)}$.

\medskip
Let $\Omega$ be the domain defined in \eqref{equ:Omega}.
Let $J$ be a critical predecessor of $I$ and $Q_1$, $\cdots$, $Q_{\ell-1}$ be the components of $G^{-1}(H_{\tilde{d}}(I))$ of type (i), (ii) or (iii) obtained above.

\begin{lem}\label{lem:crit-pullback}
There exist $r_3=r_3(\delta,\eta,d)\in(0,r_2]$ and $K_0=K_0(\delta,\eta,d)> 0$ such that if $\sigma(I)<r_3$, then there exists a Jordan disk $B_i \subset \Omega$ with $\diam_\Omega(B_i) < K_0$ such that $Q_i\subset H_d(J) \cup B_i$ for every $1\leqslant  i\leqslant  \ell-1$.
\end{lem}

\begin{proof}
In a small neighborhood $W$ of $c$, the quasi-regular map $G$ can be written as $G=g\circ \varphi$, where $\varphi:W\to\varphi(W)$ is quasiconformal and $g:\varphi(W)\to G(W)$ is a proper holomorphic map having exactly one critical value $v$. Without loss of generality, we assume that
\begin{itemize}
\item $W$, $\varphi(W)$ and $G(W)$ are Jordan disks;
\item $W\cap\T$ is an arc and $\varphi(W\cap\T)\subset\T$; and
\item $H_{d+1}(2I)\subset G(W)$.
\end{itemize}

We only consider the situation that $Q_i$, where $1\leqslant  i\leqslant  \ell-1$, are obtained from Case (1) (i.e., type (i) or (ii)) since the proof for Case (2) (i.e., type (iii)) is completely similar.
For the above decomposition $G=g\circ\varphi$, let $\widetilde{c}$, $\widetilde{b}$, $\widetilde{x}\in\T$ and $\widetilde{Q}_i\subset\C\setminus\overline{\D}$, respectively, be the preimages of $v$, $b$, $G(x)$ and $H_{\tilde{d}}(I)$ under the holomorphic map $g$ such that $\widetilde{Q}_i=\varphi(Q_i)$. See Figure \ref{Fig_crit-pullback-bounded}.

\begin{figure}[!htpb]
 \setlength{\unitlength}{1mm}
 \centering
  \includegraphics[width=0.95\textwidth]{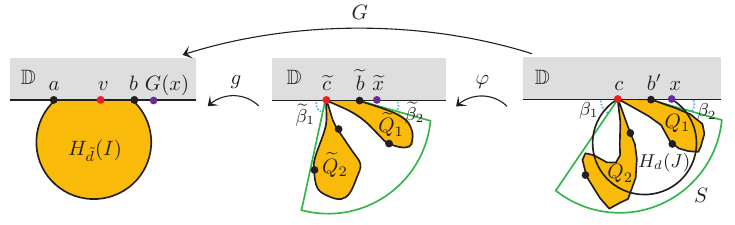}
  \caption{A sketch of the decomposition $G=g\circ\varphi$, where $g$ is holomorphic and $\varphi$ is quasiconformal. We show that every $Q_i$ is contained in the union of $H_d(J)$ and a Jordan disk $B_i$ whose hyperbolic diameter has a uniform upper bound.}
 \label{Fig_crit-pullback-bounded}
\end{figure}

By Lemma \ref{lem:comparable}, $|I|=|(a,b)|$ and $|(b,G(x))|$ are $C_0$-comparable, where $C_0=C_0(\delta,\eta)>0$ is a constant depending only on $\delta$ and $\eta$. Since $g$ is holomorphic, it follows that $\diam_{\C}(\widetilde{Q}_i)$ and $|(\widetilde{c},\widetilde{x})|$ are $C_1$-comparable for a constant $C_1=C_1(\delta,\eta,d)>0$.
For three different points $z_0,z_1,z_2\in\C\setminus\D$, let $\angle\, z_1 z_0 z_2$ be the angle measured in the logarithmic plane of $G$ (see \eqref{equ:angle}).
Then there exist $C_2=C_2(\delta,\eta,d)>0$ and two small numbers $\widetilde{\beta}_1=\widetilde{\beta}_1(\delta,\eta)>0$, $\widetilde{\beta}_2=\widetilde{\beta}_2(\delta,\eta,d)>0$ such that for every $1\leqslant  i\leqslant  \ell-1$, we have
\begin{equation}
\widetilde{Q}_i\subset
\left\{z\in\C\setminus\overline{\D}
\left|
\begin{array}{l}
\qquad\quad\dist_{\C}(z,\widetilde{c})<C_2|(\widetilde{c},\widetilde{x})|, \\
\angle\, z\widetilde{c}\widetilde{x}\in(0,\pi-\widetilde{\beta}_1) \text{ and }\angle\, z\widetilde{b}\widetilde{x}\in(\widetilde{\beta}_2,\pi)
\end{array}
\right.
\right\}.
\end{equation}
Since $H_{d+1}(2I)\subset G(W)$, without loss of generality, we assume that $\big\{z\in\C\setminus\overline{\D}:\dist_{\C}(z,\widetilde{c})\leqslant  C_2|(\widetilde{c},\widetilde{x})|\big\}\subset \varphi(W)$.

Since $\varphi:W\to\varphi(W)$ is quasiconformal, it follows that $\diam_{\C}(Q_i)$ and $|J|=|(c,x)|$ are $C_3$-comparable for a constant $C_3=C_3(\delta,\eta,d)>0$. Hence there exist $C_4=C_4(\delta,\eta,d)>0$ and two smaller numbers $\beta_1=\beta_1(\delta,\eta)>0$, $\beta_2=\beta_2(\delta,\eta,d)>0$ such that for every $1\leqslant  i\leqslant  \ell-1$, we have
\begin{equation}\label{equ:Q-i}
Q_i\subset S:=
\left\{z\in\C\setminus\overline{\D}
\left|
\begin{array}{l}
\qquad\quad\dist_{\C}(z,c)<C_4|(c,x)|, \\
\angle\, zcx\in(0,\pi-\beta_1) \text{ and }\angle\, zb'x\in(\beta_2,\pi)
\end{array}
\right.
\right\}.
\end{equation}

Let $\beta=\beta(d)\in(0,\tfrac{\pi}{2})$ be the angle formed by $\partial H_d(J)\setminus J$ and $\T\setminus J$ (see \eqref{equ:d-choice}). Note that $\beta_1=\beta_1(\delta,\eta)>0$ does not depend on $d>0$. Therefore, increasing $d>0$ and decreasing $\sigma(I)$ if necessary, we assume that $0<\beta<\beta_1/2$. In particular, there exists a small $r_3=r_3(\delta,\eta,d)\in(0,r_2]$ such that if $\sigma(I)<r_3$, then $H_{d+1}(2I)\subset G(W)$ is still satisfied and
\begin{equation}\label{equ:S-position}
S\subset\{z\in\C:1<|z|<(r_0-1)/2\},
\end{equation}
where $r_0>1$ is introduced in \eqref{equ:r-pri}.
Note that $|(b',x)|$ and $|J|$ are $C_5$-comparable, where $C_5=C_5(\delta,\eta)>0$.
It follows that for every $1\leqslant  i\leqslant  \ell-1$, $\overline{S\setminus(H_d(J)\cap Q_i)}$ is contained in a Jordan disk $B_i$ whose Euclidean diameter is $C_6$-comparable to its Euclidean distance to $\T$, where $C_6=C_6(\delta,\eta,d)>0$. Hence $Q_i\subset H_d(J) \cup B_i$ and by \eqref{equ:S-position}, we conclude that there exists $K_0=K_0(\delta,\eta,d)> 0$ such that $\diam_\Omega(B_i) < K_0$ for every $1\leqslant  i\leqslant  \ell-1$.
\end{proof}

\subsection{$(\delta,\eta)$-admissible sequence $\{I_n\}$}

Let us start with a small arc $I_0$ with $\sigma(I_0)< r_2/2$. Applying the rules in Sections \ref{subsec:non-crit-s} and \ref{subsec:crit-s} inductively, we obtain a sequence of arcs $\{I_n:0 \leqslant n <N'\}$   so that $I_{n+1}$ is a predecessor of $I_n$ for all $0\leqslant  n< N'-1$, where $N'=+\infty$ or is finite. The sequence $\{I_n:0 \leqslant n <N'\}$ is called a \textit{$(\delta,\eta)$-admissible sequence}.
A priori, there is a possibility that $\sigma(I_n)$ is large for some $n$, for instance, $I_n$ contains two critical values, and thus $I_{n+1}$ may not be defined and the number $N'$ is finite.

In the following we prove that, by choosing $0 < \delta \ll \eta<\frac{1}{2}$ appropriately, the predecessors can be defined infinitely many times provided that $I_0$ is small enough.
The basic idea behind the proof is that the critical predecessors of types (i) and (iii) appear more often than the critical predecessors of type (ii).
Let
\begin{equation}\label{equ:crit-seq}
0 \leqslant n_1 < n_2 < \cdots<n_j<\cdots
\end{equation}
be the sequence of all the integers so that $I_{n_j+1}$ is a critical predecessor of $I_{n_j}$. As we mentioned above, the sequence \eqref{equ:crit-seq} may be finite.
Let $t_0\geqslant 1$ be the number of critical points (without counting multiplicity) of $G$ on $\T$ (see \eqref{equ:crit-t0}).

\begin{lem}\label{lemma:loss}
If $0 < \delta< \eta < \frac{1}{2}$ are small enough, then for any $(\delta,\eta)$-admissible sequence $\{I_n:0\leqslant  n<N'\}$  containing $I_{n_{j+2t_0}+1}$ for some $j\geqslant 1$,
there exists $0 \leqslant i \leqslant 2t_0$, such that $I_{n_{j+i}+1}$ is a critical predecessor of $I_{n_{j+i}}$ of type \textup{(i)} or \textup{(iii)}.
\end{lem}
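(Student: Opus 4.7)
The plan is to argue by contradiction: assume every pullback $I_{n_{j+i}} \to I_{n_{j+i}+1}$ for $0 \le i \le 2t_0$ is of type (ii). The strategy is to extract, via pigeonhole, two indices with identical ``critical data'' and derive a contradiction from the bounded-type Diophantine property of $\alpha$. First I would pass to the coordinates given by the quasi-symmetric conjugacy $h:\T\to\T$, where $G|_\T$ is the rigid rotation $R_\alpha$ and $\sigma$ coincides with Euclidean length. To each type (ii) pullback I attach the critical point $c^{(i)}\in\Crit(G)\cap\T$ that becomes an endpoint of $I_{n_{j+i}+1}$, and a sign $\epsilon^{(i)}\in\{\pm 1\}$ recording on which side of $I_{n_{j+i}+1}$ that endpoint sits. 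Since type (ii) scales $\sigma$ by $(1+\delta)$ and non-critical pullbacks preserve $\sigma$, the lengths $\sigma_k := \sigma(I_{n_{j+k}}) = (1+\delta)^k\sigma_0$ (with $\sigma_0 := \sigma(I_{n_j})$) all lie within a factor of $2$ of $\sigma_0$ once $(1+\delta)^{2t_0+1}<2$.

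The second step is a telescoping identity. The explicit form of the type (ii) rule anchors one endpoint of $I_{n_{j+i}+1}$ at $c^{(i)}$, and the $N_i-1$ intervening non-critical pullbacks (with $N_i:=n_{j+i+1}-n_{j+i}$) rotate both endpoints by $R_\alpha^{-1}$. Combined with the type (ii) near-endpoint inequality at time $n_{j+i+1}$, this yields
\begin{equation*}
c^{(i+1)} - c^{(i)} + N_i \alpha \equiv \xi_i + \theta_i \pmod{1},
\end{equation*}
where $|\theta_i|\le \eta\sigma_{i+1}$ and $\xi_i$ is zero when $\epsilon^{(i+1)}=\epsilon^{(i)}$ and otherwise $\pm(1+\delta)\sigma_i$ (an ``orientation-flip'' correction). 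I also need the gap bound $N_i\le C_\alpha/\sigma_i$, which follows from Lemma \ref{lem:r-1} applied to the first entry of the $R_\alpha$-orbit of $\widetilde{\MCV}$ into an arc of length $\sim \sigma_i$ together with the bounded-type property of $\alpha$.

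Third, I would apply pigeonhole to the pairs $(c^{(i)},\epsilon^{(i)})$. There are at most $2t_0$ such pairs, so among $2t_0+1$ pullbacks some pair $(c^*,\epsilon^*)$ recurs at indices $0\le a<b\le 2t_0$. Summing the telescoping identity from $i=a$ to $b-1$ and using $c^{(a)}=c^{(b)}$, I obtain $(n_{j+b}-n_{j+a})\alpha \equiv \Theta \pmod 1$. Because $\epsilon^{(a)}=\epsilon^{(b)}$, the flips $\xi_i$ in the window alternate in sign, so their sum collapses to a sum of pair-differences each of magnitude $O(\delta\sigma_0)$; this gives $|\Theta|\le C(\delta+\eta)t_0\sigma_0$. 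Setting $N:=n_{j+b}-n_{j+a}\le 2t_0 C_\alpha/\sigma_0$, the bounded-type estimate $\|N\alpha\|\ge c_\alpha/N\ge c_\alpha\sigma_0/(2t_0 C_\alpha)$ combined with $\|N\alpha\|\le|\Theta|$ forces $\delta+\eta\gtrsim 1/t_0^2$, contradicting the choice of sufficiently small $0<\delta<\eta$.

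The most delicate step is the bookkeeping for the flip corrections $\xi_i$ in the third paragraph. A single flip contributes a term of size $\sim\sigma_0$, which is catastrophically large compared with the target bound $\sim(\delta+\eta)\sigma_0$; the argument only survives because flips must occur in cancelling $\pm$ pairs when $\epsilon^{(a)}=\epsilon^{(b)}$, and successive $\sigma_i$'s differ only by a factor $(1+\delta)$. Making this cancellation quantitative, and verifying that the asymmetric hypothesis $0<\delta\ll\eta$ in the lemma is exactly what is needed to absorb the residual pair-differences, is the main obstacle. The other routine sub-tasks --- verifying the telescoping identity in each of the four sub-cases of $(\epsilon^{(i)},\epsilon^{(i+1)})$, and checking that the entry-time bound $N_i\lesssim 1/\sigma_i$ is uniform over $t_0$ critical orbits --- I expect to be straightforward applications of Lemmas \ref{lem:comparable}--\ref{lemma:fr}.
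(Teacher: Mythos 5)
Your proposal is correct, but it takes a genuinely different route from the paper's. The paper also argues by contradiction on a window of $2t_0+1$ consecutive type (ii) successors, but it stays in the unlinearized picture: type (ii) steps keep the relevant critical values inside $(1+4\eta)^{2t_0}I_{n_j}$, the absence of type (i) steps keeps all forward images of $\MCV$ out of the central part $\big(1-\tfrac{1}{2\lambda_0}\big)I_{n_j}$, and pigeonholing on the $t_0$ critical values yields three visits of a single critical value to the enlarged arc, whereupon Lemma \ref{lemma:fr} (where bounded type enters through $\lambda_0$) forces one of those visits to land at definite relative depth inside $I_{n_j}$ --- a contradiction. You instead linearize by $h$, encode each type (ii) step by its (critical point, side) pair, derive the telescoping congruence $c^{(i+1)}-c^{(i)}+N_i\alpha\equiv\xi_i+\theta_i$, pigeonhole on the $2t_0$ pairs, and contradict $q\|q\alpha\|\ge c_\alpha$ via the hitting-time bound $N\lesssim 1/\sigma_0$. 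I checked the steps you flag as delicate and they do go through: in the four cases of $(\epsilon^{(i)},\epsilon^{(i+1)})$ one gets $\xi_i=0,\ +\sigma_{i+1},\ -\sigma_{i+1},\ 0$, so when the side data agree at the two pigeonholed indices the flips pair off with opposite signs and each pair contributes at most $\sigma_0\big((1+\delta)^{2t_0+1}-1\big)=O(t_0\delta\sigma_0)$; and the gap bound holds because between critical times only non-critical pullbacks are permitted, so a critical step occurs no later than the first time the rotated arc meets $\MCV$, which is the standard bounded-type hitting-time estimate (Lemma \ref{lem:r-1} as stated concerns return rather than hitting times, but the same continued-fraction facts give it). Two minor corrections: the contradiction only needs $\delta+\eta$ small relative to a constant depending on $\alpha$ and $t_0$ (the precise power of $t_0$ is immaterial), and the asymmetry $\delta\ll\eta$ is not what rescues this lemma --- it is needed later, in Lemma \ref{lemma:shorten}. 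As for what each route buys: the paper's argument reuses its return-time machinery (Lemmas \ref{lem:r-1}--\ref{lemma:fr}) and avoids explicit arithmetic, while yours makes the obstruction transparent (a type-(ii)-only window would force a critical orbit to shadow a moving endpoint to accuracy $O((\delta+\eta)\sigma_0)$ over $\lesssim 1/\sigma_0$ steps, which a bounded-type rotation cannot do) and needs only two coincidences of (point, side) data rather than three visits of one critical value, at the price of the orientation-flip bookkeeping.
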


\begin{proof}
We prove this lemma by contradiction. Assume that $I_{n_{j+i}+1}$ is a type (ii) critical predecessor of $I_{n_{j+i}}$ containing the critical value $v_i\in\MCV$ for all $0 \leqslant i \leqslant 2t_0$.  Then from  $0 < \delta< \eta < \frac{1}{2}$  we have $G^{n_{j+i}-n_{j+i-1}}(I_{n_{j+i}}) \subset (1+4\eta)  I_{n_{j+i-1}}$ for every $1\leqslant  i\leqslant  2t_0$. Thus $G^{n_{j+i}-n_{j}}(I_{n_{j+i}}) \subset (1+4\eta)^{2t_0}  I_{n_{j}}$ for $0\leqslant  i\leqslant  2t_0$ and
\begin{equation}\label{equ:he}
G^{n_{j+i}-n_{j}}(v_i) \in (1+4\eta)^{2 t_0}  I_{n_{j}}.
\end{equation}

Let $\lambda_0>1$ be the constant guaranteed  by Lemma \ref{lemma:fr} (In fact, $\lambda_0>2$).
We claim that one can take  $0 < \delta <\eta < \frac{1}{2}$  small enough  such that
\begin{equation}\label{equ:we}
(1+4\eta)^{2t_0}<1+\frac{1}{2\lambda_0}
\end{equation}
and
\begin{equation}\label{equ:well-in-interval}
G^{k}(v)\not\in\Big(1-\frac{1}{2\lambda_0}\Big)I_{n_j}
\end{equation}
for any $0\leqslant  k\leqslant  n_{j+2t_0}-n_j$ and any $v \in   \MCV$.   In fact, we get \eqref{equ:well-in-interval} by contradiction. For otherwise, from $G^{k}(v)\in\big(1-\frac{1}{2\lambda_0}\big)I_{n_j} $ and that $0 < \delta < \eta < 1/2$ are small, it follows that $v \in (1 - 2 \eta) I_{n_j+k}$. But this implies that $I_{n_j+k+1}$ is a critical predecessor of $I_{n_j+k}$ of type (i). This contradicts the assumption and the claim holds.

\medskip
Note that $G$ has exactly $t_0$ critical points on $\T$. By \eqref{equ:he}, there exist a critical value $v \in\MCV$  and  three integers $j\leqslant j_0< j_1< j_2\leqslant j+ 2t_0$ such that $v=v_{j_l}$ for every $0 \leqslant l \leqslant 2$ and
\begin{equation}
G^{n_{j_l} - n_j}(v) \in (1 + 4\eta)^{2t_0}I_{n_j}.
\end{equation}
Denote $k_0:=n_{j_0} - n_j$. There exist two integers $k_1$, $k_2\in [k_0+1,n_{j_2}-n_j]$ such that $G^{k_1}(v)$ and $G^{k_2}(v)$ are the first and second returns of $G^{k_0}(v)$ to $(1 + 4\eta)^{2t_0}I_{n_j}$ respectively.
By Lemma \ref{lemma:fr} and \eqref{equ:we}, there exists an $0 \leqslant l \leqslant 2$   such that $G^{k_l}(v) \in\kappa I_{n_j}$, where
\begin{equation}
\kappa\leqslant \Big(1-\frac{2}{\lambda_0}\Big)(1+4\eta)^{2t_0}< \Big(1-\frac{2}{\lambda_0}\Big)\Big(1+\frac{1}{2\lambda_0}\Big)<1-\frac{1}{2\lambda_0}.
\end{equation}
This contradicts \eqref{equ:well-in-interval}. The proof is complete.
\end{proof}

\begin{lem}\label{lemma:shorten}
There exist $0 < \delta \ll \eta < \frac{1}{2}$ such that if $\sigma(I_0)< r_2/4$, then every $(\delta,\eta)$-admissible sequence $\{I_n:0\leqslant  n<N'\}$ is infinite, i.e., $N'=+\infty$. In particular,
\begin{enumerate}
\item There exists $T_1=T_1(\eta)>0$ such that $\sigma(I_{t}) < \frac{1}{2}\: \sigma(I_{s})$ for any $0 \leqslant s < t<\infty$ if $\{I_n: s\leqslant  n\leqslant  t\}$ contains at least $T_1$ critical predecessors; and
\item $\sigma(I_t)< 2\sigma(I_s)$ for any $0\leqslant  s<t<\infty$ and $\sigma(I_n) \to 0$ as $n \to \infty$.
\end{enumerate}
\end{lem}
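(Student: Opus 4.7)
The plan is to track how each type of pull back changes the dynamical length $\sigma$, then apply the combinatorial frequency bound of Lemma \ref{lemma:loss} to obtain geometric contraction. Since $\sigma$ is $G$-invariant, a non-critical successor leaves $\sigma$ unchanged, while the type (i), (ii), and (iii) critical successors of \S\ref{subsec:crit-s} multiply $\sigma$ by $1-\eta/3$, $1+\delta$, and $\tfrac12$ respectively.

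First, Lemma \ref{lemma:loss} says that in any $2t_0+1$ consecutive critical successors at least one is of type (i) or (iii). Since $\eta<\tfrac12$ gives $1-\eta/3>\tfrac12$, the product of the multiplicative factors over such a block is at most
\[
\lambda:=(1+\delta)^{2t_0}(1-\eta/3).
\]
I would fix $0<\delta\ll\eta<\tfrac12$ satisfying \eqref{equ:we}, \eqref{equ:well-in-interval} together with $\lambda<1$ and $(1+\delta)^{2t_0}<2$; concretely, taking $\eta=1/3$ and $\delta$ small enough that $(1+\delta)^{2t_0}<9/8$ works.

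Next I would prove (a), (b), and $N'=+\infty$ simultaneously by induction on $t$. Suppose $I_0,\ldots,I_t$ are already defined. For any $0\le s<t$, let $k$ denote the number of indices $n\in[s,t-1]$ at which $I_{n+1}$ is a critical successor of $I_n$. Partitioning these $k$ critical successors into consecutive blocks of size $2t_0+1$ (with a tail of at most $2t_0$) and applying Lemma \ref{lemma:loss} within each complete block yields
\[
\sigma(I_t)\ \le\ \lambda^{\lfloor k/(2t_0+1)\rfloor}(1+\delta)^{2t_0}\,\sigma(I_s).
\]
Setting $s=0$ and using $(1+\delta)^{2t_0}<2$ gives $\sigma(I_t)<2\sigma(I_0)<r_2/2$, which is precisely the size control required in \S\ref{subsec:non-crit-s}--\S\ref{subsec:crit-s} to define the next successor $I_{t+1}$. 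So the induction continues, establishing $N'=+\infty$ and (b); and choosing $T_0'$ so large that $\lambda^{\lfloor T_0'/(2t_0+1)\rfloor}(1+\delta)^{2t_0}<\tfrac12$ yields (a).

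Finally, for $\sigma(I_n)\to 0$ I would argue that infinitely many critical successors must occur. Assume for contradiction that from some index $N$ onward every $I_{n+1}$ is a non-critical successor of $I_n$, so $G(I_{n+1})=I_n$ for $n\ge N$. Under the quasi-symmetric conjugacy $h\circ G\circ h^{-1}=R_\alpha$, the arc $h(I_n)$ keeps a fixed positive Lebesgue length while translating by $R_\alpha^{-1}$ at each step. Since $\alpha$ is irrational, this orbit eventually meets the finite set $h(\MCV)$, so some $I_n$ with $n\ge N$ satisfies $I_n\cap\MCV\ne\emptyset$, forcing a critical successor at step $n$ and contradicting the assumption. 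Thus infinitely many critical successors occur, so (a) gives $\sigma(I_{n_j})\to 0$ along the critical-successor subsequence, and (b) propagates this to $\sigma(I_n)\to 0$ in the full sequence. The main obstacle is calibrating $\delta,\eta$ so that $\lambda<1$, $(1+\delta)^{2t_0}<2$, and the constraints \eqref{equ:we} and \eqref{equ:well-in-interval} of Lemma \ref{lemma:loss} hold simultaneously; once this bookkeeping is settled the rest of the proof reduces to the elementary geometric-series estimate above.
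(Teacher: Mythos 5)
Your proof is correct and follows essentially the same route as the paper: Lemma \ref{lemma:loss} supplies one type (i)/(iii) successor in every block of $2t_0+1$ consecutive critical successors, and calibrating $\delta$ against $(1+\delta)^{2t_0}$ yields the block contraction, the bound $\sigma(I_t)<2\sigma(I_s)$ (hence $N'=+\infty$) and the choice of $T_0'$, just as in the paper's proof, while you additionally make explicit -- via minimality of the irrational rotation under the conjugacy $h$ -- why critical successors occur infinitely often, a point the paper leaves implicit when asserting $\sigma(I_n)\to 0$. The only caveat is the parenthetical value $\eta=1/3$: Lemma \ref{lemma:loss} itself requires $\eta$ small (its proof needs, e.g., $(1+4\eta)^{2t_0}<1+\tfrac{1}{2\lambda_0}$), so one must first shrink $\eta$ to make that lemma valid and then shrink $\delta$, exactly as your general calibration statement already permits.
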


\begin{proof}
Let $0<\delta<\eta<\frac{1}{2}$ be small enough such that Lemma \ref{lemma:loss} holds. Moreover, for the chosen small $\eta>0$, we let $\delta\in(0,\eta)$ be sufficiently small such that
\begin{equation}
(1+\delta)^{2t_0}<\tfrac{5}{4} \text{\quad and\quad} (1+\delta)^{2t_0}(1-\tfrac{\eta}{3})<1-\tfrac{\eta}{4}.
\end{equation}
For any $I_{n_j}$ with $\sigma(I_{n_j})<r_2/4$ for some $j\geqslant 1$, by Lemma \ref{lemma:loss}, we have
\begin{itemize}
\item $\sigma(I_k)=\sigma(I_{n_j+1})\leqslant (1+\delta)\sigma(I_{n_j})$, for each $n_j+1\leqslant k\leqslant n_{j+1}$;
\item $\sigma(I_k)\leqslant   (1+\delta)^{2t_0}\sigma(I_{n_j})<\tfrac{5}{4}\sigma(I_{n_j})$, for each $n_j+1\leqslant  k\leqslant  n_{j+2t_0}$; and
\item $\sigma(I_k)= \sigma(I_{n_{j+2t_0}+1})\leqslant   (1+\delta)^{2t_0}(1-\tfrac{\eta}{3})\sigma(I_{n_j})<(1-\tfrac{\eta}{4})\sigma(I_{n_j})$, for each $n_{j+2t_0}+1\leqslant  k\leqslant  n_{j+2t_0+1}$.
\end{itemize}
Therefore, we have
\begin{equation}\label{equ:sigma-I-k}
\begin{split}
\sigma(I_k)&<\tfrac{5}{4}\sigma(I_{n_j}) \text{ for } n_j\leqslant  k\leqslant  n_{j+2t_0+1} \text{\quad and}\\
\sigma(I_{n_{j+2t_0+1}})&<(1-\tfrac{\eta}{4})\sigma(I_{n_j}).
\end{split}
\end{equation}
This implies that every $(\delta,\eta)$-admissible sequence $\{I_n:0\leqslant  n<N'\}$ is infinite.

\medskip
Note that $\sigma(I_k)=\sigma(I_0)$ for each $0\leqslant k\leqslant n_1$. By \eqref{equ:sigma-I-k}, we have $\sigma(I_t)< \frac{5}{4}\sigma(I_s)<2\sigma(I_s)$ for any $0\leqslant  s<t<\infty$.
Let $k_0=k_0(\eta)\geqslant 1$ be the minimal integer which is larger than $\log 4/\log\frac{4}{4-\eta}$. Then $(1-\tfrac{\eta}{4})^{k_0}<\frac{1}{4}$.
Note that \eqref{equ:crit-seq} is an infinite sequence since $G|_{\T}$ is conjugate to the irrational rotation $R_\alpha(\zeta)=e^{2\pi\ii\alpha}\zeta$.
Set
\begin{equation}
T_1:=(2t_0+1)k_0.
\end{equation}
Still by \eqref{equ:sigma-I-k}, if $\{I_n: s\leqslant  n\leqslant  t\}$ contains at least $T_1$ critical predecessors, then
\begin{equation}
\sigma(I_{t}) < \tfrac{5}{4}(1-\tfrac{\eta}{4})^{k_0} \sigma(I_{s})<\tfrac{1}{2} \sigma(I_{s}).
\end{equation}
Hence $\sigma(I_n) \to 0$ as $n \to \infty$. The proof is complete.
\end{proof}

From now on, when we talk about a $(\delta,\eta)$-admissible sequence $\{I_n\}$, we always assume that
$\sigma(I_0)<r_2/4$
and that $0 < \delta \ll \eta < \frac{1}{2}$ are taken appropriately so that Lemma \ref{lemma:shorten} holds.

\medskip
For a $(\delta,\eta)$-admissible sequence $\{I_n\}_{n\geqslant 0}$ and $0\leqslant k_1\leqslant k_2$, we denote
\begin{equation}
\Theta(\{I_n\},k_1,k_2):=\sharp\,\big\{ \text{critical predecessors in } \{I_n:k_1\leqslant  n\leqslant  k_2\}\big\}.
\end{equation}
For $j\geqslant 1$, let $n_j$ be the critical (value) position of $\{I_n\}_{n\geqslant 0}$ defined in \eqref{equ:crit-seq}.

\begin{lem}\label{lem:crit-pull-enhance}
Let $\{I_n\}_{n\geqslant 0}$ and $\{J_n\}_{n\geqslant 0}$ be two $(\delta,\eta)$-admissible sequences with $I_n\subset J_n$ for all $n\geqslant 0$. Suppose $\sigma(J_0)\leqslant  C_1\sigma(I_0)$ for some $C_1=C_1(\delta,\eta)\geqslant 1$. Then for every $j\geqslant 1$, there exists $C_1'=C_1'(\delta,\eta,j)\geqslant 1$ such that
\begin{equation}
\Theta(\{J_n\},0, n_j)\leqslant  C_1'.
\end{equation}
\end{lem}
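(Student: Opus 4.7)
The plan is to bound $N':=N(\{I_n'\},0,n_j)$ by combining a lower bound on $\sigma(I_{n_j}')$ coming from the reference subsequence $\{I_n\}$ with the exponential decay of dynamical length provided by Lemma~\ref{lemma:shorten}(a) applied to $\{I_n'\}$.

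First, by inspecting the three types of critical successors defined in \S\ref{subsec:crit-s}, I would observe that each critical pull back multiplies $\sigma$ by a factor at least $1/2$: namely $1-\eta/3$ in type (i), $1+\delta$ in type (ii), and exactly $1/2$ in type (iii). Non-critical successors preserve dynamical length. Since the segment $\{I_n:0\leq n\leq n_j\}$ contains exactly $j-1$ critical pull backs (at positions $n_1+1,\ldots,n_{j-1}+1$), I conclude
$$\sigma(I_{n_j})\geq 2^{-(j-1)}\sigma(I_0)\geq \frac{\sigma(I_0')}{C_0'\cdot 2^{j-1}}.$$
The nesting hypothesis $I_{n_j}\subset I_{n_j}'$ gives $\sigma(I_{n_j}')\geq\sigma(I_{n_j})$, so the same lower bound applies to $\sigma(I_{n_j}')$.

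Next, writing $N'=kT_0'+r$ with $0\leq r<T_0'$, I would iterate Lemma~\ref{lemma:shorten}(a) along $k$ consecutive blocks in $\{I_n':0\leq n\leq n_j\}$ each containing $T_0'$ critical pull backs, and then apply the universal factor-$2$ bound of Lemma~\ref{lemma:shorten}(b) to control the leftover tail with fewer than $T_0'$ critical pull backs. This yields
$$\sigma(I_{n_j}')<2\cdot 2^{-k}\sigma(I_0')=2^{-(k-1)}\sigma(I_0').$$

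Comparing the two bounds gives $2^{-(k-1)}>2^{-(j-1)}/C_0'$, whence $k<j+\log_2 C_0'$ and therefore
$$N'<(k+1)T_0'\leq (j+\log_2 C_0'+1)\,T_0'.$$
So I would set $C_0''(\delta,\eta,j):=(j+\log_2 C_0'+1)T_0'$.

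The argument is essentially bookkeeping of dynamical lengths once the per-step contraction/expansion factors are read off from the construction in \S\ref{subsec:crit-s}, so I do not anticipate any significant obstacle; the only mildly subtle point is the handling of the tail with fewer than $T_0'$ critical pull backs, which is precisely what the universal doubling bound Lemma~\ref{lemma:shorten}(b) is designed to absorb.
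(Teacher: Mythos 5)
Your argument is correct, and it is organized differently from the paper's. The paper proves the bound by induction over the critical positions $n_1<\cdots<n_j$ of the inner sequence: it bounds $N(\{I_n'\},n_{l-1}+1,n_l)$ block by block via a contradiction argument with Lemma \ref{lemma:shorten}, while propagating a comparability $\sigma(I_{n_l+1}')\leq C_l'\sigma(I_{n_l+1})$ whose constant degrades by a factor $4$ at each critical position (using $\sigma(I_{n_l+1})\geq\tfrac12\sigma(I_{n_l})$ and Lemma \ref{lemma:shorten}(b)), which yields a bound of order $j^2T_0'$. You instead make a single global comparison: the lower bound $\sigma(I_{n_j}')\geq\sigma(I_{n_j})\geq 2^{-(j-1)}\sigma(I_0)\geq 2^{-(j-1)}\sigma(I_0')/C_0'$, read off from the per-step factors of the three types of critical successors together with $G$-invariance of $\sigma$ on non-critical steps, against the upper bound $\sigma(I_{n_j}')<2^{-(k-1)}\sigma(I_0')$ obtained by applying Lemma \ref{lemma:shorten}(a) to $k$ consecutive blocks of $T_0'$ critical successors of $\{I_n'\}$ and Lemma \ref{lemma:shorten}(b) to the leftover tail. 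The ingredients are the same (Lemma \ref{lemma:shorten}, the factor-$\tfrac12$ estimate per critical pull back, monotonicity of $\sigma$ under inclusion), but your one-shot comparison avoids the induction and the geometrically growing constants, and it even gives a sharper constant, linear in $j$, namely $C_0''=(j+\log_2 C_0'+1)T_0'$, which is all that is needed where the lemma is invoked (only the total count enters the proof of Proposition \ref{prop:three special regions}). The only points requiring care — that the block decomposition of the critical successors of $\{I_n'\}$ satisfies the hypothesis of Lemma \ref{lemma:shorten}(a) for each block, and that the standing smallness assumption $\sigma(I_0')<r_2/4$ lets you apply that lemma to $\{I_n'\}$ — are handled correctly in your write-up.
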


\begin{proof}
By the definition of $n_j$, we have $\sigma(I_{n_1})=\sigma(I_0)$. Let $i_0$ be the minimal integer such that $i_0\geqslant \frac{\log C_1}{\log 2}+1$. We claim that
\begin{equation}\label{equ:a-1}
\Theta(\{J_n\},0,n_1)\leqslant  i_0T_1.
\end{equation}
Indeed, otherwise by Lemma \ref{lemma:shorten} we have
\begin{equation}
\sigma(J_{n_1})<2\,\Big(\frac{1}{2}\Big)^{i_0}\sigma(J_0)\leqslant  \frac{1}{C_1}\sigma(J_0)\leqslant  \sigma(I_0)=\sigma(I_{n_1}).
\end{equation}
This is a contradiction since $I_{n_1}\subset J_{n_1}$. Hence \eqref{equ:a-1} holds.
By Lemma \ref{lemma:shorten}(b), we have $\sigma(J_{n_1+1})< 2\sigma(J_0)\leqslant  2C_1\sigma(I_0)$.
By the definition of critical predecessors, we have $\sigma(I_{n_1+1})\geqslant \frac{1}{2}\sigma(I_{n_1})$.
Hence
\begin{equation}\label{equ:b-1}
\sigma(J_{n_1+1})< 2C_1\sigma(I_0)=2C_1\sigma(I_{n_1})\leqslant  C_2\sigma(I_{n_1+1}), \text{\quad where }C_2:=4C_1.
\end{equation}

Note that $\sigma(I_{n_1+1})=\sigma(I_{n_2})\leqslant  2\sigma(I_{n_2+1})$. Let $i_1$ be the minimal integer such that $i_1\geqslant \frac{\log C_2}{\log 2}+1$. Then by Lemma \ref{lemma:shorten} and \eqref{equ:b-1} we have
\begin{equation}
\Theta(\{J_n\},n_1+1,n_2)\leqslant  i_1T_1
\end{equation}
and
\begin{equation}
\sigma(J_{n_2+1})< 2\sigma(J_{n_1+1})\leqslant  2C_2\sigma(I_{n_1+1})\leqslant  C_3\sigma(I_{n_2+1}),
\end{equation}
where $C_3:=4C_2=4^2 C_1$.

\medskip
Inductively, for every $l\geqslant 2$, we have
\begin{equation}\label{equ:a-l}
\Theta(\{J_n\},n_{l-1}+1,n_l)\leqslant  i_{l-1}T_1,
\end{equation}
where $i_{l-1}$ is the minimal integer such that
\begin{equation}
i_{l-1}\geqslant \frac{\log C_l}{\log 2}+1= \frac{\log (4^{l-1}C_1)}{\log 2}+1=2l-1+\frac{\log C_1}{\log 2}.
\end{equation}
Therefore, we have
\begin{equation}\label{equ:i-l-1}
i_{l-1}\leqslant  2l+\frac{\log C_1}{\log 2}.
\end{equation}
Denote $n_0:=-1$. Combining \eqref{equ:a-1}, \eqref{equ:a-l} and \eqref{equ:i-l-1}, we have
\begin{equation}
\Theta(\{J_n\},0,n_j)= \sum_{l=1}^j \Theta(\{J_n\},n_{l-1}+1,n_l)\leqslant  T_1\sum_{l=1}^j i_{l-1}
\leqslant  C_1',
\end{equation}
where $C_1'=\big(\frac{\log C_1}{\log 2}+j+1\big)jT_1$.
The proof is complete.
\end{proof}

\begin{defi}[Deviation]
Let $\{I_n\}_{n\geqslant 0}$ be a $(\delta,\eta)$-admissible sequence. For two integers $k\geqslant 1$ and $n\geqslant 0$, the \textit{deviation} from $I_{n+k}$ to $I_n$ is defined as the dynamical length of the shorter arc in $\T$ between $G^k(I_{n+k})$ and $I_n$.
\end{defi}

By definition, if $G^k(I_{n+k})$ intersects $I_n$, then the deviation from $I_{n+k}$ to $I_n$ is zero. In particular, if $I_{j+1}$ is a non-critical predecessor of $I_j$ for all $n \leqslant j \leqslant n+k-1$, then the deviation from $I_{n+k}$ to $I_n$ is zero.
Note that the deviation is caused by critical predecessors.
Since $\sigma(I_{n+k})< 2\sigma(I_n)$ for all $k\geqslant 1$, as a consequence of Lemma \ref{lemma:shorten}, we have the following immediate corollary.

\begin{cor}\label{cor:deviation}
Let $\{I_n\}_{n\geqslant 0}$ be a $(\delta,\eta)$-admissible sequence. Then for every $l \geqslant 1$, there is a number $K(\delta,\eta, l) > 1$ depending only on $\delta$, $\eta$ and $l$, such that for any $n \geqslant 0$, the deviation from $I_{n+k}$ to $I_n$ is at most $K(\delta,\eta, l)\sigma(I_n)$ provided the number of the critical predecessors  between $n$ and $n+k$ is not more than $l$.
\end{cor}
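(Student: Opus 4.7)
The plan is to track how the forward image $G^j(I_{n+j})$ drifts from the reference arc $I_n$ as $j$ ranges from $0$ to $k$, and to show that only critical pullbacks contribute drift, each bounded by a controlled multiple of $\sigma(I_n)$. The key input is that $G|_\T$ is a critical circle homeomorphism and that $G$ preserves dynamical length, so $G^j:\T\to\T$ is a $\sigma$-isometry that preserves both pointwise $\sigma$-distance and Hausdorff $\sigma$-distance between subarcs.

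I would then analyze each transition $I_{n+j}\to I_{n+j+1}$ case by case. For a non-critical successor the definition in \S\ref{subsec:non-crit-s} gives $G(I_{n+j+1})=I_{n+j}$ exactly, so $G^{j+1}(I_{n+j+1})=G^j(I_{n+j})$ and no drift is introduced. For a critical successor I would inspect the three types in \S\ref{subsec:crit-s}: in types (i) and (ii) the arcs $G(I_{n+j+1})$ and $I_{n+j}$ share the critical value $v$ as a common endpoint and have $\sigma$-lengths comparable up to factors $(1\pm\eta)$ and $(1+\delta)$; in type (iii) the arc $G(I_{n+j+1})$ has $\sigma$-length $\sigma(I_{n+j})/2$ and is anchored at $v\in(1+2\delta)I_{n+j}\setminus I_{n+j}$. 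In all three cases the Hausdorff $\sigma$-distance between $G(I_{n+j+1})$ and $I_{n+j}$ is bounded by some constant $C(\delta,\eta)\sigma(I_{n+j})$. Since $G^j$ is a $\sigma$-isometry, applying it to both arcs preserves this bound, so the arcs $J_j:=G^j(I_{n+j})$ and $J_{j+1}:=G^{j+1}(I_{n+j+1})$ lie within Hausdorff $\sigma$-distance $C(\delta,\eta)\sigma(I_{n+j})\leq 2C(\delta,\eta)\sigma(I_n)$ of each other, using Lemma \ref{lemma:shorten}(b) in the last step.

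Finally, letting $d_j$ denote the deviation of $J_j$ from $I_n$, a direct argument from the definition of deviation as the shorter $\sigma$-gap between two arcs in $\T$ shows that $|d_{j+1}-d_j|$ is controlled by the Hausdorff $\sigma$-distance between $J_{j+1}$ and $J_j$. Summing over the transitions, with non-critical steps contributing nothing and at most $l$ critical steps each contributing at most $2C(\delta,\eta)\sigma(I_n)$, gives $d_k\leq 2lC(\delta,\eta)\sigma(I_n)$, so I would set $K(\delta,\eta,l):=2lC(\delta,\eta)$. The main subtlety I expect is the type (iii) case, where $G(I_{n+j+1})$ need not meet $I_{n+j}$ at all and where one has to use both the confinement $v\in(1+2\delta)I_{n+j}$ and the prescribed length $\sigma(I_{n+j+1})=\sigma(I_{n+j})/2$ to see that the Hausdorff distance remains $O(\sigma(I_{n+j}))$; the remainder of the argument is an induction on the number of critical steps combined with Lemma \ref{lemma:shorten}(b).
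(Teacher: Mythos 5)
Your proposal is correct and is essentially the argument the paper has in mind: the paper treats this as an immediate consequence of Lemma \ref{lemma:shorten}, noting that only critical successors cause deviation and that each one shifts the arc by an amount comparable to $\sigma(I_{n+j})<2\sigma(I_n)$, which is exactly the per-step Hausdorff-distance bound you make explicit before summing over the at most $l$ critical steps.
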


\subsection{Contraction regions associated to  $\{I_n\}$}

Consider a $(\delta,\eta)$-admissible sequence $\{I_n\}_{n\geqslant 0}$, where $0<\delta\ll\eta<\frac{1}{2}$ are two fixed small numbers.
For $n \geqslant 0$, let $\Phi_n$ be the inverse branch of $G$ associated to the pullback $I_n\to I_{n+1}$.
If $I_n\to I_{n+1}$ is a non-critical pullback (see Section \ref{subsec:non-crit-s}) or a critical pullback with $\Phi_n=\Phi_n^{(1)}$ (see Section \ref{subsec:crit-s}), we use $v_n^-=G(c_n^-)$ and $v_n^+=G(c_n^+)$ to denote the singular points associated to $\Phi_n$, where $c_n^-,c_n^+\in\Crit(G)\cap\T$. Otherwise, let $v_n^\circ=G(c_n^\circ)$ be the singular point associated to $\Phi_n$, where $c_n^\circ\in\Crit(G)\cap\T$.

\begin{defi}[The rays $L_n^\pm$ associated to $v_n^\pm$]
Let $v_n^\pm = G(c_n^\pm)$ be the singular points associated to $\Phi_n$, and let $L_n^\pm$ be the rays (in the logarithmic plane of $G$) starting from $v_n^\pm$ which form an angle $0<\beta<\pi/3$ with $\T$ (clockwise for $v_n^+$ and anticlockwise for $v_n^-$), where $\beta$ will be specified later.
By Lemma \ref{lemma:Pe}, $\Phi_n$ contracts the hyperbolic metric $\rho_\Omega(z)|dz|$ strictly in $V_n^\pm\subset\Omega$, where $V_n^\pm$ consists of the points which are below $L_n^\pm$ and in a small neighborhood of $v_n^\pm$ (see Figure \ref{Fig_G-pull-back}). In the following we refer $L_n^\pm$ \emph{the rays associated to $v_n^\pm$}.
\end{defi}

\begin{figure}[!htpb]
  \setlength{\unitlength}{1mm}
  \centering
  \includegraphics[width=0.95\textwidth]{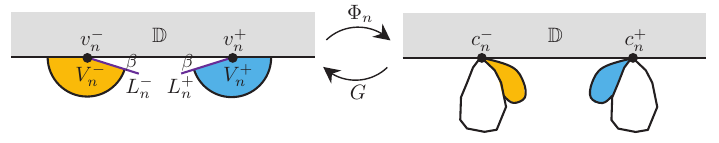}
  \caption{The rays $L_n^{\pm}$ associated to the singular points $v_n^{\pm}$ of the branch $\Phi_n$ of $G^{-1}$. The map $\Phi_n$ contracts the hyperbolic metric $\rho_\Omega(z)|dz|$ in two domains $V_n^\pm$ colored dark cyan and yellow respectively.}
  \label{Fig_G-pull-back}
\end{figure}

By Lemma \ref{lemma:Pe}, if the singular point associated to $\Phi_n$ is $v_n^\circ$, then $\Phi_n$ contracts the hyperbolic metric $\rho_\Omega(z)|dz|$ strictly in $V_n^\circ\subset\Omega$, where $V_n^\circ$ consists of the points which are in a small neighborhood of $v_n^\circ$ and are outside of $\overline{\D}$.

\medskip
By Lemma \ref{lemma:shorten}, $\sigma(I_n)< 2\sigma(I_0)$ for all $n\geqslant 0$ and $\sigma(I_n) \to 0$ as $n \to \infty$.
Denote $s_0:= r_1/8$, where $r_1=r_1(d)>0$ is introduced in Lemma \ref{lemma:swz}.
For $n\geqslant 0$, we define
\begin{equation}\label{equ:D-n-j}
D_n^{s_0} := \big\{z\in\C\setminus\overline{\D}:\,\dist_{\C}(z, I_n) < s_0 \big\}.
\end{equation}
Then $D_n^{s_0}\cap \MP(G)=\emptyset$ for all $n\geqslant 0$. Note that $D_n^{s_0}$ is simply connected and  $\Phi_n$ is defined in $D_n^{s_0}$ by analytic continuation of $\Phi_n|_{I_n}$. Let $r_3=r_3(\delta,\eta,d)>0$ be introduced in Lemma \ref{lem:crit-pullback}. Without loss of generality, we assume that $r_3$ is small such that if $\sigma(I_0)<r_3$, then $H_{d+1}(I_n)\subset D_n^{s_0}$ for all $n\geqslant 0$.

\begin{lem}\label{lem:observation}
For any integer $T'\geqslant 1$, there exists $r_4=r_4(\delta,\eta,d,T')\in(0,r_3]$ such that if $\sigma(I_0)<r_4$, then for any $n\geqslant 0$ and $m\geqslant n$, if $\{I_k: n\leqslant  k\leqslant  m+1\}$ contains at most $T'$ critical predecessors, then
\begin{equation}\label{equ:Phi-n-composition}
\Phi_{m}\circ \cdots\circ\Phi_{n+1}\circ\Phi_n\big(H_d(I_n)\big) \subset D_{m+1}^{s_0}.
\end{equation}
\end{lem}

%Consequently, for any simply connected domain $D_n$ in $\EC\setminus\big(\overline{\D}\cup\MP(G)\big)$ containing $H_d(I_n)$, $\Phi_{m}\circ\cdots\circ\Phi_{n+1}\circ\Phi_n$ can be extended analytically from $H_d(I_n)$ to $D_n$.

\begin{proof}
Let $n_j$ be the smallest integer $\geqslant n$ such that $I_{n_j}\to I_{n_j+1}$ is a critical pullback. If $\sigma(I_0)$ is small, then for every $n\leqslant  k\leqslant  n_j-1$, by Lemma \ref{lemma:swz-2}(b), $\Phi_{k}\circ \cdots\circ\Phi_{n+1}\circ\Phi_n\big(H_d(I_n)\big)$ is defined and contained in $H_{d'}(I_{k+1})$ with $d'\in[d,d+1]$, where $H_{d'}(I_{k+1})\subset D_{k+1}^{s_0}$.
By Lemma \ref{lem:crit-pullback}, $\Phi_{n_j}\circ \cdots\circ\Phi_{n+1}\circ\Phi_n\big(H_d(I_n)\big)$ is defined and contained in $H_d(I_{n_j+1})\cup B_{n_j+1}^1$, where $B_{n_j+1}^1$ is a Jordan disk in $\Omega$ satisfying
\begin{equation}
B_{n_j+1}^1\cap H_d(I_{n_j+1})\neq\emptyset \text{\quad and\quad} \diam_{\Omega}(B_{n_j+1}^1)< K_0=K_0(\delta,\eta,d).
\end{equation}
If $\sigma(I_0)$ is small enough, then $H_d(I_{n_j+1})\cup B_{n_j+1}^1\subset D_{n_j+1}^{s_0}$.

Since $D_{n_j+1}^{s_0}$ is simply connected and $D_{n_j+1}^{s_0}\cap \MP(G)=\emptyset$, $\Phi_{n_{j+1}}\circ \cdots\circ\Phi_{n_j+1}\big(D_{n_j+1}^{s_0}\big)$ is well defined.
By Lemmas \ref{lemma:swz-2}(b) and \ref{lem:crit-pullback}, $\Phi_{n_{j+1}}\circ \cdots\circ\Phi_{n_j+1}\big(H_d(I_{n_j+1})\big)$ is contained in $H_d(I_{n_{j+1}+1})\cup B_{n_{j+1}+1}^1$, where $B_{n_{j+1}+1}^1$ is a Jordan disk in $\Omega$ satisfying
\begin{equation}
B_{n_{j+1}+1}^1\cap H_d(I_{n_{j+1}+1})\neq\emptyset \text{\quad and\quad} \diam_{\Omega}(B_{n_{j+1}+1}^1)< K_0.
\end{equation}
Moreover, by the Schwarz-Pick Lemma, $B_{n_{j+1}+1}^{(1)}:=\Phi_{n_{j+1}}\circ \cdots\circ\Phi_{n_j+1}\big(B_{n_j+1}^1\big)$ satisfies
\begin{equation}
B_{n_{j+1}+1}^{(1)}\cap \big(H_d(I_{n_{j+1}+1})\cup B_{n_{j+1}+1}^1\big)\neq\emptyset \text{\quad and\quad} \diam_{\Omega}(B_{n_{j+1}+1}^{(1)})< K_0.
\end{equation}
If $B_{n_{j+1}+1}^{(1)}\cap H_d(I_{n_{j+1}+1})\neq\emptyset$, we denote $B_{n_{j+1}+1}^2:=B_{n_{j+1}+1}^{(1)}$. Otherwise, $B_{n_{j+1}+1}^{(1)}\cap \big(B_{n_{j+1}+1}^1 \setminus H_d(I_{n_{j+1}+1})\big)\neq\emptyset$. In this case, let $B_{n_{j+1}+1}^2$ be the interior of the union of $\overline{B_{n_{j+1}+1}^1\cup B_{n_{j+1}+1}^{(1)}}$ and all the bounded components of $\C\setminus\big(\overline{B_{n_{j+1}+1}^1\cup B_{n_{j+1}+1}^{(1)}}\big)$. Then $B_{n_{j+1}+1}^2$ is a Jordan domain in $\Omega$ (by assuming $\sigma(I_0)$ is small) and
\begin{equation}
B_{n_{j+1}+1}^{2}\cap H_d(I_{n_{j+1}+1}) \neq\emptyset \text{\quad and\quad} \diam_{\Omega}(B_{n_{j+1}+1}^2)< 2K_0.
\end{equation}
Thus $\Phi_{n_{j+1}}\circ \cdots\circ\Phi_{n+1}\circ\Phi_n\big(H_d(I_n)\big)$ is defined and contained in $H_d(I_{n_{j+1}+1})\cup B_{n_{j+1}+1}^1\cup B_{n_{j+1}+1}^2$, where each $B_{n_{j+1}+1}^k$ with $k=1,2$ is a Jordan disk in $\Omega$ satisfying
\begin{equation}
B_{n_{j+1}+1}^k\cap H_d(I_{n_{j+1}+1})\neq\emptyset \text{\quad and\quad} \diam_{\Omega}(B_{n_{j+1}+1}^k)< 2K_0.
\end{equation}
Hence if $\sigma(I_0)$ is small enough, then $H_d(I_{n_{j+1}+1})\cup B_{n_{j+1}+1}^1\cup B_{n_{j+1}+1}^2\subset D_{n_{j+1}+1}^{s_0}$.

Let $T'\geqslant 1$ be given. Repeating the above process, by Lemmas \ref{lemma:swz-2}(b) and \ref{lem:crit-pullback}, there exists a small $r_4=r_4(\delta,\eta,d,T')\in(0,r_3]$ such that if $\sigma(I_0)<r_4$, then for every $n\leqslant  \ell\leqslant  m$, there exists $1\leqslant  p_\ell\leqslant  T'$ such that
\begin{equation}\label{equ:W-p}
\Phi_{\ell}\circ \cdots\circ\Phi_{n+1}\circ\Phi_n\big(H_d(I_n)\big)\subset H_{\tilde{d}}(I_{\ell+1})\cup \bigcup_{k=1}^{p_\ell} B_{\ell+1}^k \subset D_{\ell+1}^{s_0},
\end{equation}
where each $B_{\ell+1}^k$ with $1\leqslant  k\leqslant  p_\ell$ is a Jordan disk in $\Omega$ satisfying
\begin{equation}\label{equ:B-k-m}
B_{\ell+1}^k\cap H_{\tilde{d}}(I_{\ell+1})\neq\emptyset \text{\quad and\quad}
\diam_{\Omega}(B_{\ell+1}^k)< p_\ell K_0\leqslant   T' K_0,
\end{equation}
and moreover, $\tilde{d}=d$ if $I_{\ell}\to I_{\ell+1}$ is a critical pullback and $\tilde{d}=d'\in[d,d+1]$ if $I_{\ell}\to I_{\ell+1}$ is a non-critical pullback.
The proof is complete.
\end{proof}

Denote
\begin{equation}\label{equ:T-0}
T_0:=t_0+1, \text{\quad where\quad} t_0=\sharp\,(\Crit(G)\cap\T)=\sharp\,\MCV\geqslant 1.
\end{equation}
Let $n_j$ be defined in \eqref{equ:crit-seq}.
By Lemma \ref{lem:observation}, $\Phi_{n_{j+T_0}} \circ \cdots \circ \Phi_{n_j+1}\circ \Phi_{n_j}$ is defined in $D_{n_j}^{s_0}$ if $\sigma(I_0)<r_4:=r_4(\delta,\eta,d,T_0)=r_4(\delta,\eta,d)$.
The following result is the most important ingredient in the proof of the Main Lemma.

\begin{prop}\label{prop:three special regions}
There exist $d>1$, $r_5=r_5(\delta,\eta,d)\in(0,r_4]$, $C_0=C_0(\delta,\eta,d)>1$ and $0<\mu_0=\mu_0(d)<1$, such that if $\sigma(I_0)<r_5$, then for every $j\geqslant 1$, there exists an arc $J \subset \T$ containing $I_{n_{j+T_0}+1}$ with $\sigma(J) < C_0 \sigma(I_{n_{j+T_0}+1})$ and a triangle $Z\subset D_{n_j}^{s_0}$ such that the preimage $w=\Phi_{n_{j+T_0}} \circ \cdots \circ \Phi_{n_j+1}\circ \Phi_{n_j}(z)$ with $z \in D_{n_j}^{s_0}$ satisfies the following:
\begin{enumerate}
\item $w\in H_d(J)$ for any $z\in Z$; and
\item $\rho_\Omega(w) < \mu_0\,\rho_\Omega(z)|(G^{n_{j+T_0}-n_j+1})'(w)| $ for any $z\in D_{n_j}^{s_0}\setminus Z$.
\end{enumerate}
\end{prop}

\begin{proof}
By Corollary \ref{cor:deviation}, there exists a number $K_1=K_1(\delta,\eta,T_0)>1$ depending only on $\delta$, $\eta$ and $T_0$ such that for any $n_j\leqslant  k\leqslant  n_{j+T_0}+2$, if $x\in(1+2\delta) I_k$, then
\begin{equation}\label{equ:position-G}
G^{k-n_j}(x)\in (2K_1+1) I_{n_j}.
\end{equation}
For $j\leqslant i\leqslant j+T_0$, let $v_{n_i} \in \MCV$  be the unique critical value such that $v_{n_i} \in (1+2\delta)I _{n_i}$.
Then $G^{n_i-n_j}(v_{n_i})\in (2K_1+1) I_{n_j}$ for every $j \leqslant i \leqslant j+T_0$.

Without loss of generality, we assume that $v_{n_j}=v_{n_j}^-$. The proof for $v_{n_j}=v_{n_j}^+$ is completely the same and for $v_{n_j}=v_{n_j}^\circ$, Part (b) can be obtained by applying Lemma \ref{lemma:Pe} directly and the proof for Part (a) is the same to $v_{n_j}=v_{n_j}^-$ (see the following Case 1).
Let $L=L_{n_j}^-$ be the ray associated to $v:=v_{n_j}^-$ which forms an angle $0<\beta<\pi/3$ with $\T$.
Since $\sigma(I_{n_j})< 2\sigma(I_0)$, decreasing $\sigma(I_0)$ if necessary, there exists $l\geqslant 5$ such that
the closest return $G^{q_l}(v)$ is on the same side as $L$, i.e., the angle formed by $L$ and $[v,G^{q_l}(v)]$ is $\beta$, and moreover,
\begin{equation}\label{equ:choice-q-l}
\sigma\big((v,G^{q_{l+2}}(v))\big) \leqslant (K_1+2)\sigma(I_{n_j})< \sigma\big((v,G^{q_l}(v))\big).
\end{equation}

Since $s_0=r_1/8$ and $\sigma(I_{n_j})< 2\sigma(I_0)$, decreasing $\sigma(I_0)$ if necessary, without loss of generality, we assume that $[G^{q_{l-1}}(v),G^{q_{l-4}}(v)]$ is contained in the interior of $\partial D_{n_j}^{s_0}\cap\T=[x_0',x_0]$ and $\sigma\big([x_0',x_0]\big)<r_1$.
Let $X$ be the subregion of $D_{n_j}^{s_0}$ which is below the ray $L$.
Let $Y$ be the closed trapezoid  in $\C\setminus\overline{\D}$ which is bounded by the ray $L$, the interval $[G^{ q_{l-4}}(v),x_0]$, and the two vertical straight segments passing through $G^{q_{l-4}}(v)$ and $x_0$, respectively.
Let $Z$ denote the closed right triangle in $\C\setminus\overline{\D}$ bounded by  the interval $[v, G^{q_{l-4}}(v)]$, the ray $L$, and the vertical segment passing through $G^{q_{l-4}}(v)$.
See Figure \ref{Fig_neigh-XYZ}.

\begin{figure}[!htpb]
  \setlength{\unitlength}{1mm}
  \centering
  \includegraphics[width=0.92\textwidth]{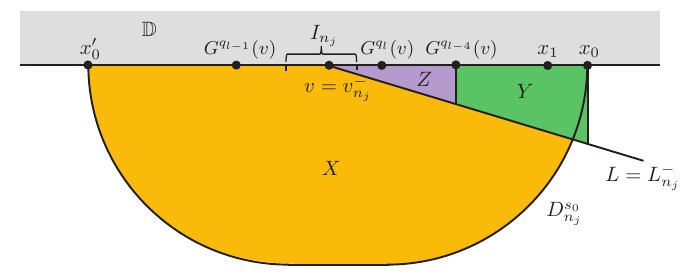}
  \caption{The interval $I_{n_j}$ and its half neighborhood $D_{n_j}^{s_0}$ with a partition.}
  \label{Fig_neigh-XYZ}
\end{figure}

\medskip
\textbf{Case 1}:
Suppose $z \in Z$. Denote $I_{n_j}=(a,b)$. If $a\in (v,b)$, we denote $J_0: = (v, G^{q_{l-4}}(v))$. Otherwise, we denote $J_0: = (a, G^{q_{l-4}}(v))$. Then we have $I_{n_j}\subset J_0$. By \eqref{equ:choice-q-l}, there exists a constant $C_1=C_1(\delta,\eta,T_0)=C_1(\delta,\eta)\geqslant 1$ such that
\begin{equation}\label{equ:J-0-compare}
\sigma(J_0)\leqslant  C_1\sigma(I_{n_j}).
\end{equation}
Without loss of generality, we assume that the angle $\beta\in(0,\tfrac{\pi}{3})$ between the ray $L$ and $\T$ is small enough such that the half hyperbolic neighborhood $H_d(J_0)$ is simply connected and $Z \subset H_d(J_0)$. Let $N := n_{j+T_0} - n_j+1$. For $k = 0,1, \cdots, N$, we consider the $(\delta,\eta)$-admissible sequence of pullbacks of $J_0$ (which is well-defined if $\sigma(I_0)$ is small) such that the pullback $J_k \to J_{k+1}$ is induced by $I_{n_j+ k} \to I_{n_j+k+1}$ and satisfies $I_{n_j+k} \subset J_k$ for all $1\leqslant  k\leqslant  N$.

By Lemma \ref{lem:crit-pull-enhance} and \eqref{equ:J-0-compare}, the number of critical predecessors in the sequence $\{J_k: 0 \leqslant k \leqslant N\}$ is bounded above by a constant $C_2=C_2(\delta,\eta,T_0)=C_2(\delta,\eta)\geqslant 1$.
By a similar arguments to \eqref{equ:W-p} and \eqref{equ:B-k-m}, there exists $1\leqslant  p\leqslant  C_2$ such that for any $z\in Z$ we have
\begin{equation}
w=\Phi_{n_{j+T_0}} \circ \cdots \circ \Phi_{n_j+1}\circ \Phi_{n_j}(z)\in W_p:=H_d(J_{N})\cup \bigcup_{k=1}^p B_{N}^k,
\end{equation}
where each $B_{N}^k$ with $1\leqslant  k\leqslant  p$ is a Jordan disk in $\Omega$ satisfying
\begin{equation}
B_{N}^k\cap H_d(J_{N})\neq\emptyset \text{\quad and\quad}
\diam_{\Omega}(B_{N}^k)< C_2 K_0.
\end{equation}
This implies that if $\sigma(I_0)$ is small enough, then there is a constant $C_3=C_3(\delta,\eta,d)>1$ such that $w\in W_p\subset H_d(J)$ with $J:=C_3 J_{N}$. Hence
\begin{equation}\label{equ:J-1}
\sigma(J)=C_3\sigma(J_{N})<2C_3\sigma(J_0).
\end{equation}
Since $I_{n_j+N} \subset J_{N}$, we have $I_{n_{j+T_0}+1}\subset J$.

By \eqref{equ:choice-q-l}, there is a number $C_4=C_4(\delta,\eta,d)>1$ such that $\sigma(J_0)< C_4\sigma(I_{n_j})$.
By the definitions of critical and non-critical predecessors, $\sigma(I_{n_j})\leqslant  2^{T_0+1}\sigma(I_{n_{j+T_0}+1})$.
Therefore, there is a constant $C_0=C_0(\delta,\eta,d)>1$ such that $\sigma(J)<C_0\sigma(I_{n_{j+T_0}+1})$.
This implies that Part (a) holds.

\medskip
\textbf{Case 2}: Suppose $z \in X$.  Then by Lemma \ref{lemma:Pe}, the map $\Phi_{n_j}$ contracts the hyperbolic metric at $z$ by a definite amount. In particular, there is some $\mu_1=\mu_1(\beta)\in (0,1)$ which depends only on the angle $\beta$ formed by $L$ and $\T$ such that
\begin{equation}\label{equ:mu-1}
\rho_\Omega(\Phi_{n_j}(z)) < \mu_1\,\rho_\Omega(z)|G'(\Phi_{n_j}(z))|.
\end{equation}
Then by the Schwarz-Pick Lemma, Part (b) holds for all $z\in X$ with $\mu_0:=\mu_1$.

\medskip
\textbf{Case 3}: Suppose $z \in Y$. We shall use finitely many half hyperbolic neighborhoods (see \eqref{equ:Y-cover}) to cover $Y$ and prove that for each $z$ in these half hyperbolic neighborhoods, there exists one integer $k\in[n_j,n_{j+T_0}]$ such that $\Phi_k$ contracts the hyperbolic metric of $\Omega$ at $z$.

\medskip
\textit{Step 1. Some orbit point of a singular value in $[G^{q_l}(v)$, $G^{q_{l-4}}(v)]$}.
For $n_j+1\leqslant  k\leqslant  n_{j+T_0}+1$, if $v_k=v_k^\circ$ is the singular point associated to $\Phi_k$, then by definition we have $v_k^\circ \in (1+2\delta)I_k$. According to \eqref{equ:position-G} and \eqref{equ:choice-q-l} we have
\begin{equation}\label{equ:G-pos-circ}
G^{k-n_j}(v_k^\circ)\in (2K_1+1) I_{n_j}\subset \big(G^{q_{l-1}}(v),G^{q_l}(v)\big).
\end{equation}

We claim that there exists an integer $k\in [n_j+1, n_{j+T_0}]$ such that the arc $[G^{q_l}(v)$, $G^{q_{l-4}}(v)]$ contains $G^{k-n_j}(v_k)$, where $v_k$ is a singular point associated to $\Phi_k$.
Indeed, for every $1\leqslant  i\leqslant  T_0=t_0+1$, the arc $(1+2\delta)I_{n_{j+i}}$ contains a critical value $v_{n_{j+i}}\in\MCV\subset\T$. Note that $\sharp\,\MCV=t_0$. By \eqref{equ:position-G} and \eqref{equ:choice-q-l} (see also the right inclusion relationship of \eqref{equ:G-pos-circ}), there exist two integers $k_1',k_2'\in [n_j+1, n_{j+T_0}]$ with $k_1'<k_2'$ and a critical value $v'\in\MCV\cap (1+2\delta)I_{k_1'}\cap (1+2\delta)I_{k_2'}$ such that $[G^{q_{l-1}}(v),G^{q_l}(v)]$ contains $G^{k_1'-n_j}(v')$ and $G^{k_2'-n_j}(v')$.
This implies that $k_2'-k_1'\geqslant q_{l-2}$ (see Section \ref{subsec:return} for the first return properties). Then
\begin{equation}\label{equ:G-k1}
G^{k_1'-n_j+q_{l-2}}(v')\in [G^{q_{l-1}+q_{l-2}}(v),G^{q_l+q_{l-2}}(v)]\subset [G^{q_l}(v),G^{q_{l-4}}(v)]
\end{equation}
and
\begin{equation}
n_j+1\leqslant  k_1'\leqslant  k_1'+q_{l-2}\leqslant  k_2'\leqslant  n_{j+T_0}.
\end{equation}
Set $k:=k_1'+q_{l-2}\in[n_j+1,n_{j+T_0}]$. Then by \eqref{equ:G-k1} we have
\begin{equation}
G^{k-n_j}(v') \in [G^{q_l}(v),G^{q_{l-4}}(v)]\subset[G^{q_l}(v),x_0).
\end{equation}
By the definition of $\widetilde{r}_0$ in \eqref{equ:r0-pri} and note that $\sigma\big([x_0',x_0]\big)<r_1\leqslant  \widetilde{r}_0$, we have
\begin{equation}
\big\{G^{k-n_j}(\widetilde{v}):\,\widetilde{v}\in\MCV\big\}\cap [x_0',x_0]=\{G^{k-n_j}(v')\}\subset[G^{q_l}(v),x_0).
\end{equation}
By \eqref{equ:position-G} and \eqref{equ:choice-q-l}, it follows that $v_k:=v'$ is a singular point associated to $\Phi_k$.

\medskip
\textit{Step 2. A decomposition of $[G^{q_{l-4}}(v), x_0]$ by the orbits of singular points}.
By the claim above there exists a least integer $k_1 \in[n_j+1,n_{j+T_0}]$ such that
\begin{equation}
x_1 := G^{k_1-n_j}(v_{k_1})\in [G^{q_l}(v),x_0),
\end{equation}
where $v_{k_1}$ is a singular point associated to $\Phi_{k_1}$.
If $x_1\in[G^{q_l}(v),G^{q_{l-4}}(v)]$, we stop. Otherwise, there exists a least integer $k_2 \in[ k_1+1,n_{j+T_0}]$ such that
\begin{equation}
x_2 := G^{k_2-n_j}(v_{k_2})\in [G^{q_l}(v),x_1),
\end{equation}
where $v_{k_2}$ is a singular point associated to $\Phi_{k_2}$.
Repeating the procedure, we obtain two finite sequences:
\begin{equation}\label{equ:m}
x_m< \cdots< x_2< x_1< x_0 \text{\quad and\quad} n_j+1\leqslant  k_1<k_2<\cdots<k_m\leqslant  n_{j+T_0},
\end{equation}
where $x_i:=G^{k_i-n_j}(v_{k_i})\in(G^{q_{l-4}}(v),x_{i-1})$ for all $1\leqslant  i<m$ with $m\geqslant 1$ and $x_m:=G^{k_m-n_j}(v_{k_m})\in[G^{q_l}(v),G^{q_{l-4}}(v)]$. Moreover, $v_{k_i}$ is a singular point associated to $\Phi_{k_i}$ for every $1\leqslant  i\leqslant  m$.

Let $\Xi := \{x_0, x_1, \cdots, x_m\}$ and $h \geqslant 1$ be the least integer such that $x_0\in [G^{q_h}(v)$, $G^{q_{h-2}}(v)]$. Without loss of generality, we assume that $h$ and $l$ are even and $l-h\geqslant 8$ is also even (decreasing $\sigma(I_0)$ if necessary). Then the union of all
\begin{equation}\label{equ:interval}
[G^{q_{2k}}(v),G^{q_{2k-2}}(v)] \text{ with } h +4\leqslant 2k \leqslant l-4 \text{\quad and\quad} [G^{q_{h+2}}(v),x_0]
\end{equation}
covers $[G^{q_{l-4}}(v),x_0]$.  Each of these intervals is contained in a smallest interval with end points in $\Xi$, say $[x_{l_k}, x_{l_k'}]$ with $m\geqslant l_k>l_k'\geqslant 0$. In particular, the interval $[G^{q_{h+2}}(v),x_0]$ is contained in some $[x_{l_k}, x_{l_k'}]=[x_{l_k}, x_0]$ with $2k=h+2$.

\medskip
\textit{Step 3. Covering $Y$ by half hyperbolic neighborhoods}.
From the fact that the rotation number $\alpha$ of $G$ is of bounded type and that the number of the critical points on $\T$ is bounded, it follows that there is a uniform positive integer $N_0$, which is independent of $k$, such that each of the intervals in \eqref{equ:interval} contains at most $N_0$ points in $\Xi$. Let $Y_k$ denote the closed trapezoid in $\C\setminus\overline{\D}$ which is bounded by the ray $L$, the interval $[x_{l_k}, x_{l_k'}]$, and the two vertical straight segments passing through $x_{l_k}$ and $x_{l_k'}$. Then the interval $[x_{l_k}, x_{l_k'}]$ contains at most $N_0+2$ points in $\Xi$. See Figure \ref{Fig_neigh-Y-k}.

\begin{figure}[!htpb]
  \setlength{\unitlength}{1mm}
  \centering
  \includegraphics[width=0.8\textwidth]{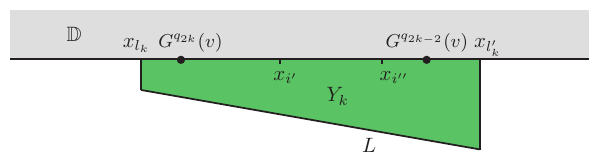}
  \caption{The trapezoid $Y_k$ with some marked boundary points.}
  \label{Fig_neigh-Y-k}
\end{figure}

By Lemma \ref{lemma:hn}, there exist $\widetilde{r}_2=\widetilde{r}_2(N_0)>0$, $\omega=\omega(N_0)\in(0,1)$ and $\beta_0=\beta_0(N_0)\in(0,\pi)$ such that for every $I_k'=(x_{l_k},x_{l_k'})\subset \T$ with $|I_k'|< \widetilde{r}_2$, the following two statements hold:
\begin{itemize}
\item $H^{\beta_0}\big((x_i, x_{i-1})\big)$ is a Jordan disk for all $l_k' +1\leqslant  i\leqslant  l_k$; and
\item $\overline{S_{\omega}(I_k')}\setminus\T\subset \bigcup_{i=l_k' +1}^{l_k} H^{\beta_0}\big((x_i, x_{i-1})\big)$.
\end{itemize}

Since the rotation number $\alpha$ is of bounded type, it follows that there exists $C'>1$ which is independent of $k$ such that the dynamical lengths of $(v,G^{q_{2k}}(v))$ and $(G^{q_{2k}}(v),G^{q_{2k-2}}(v))$ are $C'$-comparable. By Lemma \ref{lem:comparable}, this implies that their Euclidean lengths are $C''$-comparable, where $C''>1$ is a number which is also independent of $k$.
Decreasing the angle $\beta\in(0,\frac{\pi}{3})$ between the ray $L$ and $\T$ to a new angle $\beta=\beta(N_0)\in(0,\frac{\pi}{3})$ if necessary, one can guarantee that $Y_k\subset S_{\omega}\big(I_k'\big)$ and hence $Y_k\subset \bigcup_{i=l_k' +1}^{l_k}H^{\beta_0}\big((x_i, x_{i-1})\big)$ for all $k$.
This implies that
\begin{equation}\label{equ:Y-cover}
Y\subset\bigcup_{i=1}^m H^{\beta_0}\big((x_i, x_{i-1})\big).
\end{equation}
By \eqref{equ:H-beta}, we have $H_{d_0}\big((x_i, x_{i-1})\big)=H^{\beta_0}\big((x_i, x_{i-1})\big)$ for every $1\leqslant  i\leqslant  m$, where $d_0=\log\cot(\frac{\beta_0}{4})$.

In the following, we fix $d:=d_0$. Decreasing $\sigma(I_0)$ and $r_1=r_1(d_0)$ in Lemma \ref{lemma:swz} (see also the definition of $s_0$ in \eqref{equ:D-n-j}) if necessary, we assume that $H_{\tilde{d}}(I)$ is simply connected for any arc $I\subset\T$ with $\min\{|I|,\sigma(I)\}\leqslant \sigma\big((x_0',x_0)\big)$ and any $\tilde{d}\in(0,d_0+1]$.

\medskip
\textit{Step 4. Contraction with respect to the hyperbolic metric}.
For every $n_j\leqslant  k\leqslant  n_{j+T_0}$, we denote
\begin{equation}\label{equ:D-k+1}
D_{k+1}:=\Phi_{k} \circ \cdots \circ \Phi_{n_j+1}\circ \Phi_{n_j}(D_{n_j}^{s_0}).
\end{equation}
Then $D_{k+1}$ is a simply connected domain contained in $\Omega$.
We have the following two cases (I) and (II).

\medskip
\textbf{(I)} Suppose there is a least integer $k_1'\in[n_j+1,k_1]$ such that $\big(G^{q_{l-1}}(v),G^{q_l}(v)\big)$ contains
$x_1':=G^{k_1'-n_j}(v_{k_1'}^+)$ or $G^{k_1'-n_j}(v_{k_1'}^\circ)$. Note that we don't need to consider the forward orbits of singular points in $[x_0',G^{q_{l-1}}(v)]$ since we have assumed that $v=v_{n_j}=v_{n_j}^-$.
For every $n_j\leqslant  k\leqslant  k_1'-1$, we denote
\begin{equation}
P_{k+1,1}:=\Phi_{k}\circ \cdots\circ \Phi_{n_j+1}\circ \Phi_{n_j}\big(P_1\big),
\end{equation}
where $P_1:=H_{d_0}\big((G^{q_{l-4}}(v),x_0)\big)$.
Let $J_1' \subset \T$ be the arc satisfying $G^{k_1'-n_j}(J_1') = (G^{q_{l-4}}(v), x_0)$. By the definition of $k_1'$, it follows that for every $n_j\leqslant  k\leqslant  k_1'-1$, $D_{k+1}\cap\T$ and $\partial P_{k+1,1}\cap\T$ are proper subarcs of $\T$ satisfying
\begin{equation}
\partial D_{k+1}\cap\T \supset  I_{k+1}\cup G^{k_1'-1-k}(J_1') \text{\quad and\quad}
\partial P_{k+1,1}\cap\T = G^{k_1'-1-k}(\overline{J_1'}).
\end{equation}
For $z\in Y\setminus\T\subset P_1$, we have $w_1':=\Phi_{k_1'-1}\circ\cdots\circ\Phi_{n_j+1}\circ\Phi_{n_j}(z)\in P_{k_1',1}\setminus\T\subset H_{d_0+1}(J_1')$ by Lemma \ref{lemma:swz-2}(b).

Let $\widetilde{J}_1\subset\T$ be the arc containing $J_1'$ such that $G^{k_1'-n_j}(\widetilde{J}_1) = (x_1', x_0)$.
Note that $\Phi_{k_1'}$ is defined in the simply connected domain $D_{k_1}\cup P_{k_1',1}$ by analytic continuation of $\Phi_{k_1'}|_{I_{k_1'}}$.
Since the left end point of $\widetilde{J}_1$ is $v_{k_1'}^+$ or $v_{k_1'}^\circ$, by Lemma \ref{lemma:Pe} there exists a constant $\mu_2=\mu_2(d_0)\in(0,1)$ depending only on $d_0$ such that
\begin{equation}\label{equ:mu-0-first}
\rho_\Omega(\Phi_{k_1'}(w_1')) <\mu_2\,\rho_\Omega(w_1')|G'(\Phi_{k_1'}(w_1'))| .
\end{equation}
Hence Part (b) is proved for $z\in Y$ with $\mu_0:=\mu_2$ by the Schwarz-Pick Lemma.

\medskip
\textbf{(II)} Suppose for any $k\in[n_j+1,k_1]$, the arc $\big(G^{q_{l-1}}(v),G^{q_l}(v)\big)$ contains neither
$G^{k-n_j}(v_k^+)$ nor $G^{k-n_j}(v_k^\circ)$.
For every $n_j\leqslant  k\leqslant  k_1-1$, we denote
\begin{equation}
\widetilde{P}_{k+1,1}:=\Phi_{k}\circ \cdots\circ \Phi_{n_j+1}\circ \Phi_{n_j}\big(\widetilde{P}_1\big),
\end{equation}
where $\widetilde{P}_1:=H_{d_0}\big((x_1,x_0)\big)$.
Let $J_1 \subset \T$ be the arc satisfying $G^{k_1-n_j}(J_1) = (x_1, x_0)$. Similar to Case (I), for every $n_j\leqslant  k\leqslant  k_1-1$, $D_{k+1}\cap\T$ and $\partial \widetilde{P}_{k+1,1}\cap\T$ are proper subarcs of $\T$ satisfying
\begin{equation}
\partial D_{k+1}\cap\T \supset I_{k+1}\cup G^{k_1-1-k}(J_1) \text{\quad and\quad}
\partial \widetilde{P}_{k+1,1}\cap\T =  G^{k_1-1-k}(\overline{J}_1).
\end{equation}
For $z\in \widetilde{P}_1$, we have $w_1:=\Phi_{k_1-1}\circ\cdots\circ\Phi_{n_j+1}\circ\Phi_{n_j}(z)\in \widetilde{P}_{k_1,1}\subset H_{d_0+1}(J_1)$ by Lemma \ref{lemma:swz-2}(b).

Note that the left end point of $J_1$ is $v_{k_1}=v_{k_1}^+$ (it cannot be $v_{k_1}^\circ$ by \eqref{equ:G-pos-circ} and cannot be $v_{k_1}^-$ by \eqref{equ:position-G} and \eqref{equ:choice-q-l}). Still by Lemma \ref{lemma:Pe} we have
\begin{equation}
\rho_\Omega(\Phi_{k_1}(w_1)) < \mu_2\,\rho_\Omega(w_1)|G'(\Phi_{k_1}(w_1))|,
\end{equation}
where $\mu_2\in(0,1)$ is the same number as that in \eqref{equ:mu-0-first}.
Hence Part (b) is proved for $z\in \widetilde{P}_1=H_{d_0}\big((x_1,x_0)\big)$ with $\mu_0=\mu_2$ by the Schwarz-Pick Lemma. In particular, if $m=1$, then Part (b) is proved for $z\in Y$.

\medskip
In the following we assume that we are in Case (II) and $m\geqslant 2$. We again have two subcases (II-i) and (II-ii).

\medskip
\textbf{(II-i)} Suppose there is a least integer $k_2'\in[k_1+1,k_2]$ such that $\big(G^{q_{l-1}}(v),G^{q_l}(v)\big)$ contains
$x_2':=G^{k_2'-n_j}(v_{k_2'}^+)$ or $G^{k_2'-n_j}(v_{k_2'}^\circ)$.
For every $n_j\leqslant  k\leqslant  k_2'-1$, we denote
\begin{equation}
P_{k+1,2}:=\Phi_{k}\circ \cdots\circ \Phi_{n_j+1}\circ \Phi_{n_j}\big(P_2\big),
\end{equation}
where $P_2:=H_{d_0}\big((G^{q_{l-4}}(v),x_1)\big)$.
Let $J_2' \subset \T$ be the arc satisfying $G^{k_2'-n_j}(J_2') = (G^{q_{l-4}}(v), x_1)$. By the definition of $k_2'$, it follows that for every $n_j\leqslant  k\leqslant  k_2'-1$, $D_{k+1}\cap\T$ and $\partial P_{k+1,2}\cap\T$ are proper subarcs of $\T$ satisfying
\begin{equation}
\partial D_{k+1}\cap\T \supset  I_{k+1}\cup G^{k_2'-1-k}(J_2') \text{\quad and\quad}
\partial P_{k+1,2}\cap\T = G^{k_2'-1-k}(\overline{J_2'}).
\end{equation}
For $z\in P_2$, we have $w_2':=\Phi_{k_2'-1}\circ\cdots\circ\Phi_{n_j+1}\circ\Phi_{n_j}(z)\in P_{k_2',2}\subset H_{d_0+1}(J_2')$ by Lemma \ref{lemma:swz-2}(b).

Let $\widetilde{J}_2\subset\T$ be the arc containing $J_2'$ such that $G^{k_2'-n_j}(\widetilde{J}_2) = (x_2', x_1)$.
Similar to Case (I), since the left end point of $\widetilde{J}_2$ is $v_{k_2'}^+$ or $v_{k_2'}^\circ$, by Lemma \ref{lemma:Pe} we have
\begin{equation}
\rho_\Omega(\Phi_{k_2'}(w_2')) < \mu_2\,\rho_\Omega(w_2')|G'(\Phi_{k_2'}(w_2'))|,
\end{equation}
where $\mu_2\in(0,1)$ is the same number as that in \eqref{equ:mu-0-first}.
Note that $Y\subset P_2\cup \widetilde{P}_1=H_{d_0}\big((G^{q_{l-4}}(v),x_1)\big)\cup H_{d_0}\big((x_1,x_0)\big)$.
Hence Part (b) is proved for $z\in Y$ with $\mu_0=\mu_2$ by the Schwarz-Pick Lemma.

\medskip
\textbf{(II-ii)} Suppose for any $k\in[k_1+1,k_2]$, the arc $\big(G^{q_{l-1}}(v),G^{q_l}(v)\big)$ contains neither
$G^{k-n_j}(v_k^+)$ nor $G^{k-n_j}(v_k^\circ)$.
For every $n_j\leqslant  k\leqslant  k_2-1$, we denote
\begin{equation}
\widetilde{P}_{k+1,2}:=\Phi_{k}\circ \cdots\circ \Phi_{n_j+1}\circ \Phi_{n_j}\big(\widetilde{P}_2\big),
\end{equation}
where $\widetilde{P}_2:=H_{d_0}\big((x_2,x_1)\big)$.
Let $J_2 \subset \T$ be the arc satisfying $G^{k_2-n_j}(J_2) = (x_2, x_1)$. Similarly, for every $n_j\leqslant  k\leqslant  k_2-1$, $D_{k+1}\cap\T$ and $\partial \widetilde{P}_{k+1,2}\cap\T$ are proper subarcs of $\T$ satisfying
\begin{equation}
\partial D_{k+1}\cap\T \supset I_{k+1}\cup G^{k_2-1-k}(J_2) \text{\quad and\quad}
\partial \widetilde{P}_{k+1,2}\cap\T =  G^{k_2-1-k}(\overline{J}_2).
\end{equation}
For $z\in \widetilde{P}_2$, we have $w_2:=\Phi_{k_2-1}\circ\cdots\circ\Phi_{n_j+1}\circ\Phi_{n_j}(z)\in \widetilde{P}_{k_2,2}\subset H_{d_0+1}(J_2)$ by Lemma \ref{lemma:swz-2}(b).

Note that the left end point of $J_2$ is $v_{k_2}=v_{k_2}^+$. Still by Lemma \ref{lemma:Pe} we have
\begin{equation}
\rho_\Omega(\Phi_{k_2}(w_2)) < \mu_2\,\rho_\Omega(w_2)|G'(\Phi_{k_2}(w_2))|,
\end{equation}
where $\mu_2\in(0,1)$ is the same number as that in \eqref{equ:mu-0-first}.
Then Part (b) is proved for $z\in \widetilde{P}_2\cup \widetilde{P}_1=H_{d_0}\big((x_2,x_1)\big)\cup H_{d_0}\big((x_1,x_0)\big)$ with $\mu_0=\mu_2$ by the Schwarz-Pick Lemma. In particular, if $m=2$, then Part (b) is proved for $z\in Y$.

\medskip
Next assume that we are in Case (II-ii) and $m\geqslant 3$. Inductively on $m$ as Cases (II-i) and (II-ii),  Part (b) holds by the Schwarz-Pick Lemma for all $z\in Y$ with the constant $\mu_2=\mu_2(d_0)\in (0,1)$ depending only on $d_0$ since $Y$ is covered by the union of finitely many half hyperbolic neighborhoods $H_{d_0}\big((x_i, x_{i-1})\big)$, where $1 \leqslant i \leqslant m$ (see \eqref{equ:Y-cover}). Based on \eqref{equ:mu-1}, we set $\mu_0:=\max\{\mu_1,\mu_2\}<1$.
The proof is complete.
\end{proof}

The two conclusions in Proposition \ref{prop:three special regions} are not mutually exclusive. For example, when $v_{n_j}=v_{n_j}^\circ$, the points in $Z$ also satisfy Part (b).
In the rest of this paper, we fix a large number $d>1$ such that Proposition \ref{prop:three special regions} holds.

\section{Proof of the Main Lemma$'$} \label{sec:key-thm}

\subsection{Improved $(\delta,\eta)$-admissible sequences}\label{subsec:improv-d-e}

Let $I \subset \T$ be a small arc and $d>1$ a large number fixed before. Let $\Omega$ be the unique component of $\EC \setminus(\MP(G) \cup \overline{\D})$ such that $\T \subset \partial \Omega$ (see \eqref{equ:Omega}). For $\tilde{d}\in[d,d+1]$, we use $\Sigma_{I, \tilde{d}}$ to denote the class consisting of all the sets $F$ such that
$$
F = \bigcup_{k\in\K} B_k,
$$
where $\K$ is an at most countable set, and each $B_k$ is a Jordan disk compactly contained in $\Omega$ satisfying $B_k\cap H_{\tilde{d}}(I)\neq\emptyset$.
Define
\begin{equation}
\Upsilon_\Omega(H_{\tilde{d}}(I), F) :=\sup_{k\in\K} \big\{ \diam_\Omega(B_k) \big\}.
\end{equation}

Let $\{I_n\}_{n\geqslant 0}$ be a $(\delta,\eta)$-admissible sequence.
For $n \geqslant 0$, let $\Phi_n$ be the inverse branch of $G$ associated to the pullback $I_n\to I_{n+1}$, which is defined in any simply connected domain in $\EC\setminus(\overline{\D}\cup\MP(G))$ containing $D_n^{s_0}$ (see \eqref{equ:D-n-j}).
Let $r_5=r_5(\delta,\eta,d)>0$ be introduced in Proposition \ref{prop:three special regions}.

\begin{lem}\label{lemma:improved sequence}
There exist positive numbers $r_6\in(0,r_5]$, $K'$, $K$, $T_2$ depending only on $\delta,\eta,d$, such that for any $(\delta,\eta)$-admissible sequence $\{I_n\}_{n\geqslant 0}$  with $\sigma(I_0) < r_6$ and any $F_0 \in \Sigma_{I_0, d}$ with $\Upsilon_\Omega(H_d(I_0), F_0) < K' $ and $H_d(I_0)\cup F_0\subset D_0^{s_0}$, there exist
\begin{itemize}
\item $\ell\geqslant 1$ and $F_i \in \Sigma_{I_i, d_i}$ with $d_i\in[d,d+1]$, where $1 \leqslant i\leqslant \ell$; and
\item $F \in \Sigma_{J, d}$ for a subarc $J \subset \T$ containing $I_\ell$ with $\sigma(J) <\sigma(I_0)/2$,
\end{itemize}
such that
\begin{itemize}
\item $\{I_i:0\leqslant  i\leqslant  \ell\}$ contains exactly $T_2$ critical predecessors;
\item $\Phi_{i-1}\circ\cdots\circ\Phi_1\circ\Phi_0\big(H_d(I_0)\cup F_0\big)\subset H_{d_i}(I_i)\cup F_i\subset D_i^{s_0}$ for all $1\leqslant  i\leqslant  \ell$;
\item $\Upsilon_\Omega(H_{d_i}(I_i), F_i)  < K$ for all $1\leqslant  i\leqslant  \ell-1$; and
\item $H_{d_\ell}(I_{\ell}) \cup F_{\ell} \subset H_d(J) \cup F$ and $\Upsilon_\Omega(H_d(J), F)  < K'$.
\end{itemize}
\end{lem}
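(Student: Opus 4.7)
The plan is to construct the improved sequence inductively by pulling $H_d(I_0)\cup F_0$ back along $\Phi_0,\Phi_1,\ldots$ and stopping once exactly $T_1=M(T_0+1)$ critical successors have occurred, with $M$ determined below. Each individual step is controlled by one of two earlier lemmas. For a non-critical step, Lemma \ref{lemma:swz-2}(b) yields $\Phi_i(H_{d_i}(I_i))\subset H_{d_{i+1}}(I_{i+1})$ with $d\le d_{i+1}<2d$, preserving the half-neighborhood structure. For a critical step, Lemma \ref{lem:crit-pullback} produces a new Jordan disk $B_i$ of $\Omega$-diameter less than $K_0$ with $\Phi_i(H_{d_i}(I_i))\subset H_d(I_{i+1})\cup B_i$ and resets $d_{i+1}=d$. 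Components of $F_i$ avoid $\MP(G)$, so by Lemma \ref{lem:take-pre} each pulls back as a single Jordan disk, and Schwarz-Pick prevents its $\Omega$-diameter from growing; the anchoring $\overline{B}_k\cap H_{d_{i+1}}(I_{i+1})\neq\emptyset$ is preserved because $\Phi_i$ sends the anchor $H_{d_i}(I_i)$ into $H_{d_{i+1}}(I_{i+1})$. Lemma \ref{lem:observation} guarantees that all these compositions remain single-valued and stay inside $D_{i+1}^{s_0}$ provided $r_3=r_3(T_1)$ is chosen small enough.

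The key ingredient is Proposition \ref{prop:three special regions}, applied at the end of every successive block of $T_0+1$ critical successors. For the $k$-th block it splits the preimage of $D_{n_{j_k}}^{s_0}$ into a contracting part (case (a), where the composite $\Phi_{n_{j_k+T_0}}\circ\cdots\circ\Phi_{n_{j_k}}$ shrinks $\rho_\Omega$ by a universal factor $\mu_0<1$) and an absorbing part (case (b), whose image is contained in $H_d(J^{(k)})$ for some arc $J^{(k)}\supset I_{n_{j_k+T_0}+1}$ with $\sigma(J^{(k)})<C_0\sigma(I_{n_{j_k+T_0}+1})$). Using Lemma \ref{lemma:shorten}(a) we choose $M$ large enough that after $M$ blocks $\sigma(I_{n_{j_M+T_0}+1})<\sigma(I_0)/(2C_0)$; letting $\ell$ be the step just after the $M$-th block and setting $J:=J^{(M)}$ from the last application of Proposition \ref{prop:three special regions}, we have $\sigma(J)<C_0\sigma(I_\ell)<\sigma(I_0)/2$ as required. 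Define $F$ to be the collection of components of $F_\ell$ not already contained in $H_d(J)$, augmented by any residual pieces of the final absorbing part that fall outside the core.

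The intermediate bound $\Upsilon_\Omega(H_{d_i}(I_i),F_i)<\widetilde K$ is essentially automatic: every disk in $F_i$ either descends from $F_0$ (of $\Omega$-diameter less than $K'$) under Schwarz-Pick contractions or was born at a critical step (of diameter less than $K_0$), so $\widetilde K:=\max\{K',K_0\}$ suffices. The principal obstacle is establishing the matching output bound $\Upsilon_\Omega(H_d(J),F)<K'$: over $T_1$ critical steps up to $T_1$ freshly minted disks of diameter near $K_0$ can accumulate in $F_\ell$, so one must leverage the strict $\mu_0$-contractions from case (a) of Proposition \ref{prop:three special regions} across all $M$ blocks to beat this additive creation. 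The resolution is to fix $K'$ first, sufficiently large relative to $K_0$, and then choose $M$ (hence $T_1$) so large that the cumulative $\mu_0^M$-shrinking forces the surviving pieces back below $K'$. This self-consistent matching of $K'$, $\widetilde K$, $T_1$ is precisely what makes the conclusion of the lemma satisfy its own hypothesis, enabling the iterative construction in \S\ref{sec:key-thm}.
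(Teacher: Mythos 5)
There are two genuine gaps. First, your bookkeeping at critical steps is wrong. At a critical pull back, Lemma \ref{lem:crit-pullback} gives $\Phi_i\big(H_{d_i}(I_i)\big)\subset H_d(I_{i+1})\cup B_i$, \emph{not} $\Phi_i\big(H_{d_i}(I_i)\big)\subset H_{d_{i+1}}(I_{i+1})$, so a pulled-back disk $\Phi_i(B_k)$ may touch only the newly created $B_i$ and not $H_d(I_{i+1})$; since membership in $\Sigma_{I_{i+1},d_{i+1}}$ requires each disk's closure to meet $H_{d_{i+1}}(I_{i+1})$, you must merge such a disk with $B_i$, and its $\rho_\Omega$-diameter then grows additively by up to $K_0$ at every critical successor. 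Hence $\widetilde K=\max\{K',K_0\}$ is not ``essentially automatic'' and is false as stated; the correct intermediate bound is of the form $K'+(\text{number of critical successors})\cdot K_0$, which is exactly how the paper sets $\widetilde K=K'+k_0T_0'K_0+T_0K_0$.

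Second, and more seriously, your plan to apply Proposition \ref{prop:three special regions} at the end of \emph{every} block of $T_0+1$ critical successors and to win by cumulative $\mu_0^M$-contraction does not fit the required structure. At an intermediate block, case (b) of the proposition sends part of the configuration into $H_d(J^{(k)})$, where $J^{(k)}$ is an arc roughly $C_0$ times longer than the current admissible interval; this material is neither contained in the subsequent $H_{d_i}(I_i)$ nor recordable as a disk of $F_i$ (a half hyperbolic neighborhood touches $\T\subset\partial\Omega$ and has unbounded $\rho_\Omega$-diameter), and its further pull backs follow the admissible dynamics of $J^{(k)}$ rather than of $\{I_n\}$. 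So the asserted containments $\Phi_{i-1}\circ\cdots\circ\Phi_0\big(H_d(I_0)\cup F_0\big)\subset H_{d_i}(I_i)\cup F_i$ break down right after the first intermediate block, and the promised $\mu_0^M$ decay of the old disks is never available (disks meeting the absorbing region are not contracted at all). The paper's proof avoids this by invoking Proposition \ref{prop:three special regions} exactly once, on the final block from $n_j$ to $\ell=n_{j+T_0}+1$: the first $k_0T_0'$ critical successors serve only to shrink the dynamical length (Lemma \ref{lemma:shorten}) so that $\sigma(J)<C_0\sigma(I_\ell)<\sigma(I_0)/2$, the disks created inside the final block are swallowed by enlarging $J$ itself, and the additive accumulation is beaten by a \emph{single} $\mu_0$-contraction through the self-consistent choice $K'=\mu_0k_0T_0'K_0/(1-\mu_0)$, i.e.\ $\mu_0\big(K'+k_0T_0'K_0\big)=K'$. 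Your ``fix $K'$ large, then take $M$ large'' step is exactly where this one-shot balancing has to be made precise, and as proposed it does not go through.
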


\begin{proof}
Let $K_0=K_0(\delta,\eta,d) > 0$ and $T_1=T_1(\eta)\geqslant 1$ be introduced in Lemmas \ref{lem:crit-pullback} and \ref{lemma:shorten} respectively. For the constants $C_0=C_0(\delta,\eta,d) > 1$ and $0<\mu_0<1$ introduced in Proposition \ref{prop:three special regions}, let $k_0 \geqslant 3$ be the minimal integer such that
\begin{equation}
(\tfrac{1}{2})^{k_0-2} C_0 < \tfrac{1}{2}.
\end{equation}
Let $T_0=t_0+1$ be introduced in \eqref{equ:T-0}, where $t_0=\sharp\,(\Crit(G)\cap\T)\geqslant 1$. Set
$$
K' := \frac{\mu_0 k_0 T_1K_0}{1 - \mu_0}.
$$

Let $F_0 \in \Sigma_{I_0, d}$ with $\Upsilon_\Omega(H_d(I_0), F_0) < K'$.
We assume that $\sigma(I_0)$ is small enough such that $H_d(I_0)\cup F_0\subset D_0^{s_0}$ (see \eqref{equ:D-n-j}).
Let $j:=k_0T_1+1$ and $n_j$ be the $j$-th integer such that $I_{n_j+1}$ is a critical predecessor of $I_{n_j}$ (see \eqref{equ:crit-seq}).
Based on a similar proof to \eqref{equ:W-p} and \eqref{equ:B-k-m} (decreasing $\sigma(I_0)$ if necessary), by Lemmas \ref{lemma:swz-2}(b), \ref{lem:crit-pullback} and \ref{lemma:shorten}, we obtain $F_i \in \Sigma_{I_i, d_i}$ for $1 \leqslant i \leqslant n_j$ with $d_i\in[d,d+1]$, such that
\begin{itemize}
\item There are exactly $k_0T_1$ critical predecessors in $\{I_i:0\leqslant  i\leqslant  n_j\}$;
\item $\sigma(I_{n_j}) < \sigma(I_0)/2^{k_0-1}$;
\item $\Phi_{i-1}\circ\cdots\circ\Phi_1\circ\Phi_0\big(H_d(I_0)\cup F_0\big)\subset H_{d_i}(I_i)\cup F_i\subset D_i^{s_0}$ for all $1\leqslant  i\leqslant  n_j$; and
\item $\Upsilon_\Omega(H_{d_i}(I_i), F_i) < K'+k_0T_1K_0$ for all $1\leqslant  i\leqslant  n_j$.
\end{itemize}

Considering the pullbacks from $I_{n_j}$ to $I_\ell$ with $\ell:=n_{j+T_0}+1$, by Lemmas \ref{lemma:swz-2}(b), \ref{lem:crit-pullback} and \ref{lemma:shorten} we obtain $F_i \in \Sigma_{I_i, d_i}$ for $n_j+1 \leqslant i \leqslant \ell$ with $d_i\in[d,d+1]$, such that
\begin{itemize}
\item There are exactly $T_2$ critical predecessors in $\{I_i:0\leqslant  i\leqslant  \ell\}$, where
\begin{equation}
T_2 := k_0 T_1+ T_0+1;
\end{equation}
\item $\sigma(I_\ell) < 2\sigma(I_{n_j})<\sigma(I_0)/2^{k_0-2}$;
\item $\Phi_{i-1}\circ\cdots\circ\Phi_1\circ\Phi_0\big(H_d(I_0)\cup F_0\big)\subset H_{d_i}(I_i)\cup F_i\subset D_i^{s_0}$ for all $1\leqslant  i\leqslant  \ell$; and
\item $\Upsilon_\Omega(H_{d_i}(I_i), F_i) < K$ for all $1\leqslant  i\leqslant  \ell-1$, where
\begin{equation}
K:=(K'+k_0T_1K_0)+T_0 K_0.
\end{equation}
\end{itemize}
By Lemma \ref{lem:crit-pullback}, $d_{\ell}$ is chosen to be $d$.

Let $D_{n_j}^{s_0}$ be the set defined in \eqref{equ:D-n-j}. Without loss of generality, we assume that $\sigma(I_0)$ is small enough such that
\begin{itemize}
  \item $H_{d_{n_j}}(I_{n_j})  \cup F_{n_j}  \subset D_{n_j}^{s_0}$; and
  \item The shortest geodesic curve connecting any different points $z_1$, $z_2$ in every Jordan disk of $F_{n_j}$ with respect to $\rho_\Omega(z)|dz|$ is contained in $D_{n_j}^{s_0}$.
\end{itemize}
By Proposition \ref{prop:three special regions} (see also Figure \ref{Fig_neigh-XYZ}), there exists a closed triangle $Z\subset D_{n_j}^{s_0}$ such that
\begin{equation}
\widetilde{\Phi}:=\Phi_{\ell-1}\circ\cdots\circ\Phi_{n_j+1}\circ\Phi_{n_j}
\end{equation}
contracts the hyperbolic metric $\rho_\Omega(z)|dz|$ in $D_{n_j}^{s_0}\setminus Z$ strictly and $\widetilde{\Phi}(Z)\subset H_d(J)$, where $J\subset\T$ is an arc containing $I_\ell$ which satisfies
\begin{equation}
\sigma(J)<C_0\sigma(I_\ell)<C_0\sigma(I_0)/2^{k_0-2}<\sigma(I_0)/2.
\end{equation}
If $z\in H_{d_{n_j}}(I_{n_j})$, then there exists $F_\ell'\in\Sigma_{I_\ell,d}$ such that
\begin{equation}\label{equ:bd-1}
\widetilde{\Phi}(z)\in H_d(I_\ell)\cup F_\ell', \text{\quad where }\Upsilon_\Omega(H_d(I_\ell), F_\ell')<(T_0+1)K_0.
\end{equation}
By the construction of $J$, increasing the number $C_3=C_3(\delta,\eta,d)>1$ in \eqref{equ:J-1} if necessary, without loss of generality by \eqref{equ:bd-1} we assume that
\begin{equation}\label{equ:posi-1}
\widetilde{\Phi}\big(H_{d_{n_j}}(I_{n_j})\big)\subset H_d(I_\ell)\cup F_\ell'\subset H_d(J).
\end{equation}

Note that $\Upsilon_\Omega(H_{d_{n_j}}(I_{n_j}), F_{n_j}) < K'+k_0T_1K_0$ and $F_{n_j}\setminus Z$ can be written as
\begin{equation}
F_{n_j}\setminus Z=\bigcup_{k\in\K} B_k,
\end{equation}
where $\K\neq\emptyset$ is an at most countable set, and each $B_k$ is a Jordan disk compactly contained in $\Omega$ satisfying
\begin{equation}
\overline{B}_k\cap (\partial Z\setminus\T) \neq\emptyset \text{\quad and\quad}
\diam_\Omega(B_k)<K'+k_0T_1K_0.
\end{equation}
Since $\widetilde{\Phi}\big(H_{d_{n_j}}(I_{n_j})\cup Z\big)\subset H_d(J)$, we have $\widetilde{\Phi}(B_k)\cap H_d(J)\neq\emptyset$ for all $k\in\K$.
Define $F:=\bigcup_{k\in\K}\widetilde{\Phi}(B_k)$. Then $F\in\Sigma_{J,d}$ and we have
\begin{equation}
\Upsilon_\Omega(H_d(J), F) <\mu_0(K'+k_0T_1K_0)=K'.
\end{equation}
The proof is complete.
\end{proof}

To every $(\delta,\eta)$-admissible sequence $\{I_n\}_{n\geqslant 0}$, we associate a sequence of triples $\{(J_n,F_n,d_n)\}_{n\geqslant 0}$ as following.
Set $F_0 =\emptyset$. Let $\ell \geqslant 1$, $F_i \in \Sigma_{I_i, d_i}$ for $1 \leqslant i\leqslant \ell-1$ with $d_i\in[d,d+1]$, $J \subset \T$ and $F\subset \Sigma_{J, d}$ be guaranteed by Lemma \ref{lemma:improved sequence}.
Define
\begin{equation}
\begin{split}
J_i &:= I_i \text{\quad for }0 \leqslant i \leqslant \ell-1  \text{\quad and} \\
J_{\ell} &:= J, \quad F_{\ell} := F \text{\quad and\quad} d_\ell:=d.
\end{split}
\end{equation}
Let $I_0' := J_{\ell}$ and we define a new $(\delta,\eta)$-admissible sequence $\{I_n' \}_{n\geqslant 0}$ such that
the pullback $I_n' \to I_{n+1}'$ is induced by $I_{n+ \ell} \to I_{n+\ell+1}$ and $I_{n+\ell} \subset I_n'$ for all $n\geqslant 0$.
Denote $F_0' := F_{\ell} $.
Then $\Upsilon_\Omega(H_d(I_0'), F_0') < K' $.

\medskip
By applying Lemma \ref{lemma:improved sequence} to $\{I_n' \}_{n\geqslant 0}$ and $F_0'$, we obtain
$\ell'\geqslant 1$, $F_i' \in \Sigma_{I_i', d_i'}$ for $1 \leqslant i \leqslant \ell'-1$ with $d_i'\in[d,d+1]$, $J' \subset \T$ and $ F' \in \Sigma_{J', d}$.
Define
\begin{align}
J_{\ell+i} &:=I_i', & F_{\ell+i}&:= F_i', & d_{\ell+i}&:=d_i'\text{\quad for }1 \leqslant i \leqslant \ell'-1;  \text{\quad and}\\
J_{\ell+\ell'} &:= J', & F_{\ell+\ell'} &:= F', & d_{\ell+\ell'}&:=d.
\end{align}
Inductively, by taking $I_0'' :=J_{\ell+\ell'}$, $F_0'' := F_{\ell+\ell'}$ and repeating the above process, we thus obtain an \textit{improved $(\delta,\eta)$-admissible sequence} of triples $\{(J_n,F_n,d_n)\}_{n\geqslant 0}$.

\begin{cor}\label{cor:small}
For any $\varepsilon>0$, there exists $\widetilde{r}=\widetilde{r}(\varepsilon)\in(0,r_6]$ such that for any improved $(\delta,\eta)$-admissible sequence $\{(J_n,F_n,d_n)\}_{n\geqslant 0}$ with $\sigma(J_0)=\sigma(I_0) < \widetilde{r}$, we have
$\diam_{\EC}(H_{d_n}(J_n)\cup F_n)<\varepsilon$ for all $n\geqslant 0$.
\end{cor}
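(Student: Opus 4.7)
The plan is to bound $\diam_{\EC}\bigl(H_{\widetilde{d}_n}(\widetilde{I}_n)\cup\widetilde{F}_n\bigr)$ by a constant multiple of $|\widetilde{I}_n|$ independently of $n$, and then use the uniform control of $\sigma(\widetilde{I}_n)$ by $\sigma(\widetilde{I}_0) < \widetilde{r}'$ together with Lemma~\ref{lem:comparable} to make this quantity smaller than $\varepsilon$ by choosing $\widetilde{r}'$ sufficiently small.

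First I would verify $\sigma(\widetilde{I}_n) < 2\sigma(\widetilde{I}_0)$ uniformly in $n\ge 0$. The improved sequence is built by concatenating blocks produced by successive applications of Lemma~\ref{lemma:improved sequence}; at every block-boundary index $\ell_k$ we have $\sigma(\widetilde{I}_{\ell_{k+1}}) < \sigma(\widetilde{I}_{\ell_k})/2$, while within a single block the sub-sequence $\{\widetilde{I}_{\ell_k+j}\}_{0\le j\le\ell_{k+1}-\ell_k}$ is (up to the replacement of the final arc by $J$) a $(\delta,\eta)$-admissible sequence starting at $\widetilde{I}_{\ell_k}$, so Lemma~\ref{lemma:shorten}(b) forces $\sigma(\widetilde{I}_{\ell_k+j}) < 2\sigma(\widetilde{I}_{\ell_k})\le 2\sigma(\widetilde{I}_0)$. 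Lemma~\ref{lem:comparable} then yields $|\widetilde{I}_n| \le 2\lambda(2)\,\widetilde{r}'$ for all $n\ge 0$.

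Next I would estimate the two pieces of $H_{\widetilde{d}_n}(\widetilde{I}_n)\cup\widetilde{F}_n$ in the Euclidean (equivalently spherical, since everything stays in a compact subset of $\C$) metric. For the first piece, since $\widetilde{d}_n\in[d,2d)$, as soon as $|\widetilde{I}_n|<r_1(d)$ Lemma~\ref{lemma:swz} together with \eqref{equ:H-2d} gives $H_{\widetilde{d}_n}(\widetilde{I}_n)\subset\widetilde{H}_{2d}(\widetilde{I}_n)\setminus\overline{\D}$, and the latter region has Euclidean diameter at most $C_1(d)\,|\widetilde{I}_n|$ directly from its explicit description as the lens bounded by two circular arcs meeting $\T$ at the endpoints of $\widetilde{I}_n$ at angle $\tilde\beta(2d)$. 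For each Jordan disk $B\subset\widetilde{F}_n$ we have $\diam_\Omega(B)\le\max\{K',\widetilde{K}\}=\widetilde{K}$, $B$ is compactly contained in $\Omega$, and $\overline{B}\cap H_{\widetilde{d}_n}(\widetilde{I}_n)\ne\emptyset$. Picking $z_0\in\overline{B}\cap H_{\widetilde{d}_n}(\widetilde{I}_n)$, we have $\dist_{\C}(z_0,\T) \le C_1(d)\,|\widetilde{I}_n|$. Invoking property (iii) of the quasi-Blaschke model, namely $\dist_{\EC}(\widetilde{\MP}(G)\setminus\T,\T)>0$, on a Euclidean disk about $z_0$ of $G$-definite radius the only piece of $\partial\Omega$ present is $\T\cup\partial\D$, so $\rho_\Omega$ is comparable there to the hyperbolic metric of $\EC\setminus\overline{\D}$, hence to $1/\dist_{\C}(\cdot,\T)$. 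A Koebe-type distortion argument (in the spirit of the proof of Lemma~\ref{lemma:swz-2}) then bounds the $\rho_\Omega$-ball of radius $\widetilde{K}$ about $z_0$, which contains $B$, by a Euclidean disk about $z_0$ of radius at most $C_2(\widetilde{K},d)\,\dist_{\C}(z_0,\T)\le C_2'(\widetilde{K},d)\,|\widetilde{I}_n|$, provided $|\widetilde{I}_n|$ is small enough that this ball remains inside the aforementioned Euclidean neighborhood of $\T$.

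Combining via the triangle inequality through intersection points $z_0\in\overline{B}\cap H_{\widetilde{d}_n}(\widetilde{I}_n)$ gives
\[
\diam_{\EC}\bigl(H_{\widetilde{d}_n}(\widetilde{I}_n)\cup\widetilde{F}_n\bigr) \le \diam_{\EC}H_{\widetilde{d}_n}(\widetilde{I}_n) + 2\sup_{B\subset\widetilde{F}_n}\diam_{\EC}(B) \le (C_1+2C_2')\,|\widetilde{I}_n| < 2\lambda(2)(C_1+2C_2')\,\widetilde{r}',
\]
so the conclusion follows by setting $\widetilde{r}':=\min\{r_3,\,\varepsilon/[2\lambda(2)(C_1+2C_2')]\}$. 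The main obstacle is the hyperbolic-to-Euclidean estimate for the Jordan disks in $\widetilde{F}_n$: one must use property (iii) to ensure that, uniformly in $n$, the hyperbolic metric $\rho_\Omega$ near the small arc $\widetilde{I}_n$ is comparable to the hyperbolic metric of a one-sided neighborhood of $\T$, thereby enabling a Koebe-style bound on the Euclidean diameter of a hyperbolic $\widetilde{K}$-ball in terms of its distance to $\T$, with constants depending only on $\delta,\eta,d$ and $G$.
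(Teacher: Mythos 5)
Your proposal follows essentially the same route as the paper: decompose the spherical diameter by the triangle inequality through the intersection points $\overline{B}\cap H_{\widetilde{d}_n}(\widetilde{I}_n)$, use the uniform hyperbolic bound $\Upsilon_\Omega<\widetilde{K}$ from Lemma \ref{lemma:improved sequence} for the Jordan disks in $\widetilde{F}_n$, use $\widetilde{d}_n\in[d,2d)$ and $\sigma(\widetilde{I}_n)<2\sigma(I_0)$ from the construction, and conclude smallness when $\sigma(I_0)$ is small. In fact you supply more detail than the paper does: the block-by-block verification of $\sigma(\widetilde{I}_n)<2\sigma(I_0)$ via Lemma \ref{lemma:shorten}(b), and the hyperbolic-to-Euclidean conversion (a hyperbolic ball of radius $\widetilde{K}$ in $\Omega\subset\EC\setminus\overline{\D}$ touching a point at distance $O(|\widetilde{I}_n|)$ from $\T$ has Euclidean diameter $O(|\widetilde{I}_n|)$; one-sided monotonicity $\rho_\Omega\geq\rho_{\EC\setminus\overline{\D}}$ already suffices here), which the paper leaves implicit in its two-line estimate \eqref{equ:diam-1}--\eqref{equ:diam-2}. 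These additions are correct and in the spirit of the intended argument.

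One concrete misstep: Lemma \ref{lem:comparable} does not yield $|\widetilde{I}_n|\le 2\lambda(2)\,\widetilde{r}'$. That lemma only transfers comparability of \emph{adjacent} arcs between the $\sigma$-length and the Euclidean length; there is no linear (Lipschitz) bound $|I|\lesssim\sigma(I)$ in general, since $h^{-1}:\T\to\T$ is merely quasisymmetric (one only gets H\"older-type control). Consequently your closing formula $\widetilde{r}':=\min\{r_3,\varepsilon/[2\lambda(2)(C_1+2C_2')]\}$ is not justified as stated. The fix is immediate and does not change the structure: since $h^{-1}$ is uniformly continuous, $\sigma(\widetilde{I}_n)<2\widetilde{r}'$ implies $|\widetilde{I}_n|<\omega(2\widetilde{r}')$ for the modulus of continuity $\omega$ of $h^{-1}$, and the corollary only asserts the existence of some $\widetilde{r}'(\varepsilon)$, so choose $\widetilde{r}'\in(0,r_3]$ with $(C_1+2C_2')\,\omega(2\widetilde{r}')<\varepsilon$. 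With that replacement the proof is complete.
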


\begin{proof}
For each $n\geqslant 0$, suppose
\begin{equation}
F_n = \bigcup_{k\in \K_n} B_k^n,
\end{equation}
where $\K_n$ is an at most countable set and each $B_k^n$ is a Jordan disk in $\Omega$ satisfying $B_k^n\cap H_{d_n}(J_n)\neq\emptyset$. Then
\begin{equation}\label{equ:diam-1}
\diam_{\EC}\big(H_{d_n}(J_n)\cup F_n\big)\leqslant  \diam_{\EC}\big(H_{d_n}(J_n)\big)
+ 2\sup_{k\in \K_n}\big\{\diam_{\EC}(B_k^n)\big\}.
\end{equation}
By Lemma \ref{lemma:improved sequence}, we have
\begin{equation}\label{equ:diam-2}
\sup_{k\in \K_n}\big\{\diam_{\Omega}(B_k^n)\big\}<K.
\end{equation}

From the construction of $\{(J_n,F_n,d_n)\}_{n\geqslant 0}$ and by Lemma \ref{lemma:improved sequence}, we have $d_n\in[d,d+1]$ and $\sigma(J_n) < 2\sigma(I_0)$ for all $n\geqslant 0$. Hence for given $\varepsilon>0$,  by \eqref{equ:diam-1} and \eqref{equ:diam-2}, if $\sigma(I_0)$ is small enough, then $\diam_{\EC}\big(H_{d_n}(J_n)\cup F_n\big)<\varepsilon$ for all $n\geqslant 0$.
\end{proof}

By definition and Corollary \ref{cor:small}, we have
\begin{equation}
\Phi_{n-1}\circ\cdots\circ\Phi_1\circ\Phi_0\big(H_d(I_0)\big)\subset H_{d_n}(J_n)\cup F_n, \text{\quad for all } n\geqslant 1.
\end{equation}

\subsection{Jumping off preimages} \label{subsec:jump-off}

Let $V_0\subset H_d(I_0)$ be a Jordan disk and $\{V_n\}_{n\geqslant 0}$ be a pullback sequence of $V_0$ under $G$ in $\EC\setminus\overline{\D}$. By Lemma \ref{lem:take-pre}, every $V_n$ is a Jordan disk.
For any given arc $I_0^*\subset\T$ with $\sigma(I_0^*)< r_6$, we consider the family of all the possible $(\delta,\eta)$-admissible sequences:
\begin{equation}
\MI:=
\left\{\tau=\{I_n\}_{n\geqslant 0}
\left|
\begin{array}{l}
\tau \text{ is a } (\delta,\eta) \text{-admissible sequence} \\
\text{beginning with } I_0=I_0^*
\end{array}
\right.
\right\}
\end{equation}
and the corresponding improved $(\delta,\eta)$-admissible sequences $\big\{\tau'=\{(J_n,F_n,d_n)\}_{n\geqslant 0}\big\}$.
Define
\begin{equation}
n(\tau):=\sup\{n\in\N: ~ V_k \subset H_{d_k}(J_k) \cup F_k \text{ for all } 0 \leqslant k \leqslant n\}
\end{equation}
and
\begin{equation}
\chi(I_0^*, \{V_n\}_{n\geqslant 0})  := \sup_{\tau\in\MI} \, \{n(\tau)\}.
\end{equation}

\begin{lem}\label{lem:shrink}
For any $\varepsilon>0$, let $\widetilde{r}=\widetilde{r}(\varepsilon)>0$ be the number in Corollary \ref{cor:small}. Then for any pullback sequence $\{V_n\}_{n\geqslant 0}$ of a Jordan disk $V_0\subset H_d(I_0)$ with $\sigma(I_0) < \widetilde{r}$, we have the following two cases:
\begin{enumerate}
\item If $\chi(I_0,\{V_n\}_{n\geqslant 0}) = +\infty$, then $\diam_{\EC}(V_n) < \varepsilon$ for all $n\geqslant 0$;
\item If $\chi(I_0,\{V_n\}_{n\geqslant 0}) = N<+\infty$, then $\diam_{\EC}(V_n) < \varepsilon$ for all $0\leqslant  n\leqslant  N$.
\end{enumerate}
\end{lem}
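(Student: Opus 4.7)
The plan is to reduce both cases directly to Corollary~\ref{cor:small}. The key point is that once $\sigma(I_0) < \widetilde{r}'(\varepsilon)$, Corollary~\ref{cor:small} furnishes, for every $\tau \in \MI$ beginning with $I_0$, the uniform bound $\diam_{\EC}\bigl(H_{\widetilde{d}_n}(\widetilde{I}_n) \cup \widetilde{F}_n\bigr) < \varepsilon$ for all $n \geq 0$ on the associated improved sequence. My strategy is therefore to exhibit, for each relevant index $n$, some $\tau \in \MI$ whose improved triples cover $V_n$ in the sense $V_n \subset H_{\widetilde{d}_n}(\widetilde{I}_n) \cup \widetilde{F}_n$; the diameter bound on the covering set then forces $\diam_{\EC}(V_n) < \varepsilon$.

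For case (b), I will first observe that the supremum $\chi = N$ is automatically attained. Since $n(\tau) \in \N \cup \{+\infty\}$ and its supremum over $\tau \in \MI$ is the finite integer $N$, there exists $\tau^{\ast} \in \MI$ with $n(\tau^{\ast}) = N$. By definition this means $V_k \subset H_{\widetilde{d}_k}(\widetilde{I}_k) \cup \widetilde{F}_k$ for all $0 \leq k \leq N$, where $\{(\widetilde{I}_k, \widetilde{F}_k, \widetilde{d}_k)\}_{k \geq 0}$ is the improved sequence associated to $\tau^{\ast}$. A single application of Corollary~\ref{cor:small} to this improved sequence then closes case (b).

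For case (a), attaining the sup along a single $\tau$ is not automatic, but the weaker statement I actually need is easier. For every $n \geq 0$, I would pick $\tau^{(n)} \in \MI$ with $n(\tau^{(n)}) \geq n$, which exists because $\chi = +\infty$. Writing the associated improved sequence as $\{(\widetilde{I}_k^{(n)}, \widetilde{F}_k^{(n)}, \widetilde{d}_k^{(n)})\}_{k \geq 0}$, one has $V_n \subset H_{\widetilde{d}_n^{(n)}}(\widetilde{I}_n^{(n)}) \cup \widetilde{F}_n^{(n)}$. Applying Corollary~\ref{cor:small} separately for each $n$ (but with the same $\varepsilon$ and $\widetilde{r}'(\varepsilon)$) yields $\diam_{\EC}(V_n) < \varepsilon$.

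The only subtle point is that in case (a) I do not need a single $\tau$ witnessing the inclusion for every $n$ simultaneously; the argument works precisely because the diameter bound produced by Corollary~\ref{cor:small} is uniform across all $(\delta,\eta)$-admissible sequences starting with $I_0$. Thus the main obstacle here is essentially bookkeeping rather than analysis: one must track that the $\tau^{(n)}$'s may vary with $n$ while the bound remains uniform. All the genuinely hard work --- the contraction near singular points of the inverse branches provided by Proposition~\ref{prop:three special regions}, the control on the hyperbolic diameters of the Jordan disks comprising $\widetilde{F}_n$, and the smallness of $\sigma(\widetilde{I}_n)$ for small $\sigma(I_0)$ --- has already been absorbed into the construction of the improved admissible sequence and into Corollary~\ref{cor:small}.
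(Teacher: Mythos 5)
Your proposal is correct and follows essentially the same route as the paper: in both cases one exhibits, for each relevant index, an improved $(\delta,\eta)$-admissible sequence whose triple covers $V_n$, and then invokes the uniform diameter bound of Corollary \ref{cor:small}. Your observation that the supremum is attained in case (b) (integer-valued $n(\tau)$ bounded by $N$) is a harmless simplification of the paper's remark that part (b) is proved in the same way as part (a).
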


\begin{proof}
We only prove Part (a) since the proof for Part (b) is completely the same.
Suppose $\chi(I_0,\{V_n\}_{n\geqslant 0}) = +\infty$. Then there exists a sequence of improved $(\delta,\eta)$-admissible sequences $\big\{\{(J_n^{(m)}, F_n^{(m)}, d_n^{(m)})\}_{n\geqslant 0}:m\geqslant 0\big\}$ such that for any $m\geqslant 0$,
\begin{equation}
V_n \subset H_{d_n^{(m)}}(J_n^{(m)}) \cup F_n^{(m)} \text{\quad for all } 0\leqslant  n\leqslant  m.
\end{equation}
By Corollary \ref{cor:small}, if $\sigma(I_0) < \widetilde{r}$, then
\begin{equation}
\diam_{\EC}(V_n)\leqslant  \diam_{\EC}\big(H_{d_n^{(m)}}(J_n^{(m)}) \cup F_n^{(m)}\big)<\varepsilon \text{\quad for all } 0\leqslant  n\leqslant  m.
\end{equation}
This implies that $\diam_{\EC}(V_n) < \varepsilon$ for all $n\geqslant 0$.
\end{proof}

\begin{defi}[Jump off from $\T$]
If $\chi(I_0, \{V_n\}_{n\geqslant 0}) = N<+\infty$, we say that $V_{N+1}$ is the \textit{first jump off} from $\T$ with respect to $I_0$.
\end{defi}

For a piecewise smooth curve $\gamma$ in $\Omega$, we use $l_\Omega(\gamma)$ to denote the length of $\gamma$ with respect to the hyperbolic metric $\rho_\Omega(z)|dz|$.
Let $V$ be a Jordan disk in $\Omega$ with $\overline{V}\subset\Omega$. For two different points $z_1,z_2\in V$, let $\Gamma_V(z_1,z_2)$ be the collection of all smooth curves in $V$ connecting $z_1$ with $z_2$. We define a variant hyperbolic diameter of $V$ in $\Omega$:
\begin{equation}\label{equ:diam-V-variant}
\widetilde{\diam}_{\Omega}(V):=\sup_{z_1,z_2\in V}\Big\{\inf_{\gamma\in\Gamma_V(z_1,z_2)}\big\{l_\Omega(\gamma)\big\}\Big\}.
\end{equation}
By definition we have $\diam_\Omega(V)\leqslant  \widetilde{\diam}_{\Omega}(V)$.

For an annulus $A$ in $\EC$, we use $\Mod(A)$ to denote the conformal modulus of $A$. Let $\MP(G)$ be the postcritical set of $G$ defined in \eqref{equ:P-G}. We denote
\begin{equation}\label{equ:P-G-hat}
\widehat{\MP}(G):=\{z\in\MP(G)\setminus\T:\exists\, n\geqslant 1 \text{ such that } G^n(z)\in\T\}.
\end{equation}
Let $r_6>0$ be the number introduced in Lemma \ref{lemma:improved sequence}.

\begin{lem}\label{lemma:jumps}
Let $M > 0$ and $r' \in(0, r_6]$ be given. Then there exists $r''=r''(M,r')\in(0,r']$ such that for any pullback sequence $\{V_n\}_{n\geqslant 0}$ of a Jordan disk $V_0 \subset H_d(I_0)$ with $I_0 \subset \T$ and $\sigma(I_0) < r''$, if $V_{N+1}$ is the first jump off from $\T$ with respect to $I_0$,  then there exists a Jordan disk $U_{N+1}$ such that one of the following holds:
\begin{enumerate}
\item The first type jump off: $V_{N+1}\subset U_{N+1}\subset\EC\setminus\overline{\D}$, $\Mod(U_{N+1}\setminus \overline{V}_{N+1})>M$ and $U_{N+1}\cap \MP(G)\subset \widehat{\MP}(G)$ with $\sharp (U_{N+1}\cap \MP(G))\leqslant  1$; or
\item The second type jump off: $V_{N+1}\subset U_{N+1}\subset H_d(J')$ for an arc $J' \subset \T$ with $\sigma(J')  = r'$,
\begin{equation}
\quad\qquad \rho_\Omega(w) \leqslant\mu\,\rho_\Omega(G(w))|G'(w)| \text{ for } w\in V_{N+1}
\text{\quad and\quad}
\widetilde{\diam}_\Omega(U_{N+1}) < K,
\end{equation}
where $0<\mu<1$ and $K>0$ are constants depending only on $\delta$, $\eta$ and $d$.
\end{enumerate}
\end{lem}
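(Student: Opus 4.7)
My plan is to trace exactly which branch of $G^{-1}$ carries $V_N$ to $V_{N+1}$ and derive the dichotomy from the failure of admissibility. By the definition of the jump off at $N+1$, there exist a $(\delta,\eta)$-admissible sequence $\{I_n\}_{n\geq 0}$ starting at $I_0$ and an associated improved triple sequence $\{(\widetilde{I}_n, \widetilde{F}_n,\widetilde{d}_n)\}_{n\geq 0}$ such that $V_n \subset H_{\widetilde{d}_n}(\widetilde{I}_n) \cup \widetilde{F}_n$ for every $0 \le n \le N$, yet \emph{no} such choice of admissible sequence extends to cover $V_{N+1}$. The containment at level $N$ forces $\widetilde{\diam}_\Omega(V_N)$ to be bounded by a constant depending only on $\widetilde{K}$ and $d$, and the pull back $V_N \to V_{N+1}$ is governed by a single inverse branch $\Phi$ of $G$ defined on a simply connected neighborhood of $V_N$. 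I would use Lemma \ref{lem:take-pre} to define $U_{N+1}$ as the component containing $V_{N+1}$ of the $\Phi$-preimage of a slight enlargement $W$ of $V_N$, where $W$ is chosen so that $\overline{W}\cap (\MP(G)\setminus\T)$ contains at most one point. The classification of $\Phi$ via the critical successor scheme of \S\ref{subsec:crit-s} together with Lemma \ref{lem:crit-pullback} shows that $\Phi$ must either take $V_N$ into a component bounded away from $\T$ (first type) or into a ``wing'' attached to $\T$ along an arc too small for any admissible choice (second type).

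\textbf{First type (case (a)).} For this case I would handle the situation where $\overline{U}_{N+1}\cap \T = \emptyset$. The modulus estimate $\Mod(U_{N+1}\setminus\overline{V}_{N+1})>M$ would be obtained by choosing $r'' = r''(M,r')$ small enough that $\sigma(I_0) < r''$ forces $\widetilde{\diam}_\Omega(V_N)$ to be small. Indeed, $H_{\widetilde{d}_N}(\widetilde{I}_N)$ shrinks with $\sigma(\widetilde{I}_N) < 2\sigma(I_0)$ by Lemma \ref{lemma:shorten}, and although the $\widetilde{F}_N$-components carry hyperbolic diameter up to $\widetilde{K}$ they sit in a thin tubular region whose complement in the natural $W$ remains large. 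Pulling back by the conformal branch $\Phi$ and invoking Schwarz-Pick, $V_{N+1}$ inherits small hyperbolic diameter in $U_{N+1}$, and the standard comparison between hyperbolic size and conformal modulus yields the required bound. The refinement $U_{N+1} \cap \MP(G) \subset \widehat{\MP}(G)$ with at most one point will follow from the choice of $W$: its only non-$\T$ interaction with $\MP(G)$ is one critical value $v$, whose $\Phi$-preimage is a single point lying in $\widehat{\MP}(G)$ by \eqref{equ:P-G-hat}, and any preimages of $\T\cap\overline{W}$ land in $\widehat{\MP}(G)$ for the same reason.

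\textbf{Second type (case (b)) and main obstacle.} For this case $V_{N+1}$ still attaches to $\T$ along a small arc $J_0$; the key point is that $J_0$ corresponds not to an admissible successor of $\widetilde{I}_N$ but to a wing branch of the type produced in Lemma \ref{lem:crit-pullback}. I would enlarge $J_0$ to an arc $J\supset J_0$ with $\sigma(J)=r'$ and set $U_{N+1}:=H_d(J)$; the inclusion $V_{N+1}\subset U_{N+1}$ follows from Lemma \ref{lem:crit-pullback} provided $\sigma(I_0)$ is taken small enough so that the wing components fit inside $H_d$ of their base arcs. The strict contraction $\rho_\Omega(w) \le \mu\,\rho_\Omega(G(w))|G'(w)|$ at every $w \in V_{N+1}$ is then a direct application of Lemma \ref{lemma:Pe} to $V = H_d(J)$, since the inverse branch realizing $V_{N+1}$ forms the definite angle $\beta=\beta(d)$ with $\T$ at the attached critical value in $\MCV$. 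The hyperbolic diameter bound $\widetilde{\diam}_\Omega(U_{N+1}) < K$ is immediate because $U_{N+1}$ is a single half hyperbolic neighborhood of an arc of fixed dynamical length $r'$, whose intrinsic size depends only on $\delta$, $\eta$, $d$ through $r'$. The hardest step will be verifying the dichotomy cleanly: one must rule out the ``borderline'' scenario where $U_{N+1}$ meets $\T$ along an arc $J_0$ of intermediate length which neither admits an admissible continuation nor sits in the strict-contraction region of Lemma \ref{lemma:Pe}. I expect this to be handled by combining the critical successor combinatorics of \S\ref{sec-pull-back} with the freedom to shrink $\sigma(I_0)$ arbitrarily relative to $r'$, which forces $J_0$ to be far smaller than $r'$ and places the attached critical value strictly inside the Petersen-type wing region.
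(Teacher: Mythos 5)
Your treatment of the first type is broadly in line with the paper's, but the second type contains a genuine gap, and it is exactly at the point the lemma is designed to deliver. First, the second-type jump off does \emph{not} attach to $\T$ along an arc $J_0$: a preimage component of $H_{\widetilde{d}_N}(\widetilde{I}_N)\cup \widetilde{F}_N$ that is attached to $\T$ along an arc is the one carried by the non-critical (or an admissible critical) successor, and by the construction of the improved sequence it is then contained in $H_{\widetilde{d}_{N+1}}(\widetilde{I}_{N+1})\cup\widetilde{F}_{N+1}$, so it cannot be a jump off. In the second type (which in the paper occurs when no critical value lies in $(1+2\delta)\widetilde{I}_N$), the component $\widetilde{V}_{N+1}$ lies in a ``wing'' near a critical point $c\in\T$ and is \emph{bounded away from} $\T$, merely very close to it. Second, and more seriously, your choice $U_{N+1}:=H_d(J)$ cannot satisfy $\widetilde{\diam}_\Omega(U_{N+1})<K$: since $\T\subset\partial\Omega$ and $H_d(J)$ accumulates on $J\subset\T$, the hyperbolic metric $\rho_\Omega$ blows up there and $\widetilde{\diam}_\Omega(H_d(J))$ is infinite, so the bound is not ``immediate'' -- it is false for this choice. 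The paper instead takes $U_{N+1}$ to be an annular sector $\{z:\ r_{int}<|z-c|<r_{out},\ \angle\,zcx_2\in(\beta,\pi-\beta)\}$ with $r_{out}/r_{int}<K_1(\delta,\eta,d)$ and $r_{out}<C_2 r'$; the positive inner radius, the bounded ratio and the definite angle away from $\T$ are precisely what give a uniform bound $K=K(\delta,\eta,d)$ while keeping $U_{N+1}\subset H_d(J)$ with $\sigma(J)=r'$. Without a bounded-hyperbolic-diameter $U_{N+1}$, the downstream argument in Lemma \ref{lem:back-1} (where successive second-type jump offs give $\widetilde{\diam}_\Omega(W_{N_{i+1}+1})<\mu^i K$) collapses, so this is not a cosmetic issue.

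A related misstep: you justify the contraction $\rho_\Omega(w)\le\mu\,\rho_\Omega(G(w))|G'(w)|$ for $w\in V_{N+1}$ by applying Lemma \ref{lemma:Pe} ``to $V=H_d(J)$''. That is the wrong side of the map: $J$ sits at the critical point $c$ in the preimage plane, and applying Lemma \ref{lemma:Pe} to $H_d(J)$ would control the pull back $V_{N+1}\to V_{N+2}$, not $V_N\to V_{N+1}$. What is needed (and what the paper does, implicitly) is to apply Lemma \ref{lemma:Pe} with $V$ a cone at the critical value $v=G(c)$ containing $H_{\widetilde{d}_N}(\widetilde{I}_N)\cup\widetilde{F}_N$ (hence containing $V_N=G(V_{N+1})$), whose boundary meets $\T$ at $v$ with a definite angle $\widetilde{\beta}(\delta,\eta,d)$, and with $U$ the wing component of $G^{-1}(V)$ containing $V_{N+1}$, which satisfies $\sharp(\overline{U}\cap\T)\le 1$; this yields $\mu=\mu(\delta,\eta,d)<1$. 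Your intuition about the definite angle is right, but as written the lemma is invoked on the wrong object. Finally, the ``borderline'' dichotomy you flag is resolved in the paper not by arc-length comparisons of an attaching arc but by comparing $\dist_{\C}(\widetilde{V}_{N+1},\T)$ with $C_1 r'$, using that $\diam_{\C}(H_{\widetilde{d}_N}(\widetilde{I}_N)\cup\widetilde{F}_N)$ is comparable to $|\widetilde{I}_N|$ and hence to its distance from $v$, together with the local structure of $G^{-1}$ (a radical map composed with a quasiconformal map) near $c$.
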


\begin{proof}
Let $\{(J_n,F_n,d_n)\}_{n\geqslant 0}$ be an improved $(\delta,\eta)$-admissible sequence such that $V_N\subset H_{d_N}(J_N)\cup F_N$.
By Corollary \ref{cor:small},  if $\sigma(I_0)> 0$ is small enough, then $H_{d_N}(J_N)\cup F_N$ can be arbitrarily close to $\T$ and its spherical diameter can be arbitrarily small.
Note that one of the following happens:
\begin{itemize}
  \item[(i)] $v\in (1+2\delta)J_N$ for some critical value $v\in \MCV$; or
  \item[(ii)] $v\not\in(1+2\delta)J_N$ for any critical value $v\in \MCV$.
\end{itemize}

Suppose Case (i) happens. By the definition of jump off, it follows that the branch of $G^{-1}$  determined by $V_N \to V_{N+1}$, say $\Phi$, is not associated to any $(\delta,\eta)$-critical or non-critical pullback of $J_N$.  So $\Phi(v)$  does not belong to $\mathbb T$ and  $V_{N+1}$  is  bounded away from $\T$.  In particular, for given $M>0$, there exists $s_1=s_1(M)>0$ such that if $\sigma(I_0)<s_1$, then $\diam_{\EC}(V_N)$ is sufficiently small and there exists a Jordan disk $U_{N+1}$ such that Part (a) holds.

\medskip
Suppose Case (ii) happens.
For three different points $z_0,z_1,z_2\in\C\setminus\D$, let $\angle\, z_1 z_0 z_2$ be the angle measured in the logarithmic plane of $G$ (see \eqref{equ:angle}).
Let $\widetilde{V}_{N+1}$ be the component of $G^{-1}(H_{d_N}(J_N)\cup F_N)$ which contains $V_{N+1}$.
Let $r'\in(0,r_6]$ be given.
We claim that there exist $s_2=s_2(r')>0$, constants $0 < C_1 <  C_2 < 1$ and $K_1>1$ depending only on $\delta$, $\eta$, $d$ such that if $\sigma(I_0)<s_2$, then one of the following is true (see Figure \ref{Fig_crit-pullback-iii-useful}):
\begin{itemize}
\item[(1)] ${\rm dist}_{\C} (\widetilde{V}_{N+1}, \T) >  C_1 r'$; or
\item[(2)] There exist an arc $J'=(x_1,x_2)\subset\T$ containing a critical point $c \in \T$ with $\sigma(J') = r'$ and constants $\beta\in(0,\pi/2)$, $r_{int},r_{out}\in(0, C_2 r')$ depending only on $\delta,\eta,d$ with $1<r_{out}/r_{int} < K_1$ such that $\widetilde{V}_{N+1} \subset U_{N+1}\subset H_d(J')$, where
\begin{equation}
\qquad  U_{N+1} = \big\{z\in\C \,\big|\, r_{int} < |z - c| < r_{out} \text{ and } \angle\,zcx_2\in(\beta,\pi-\beta)\big\}.
\end{equation}
\end{itemize}

\begin{figure}[!htpb]
 \setlength{\unitlength}{1mm}
 \centering
  \includegraphics[width=0.92\textwidth]{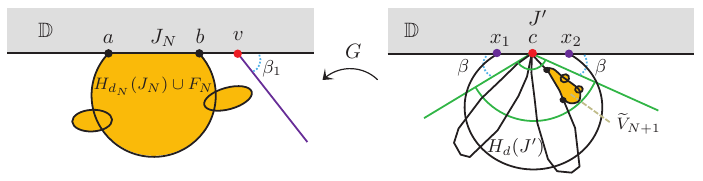}
  \caption{A sketch of the second type jump off.}
 \label{Fig_crit-pullback-iii-useful}
\end{figure}

\medskip
We first prove the claim. In fact, if ${\rm dist}_{\C}(\widetilde{V}_{N+1}, \T) < r'$ (note that $r' > 0$ is small), there must be a critical point $c \in \T$ such that $\widetilde{V}_{N+1}$ is contained in a small neighborhood of $c$. This implies that $H_{d_N}(J_N)\cup F_N$ is contained in a small neighborhood of the critical value $v = G(c)$.
Without loss of generality, we assume that $v$ is on the right of $J_N=(a,b)$.
By \eqref{equ:diam-2}, there exists $s_2'=s_2'(\delta,\eta,d)>0$, such that if $\sigma(I_0)<s_2'$, then $H_{d_N}(J_N)\cup F_N\subset H_{d_1}\big((a, b)\big)\subset H_{d_1}\big((a, v)\big)$, where $H_{d_1}\big((a, v)\big)$ is a simply connected half hyperbolic neighborhood for some $d_1=d_1(\delta,\eta,d)>0$, and is thus contained in a cone spanned at $v$ and
bounded by $\T$ and a ray which forms an angle $\beta_1=\beta_1(\delta,\eta,d)\in(0,\pi/2)$ with $\T$ (see also \eqref{equ:H-beta}).
By Lemmas \ref{lem:comparable} and \ref{lemma:swz-2}(a), there exist constants $C_3=C_3(\delta,\eta,d)>0$ and $C_4=C_4(\delta,\eta,d)>0$ such that
\begin{equation}\label{equ:diam-H-d-N}
{\rm diam}_{\C} (H_{d_N}(J_N)\cup F_N) <C_3 |J_{N}| <C_4\,{\rm dist}_{\C} (J_N, v).
\end{equation}
Note that in a small neighborhood of the critical point $c$, the map $G$ can be written as the composition of a power mapping like $z\mapsto (z-c)^{2\ell-1}$ and a quasiconformal mapping (see Figure \ref{Fig_crit-pullback-bounded}). By \eqref{equ:diam-H-d-N}, there exist $\beta=\beta(\delta,\eta,d)\in(0,\pi/2)$, $K_1=K_1(\delta,\eta,d)>1$ and $0 < r_{int} < r_{out}$ with $r_{out}/r_{int} < K_1$ such that
\begin{equation}\label{equ:V-U-K}
\widetilde{V}_{N+1} \subset U_{N+1}=\big\{z \,|\, r_{int} < |z - c| < r_{out} \text{ and } \angle\,zcx_2\in(\beta,\pi-\beta)\big\}.
\end{equation}
Let $0<C_1<1$ be a small number. If ${\rm dist}_{\C} (\widetilde{V}_{N+1}, \T)  \leqslant   C_1 r'$, then $r_{int} <C_1 C_5 r'$ for a constant $C_5=C_5(\delta,\eta,d)>1$. Since $r_{out} < K_1 r_{int}$, we get $0< C_2= K_1C_1C_5< 1$ and $0< r_{int} < r_{out} < C_2 r'$. Let $J'$ be the arc with $c$ being the middle point and $\sigma(J')  = r'$. There exists $s_2''=s_2''(r')>0$ such that if $\sigma(I_0)<s_2''$ and $0<C_1<1$ is small enough, then $U_{N+1} \subset H_d(J')$.
Then the claim follows by taking $s_2:=\min\{s_2',s_2''\}$.

The lemma follows immediately by assuming the claim. Indeed, if (1) is true, then similar to the argument for Case (i), there exists $s_3=s_3(M)>0$ such that if $\sigma(I_0)<s_3$, then $\widetilde{V}_{N+1}$ is sufficiently small and there exists a Jordan disk $U_{N+1}$ such that Part (a) holds.
If (2) is true, then by Lemma \ref{lemma:Pe}, there exists $\mu=\mu(\beta_1)=\mu(\delta,\eta,d)<1$ such that $\rho_\Omega(w) \leqslant\mu\,\rho_\Omega(G(w))|G'(w)|$ for $w\in V_{N+1}$. Moreover, by \eqref{equ:V-U-K}, there exists $K=K(\delta,\eta,d)>0$ such that $\widetilde{V}_{N+1}\subset U_{N+1}$ and $\widetilde{\diam}_\Omega(U_{N+1}) < K$. Hence Part (b) holds.
The proof is complete by setting $r'':=\min\{s_1,s_2,s_3\}$.
\end{proof}

\begin{lem}\label{lem:back-1}
For any $\varepsilon> 0$, there exists $r> 0$  such that if $V_0\subset H_d(I_0)$ for an arc $I_0\subset\T$ with $\sigma(I_0)< r$, then for any pullback sequence $\{V_n\}_{n\geqslant 0}$ of $V_0$, $\diam_{\EC} (V_n) < \varepsilon$ for all $n \geqslant 0$.
\end{lem}

\begin{proof}
By the assumption in the Main Lemma, there exists $D_0\geqslant 2$ such that for every $z\in\T$, there exists a small Jordan disk $W_0$ containing $z$, such that for any sequence $\{W_n\}_{n\geqslant 1}$ of pullbacks of $W_0$ in $\EC\setminus\overline{\D}$, we have $\deg(G^n: W_n\to W_0)\leqslant  D_0$.

For any given $\varepsilon>0$, there exist $\varepsilon'>0$ and $M>0$ depending only on $\varepsilon$ such that for any Jordan disks $U$ and $V$ in $\EC\setminus\overline{\D}$ with $\overline{V}\subset U$, we have
\begin{equation}\label{equ:criterion-epsilon}
\text{if } \diam_{\EC\setminus\overline{\D}}(V) < \varepsilon' \text{ or }\Mod(U\setminus \overline{V})> M/D_0, \text{ then } \diam_{\EC}(V) < \varepsilon.
\end{equation}
In particular, if $\diam_{\Omega}(V) < \varepsilon'$, then $\diam_{\EC}(V) < \varepsilon$. For such $\varepsilon'>0$, let $k_0\geqslant 3$ be the smallest integer such that
\begin{equation}\label{equ:mu-K}
\mu^{k_0-1} K < \varepsilon',
\end{equation}
where $0<\mu<1$ and $K>0$ are constants guaranteed by Lemma \ref{lemma:jumps}(b).

\medskip

Let $r_1':=\widetilde{r}$, where $\widetilde{r}=\widetilde{r}(\varepsilon)>0$ is the number introduced in Corollary \ref{cor:small}. For such $r_1'$, by Lemma \ref{lemma:jumps}, we get $r_2'=r_2'(M,r_1')\in (0, r_1']$. Inductively, we get a sequence of numbers:
\begin{equation}
r_{j+1}'=r_{j+1}'(M,r_j')\in(0,r_j'], \text{\quad where } 1\leqslant  j\leqslant  k_0.
\end{equation}
We claim that $r:= r_{k_0+1}'$ is the required number.

\medskip
In fact, let $V_0\subset H_d(I_0)$ with $\sigma(I_0)< r_{k_0+1}'\leqslant  r_1'$. If $\chi(I_0,\{V_n\}_{n\geqslant 0}) = +\infty$, then by Lemma \ref{lem:shrink}(a) we have $\diam_{\EC}(V_n) < \varepsilon$ for all $n\geqslant 0$.
Otherwise, we have $\chi(I_0,\{V_n\}_{n\geqslant 0}) = N_1<+\infty$, i.e., $V_{N_1+1}$ is the first jump off from $\T$ with respect to $I_0$. Then by Lemma \ref{lem:shrink}(b) we have
\begin{equation}\label{equ:diam-N-1}
\diam_{\EC}(V_n)<\varepsilon \text{\quad for all\quad}0\leqslant  n\leqslant  N_1.
\end{equation}
By Lemma \ref{lemma:jumps}, there exists a Jordan disk $U_{N_1+1}$ such that one of the following holds:
\begin{itemize}
\item[(1-i)] $V_{N_1+1}\subset U_{N_1+1}\subset\EC\setminus\overline{\D}$, $\Mod(U_{N_1+1}\setminus \overline{V}_{N_1+1})>M$ and $U_{N_1+1}\cap \MP(G)\subset \widehat{\MP}(G)$ with $\sharp (U_{N_1+1}\cap \MP(G))\leqslant  1$; or
\item[(1-ii)] $V_{N_1+1}\subset U_{N_1+1}\subset H_d(J_1')$ for an arc $J_1' \subset \T$ with $\sigma(J_1') =r_{k_0}'$, and
\begin{equation}
\widetilde{\diam}_\Omega(U_{N_1+1}) < K.
\end{equation}
\end{itemize}
If we are in Case (1-i), then $\diam_{\EC}(V_n) < \varepsilon$ for all $n\geqslant 0$ by \eqref{equ:criterion-epsilon} and \eqref{equ:diam-N-1}.

\medskip
Suppose we are in Case (1-ii). We consider all the possible improved $(\delta,\eta)$-admissible sequence beginning with $J_1'$.
For $n\geqslant N_1+1$, let $W_n$ be the component of $G^{-(n-N_1-1)}(U_{N_1+1})$ containing $V_n$. Then $G^{n-N_1-1}:W_n\to U_{N_1+1}$ is conformal by Lemma \ref{lem:take-pre}.
If $\chi(J_1',\{W_n\}_{n\geqslant N_1+1}) = +\infty$, then by Lemma \ref{lem:shrink}(a) we have
$\diam_{\EC}(W_n)< \varepsilon$ for all $n\geqslant N_1+1$. Combining \eqref{equ:diam-N-1}, we have $\diam_{\EC}(V_n)< \varepsilon$ for all $n\geqslant 0$.
Otherwise, we have $\chi(J_1',\{W_n\}_{n\geqslant N_1+1}) = N_2-N_1-1<+\infty$, i.e., $W_{N_2+1}$ is the first jump off from $\T$ with respect to $J_1'$. Then by Lemma \ref{lem:shrink}(b) we have
\begin{equation}\label{equ:diam-N-2}
\diam_{\EC}(V_n)<\varepsilon \text{\quad for all\quad}0\leqslant  n\leqslant  N_2.
\end{equation}
By Lemma \ref{lemma:jumps}, one of the following holds:
\begin{itemize}
\item[(2-i)] There exists a Jordan disk $U_{N_2+1}$ such that $W_{N_2+1}\subset U_{N_2+1}\subset\EC\setminus\overline{\D}$, $\Mod(U_{N_2+1}\setminus \overline{V}_{N_2+1})\geqslant \Mod(U_{N_2+1}\setminus \overline{W}_{N_2+1})>M$ and $U_{N_2+1}\cap \MP(G)\subset \widehat{\MP}(G)$ with $\sharp (U_{N_2+1}\cap \MP(G))\leqslant  1$; or
\item[(2-ii)] $W_{N_2+1}\subset H_d(J_2')$ for an arc $J_2' \subset \T$ with $\sigma(J_2') =r_{k_0-1}'$,
\begin{equation}
\rho_\Omega(w) \leqslant\mu\,\rho_\Omega(G(w))|G'(w)| \text{ for }w\in W_{N_2+1}
\text{\quad and\quad}
\widetilde{\diam}_\Omega(W_{N_2+1}) < \mu K.
\end{equation}
\end{itemize}
If we are in Case (2-i), then $\diam_{\EC}(V_n) < \varepsilon$ for all $n\geqslant 0$ by \eqref{equ:criterion-epsilon} and \eqref{equ:diam-N-2}.

\medskip
Suppose we are in Case (2-ii). Then we consider the value of $\chi(J_2',\{W_n\}_{n\geqslant N_2+1})$ and see whether it is equal to $+\infty$.
Let $k_0\geqslant 3$ be the integer introduced in \eqref{equ:mu-K}.
Inductively, repeating the above process finitely many times, we have the following two cases:

\medskip
\textbf{Case (I)}.
There exist an integer $2\leqslant  i\leqslant  k_0-1$, a sequence of arcs $J_1'$, $\cdots$, $J_i'$ in $\T$ and a sequence of integers $N_1$, $\cdots$, $N_i$ such that
\begin{equation}
\chi(J_{k}',\{W_n\}_{n\geqslant N_{k}+1}) =N_{k+1}-N_{k}-1 <+\infty \text{\quad for all\quad}1\leqslant  k\leqslant  i-1,
\end{equation}
and either
\begin{itemize}
\item $\chi(J_i',\{W_n\}_{n\geqslant N_i+1}) = +\infty$; or
\item $\chi(J_i',\{W_n\}_{n\geqslant N_i+1})=N_{i+1}-N_i-1 < +\infty$ and there exists a Jordan disk $U_{N_{i+1}+1}$ such that $W_{N_{i+1}+1}\subset U_{N_{i+1}+1}\subset\EC\setminus\overline{\D}$, $\Mod(U_{N_{i+1}+1}\setminus \overline{V}_{N_{i+1}+1})\geqslant \Mod(U_{N_{i+1}+1}\setminus \overline{W}_{N_{i+1}+1})>M$ and $U_{N_{i+1}+1}\cap \MP(G)\subset \widehat{\MP}(G)$ with $\sharp (U_{N_{i+1}+1}\cap \MP(G))\leqslant  1$.
\end{itemize}
In either case we have $\diam_{\EC}(V_n) < \varepsilon$ for all $n\geqslant 0$ by Lemma \ref{lem:shrink} and \eqref{equ:criterion-epsilon}.

\medskip
\textbf{Case (II)}.
There exist a sequence of arcs $J_1'$, $\cdots$, $J_{k_0}'$ in $\T$ and a sequence of integers $N_1$, $\cdots$, $N_{k_0}$ such that for all $1\leqslant  i\leqslant  k_0-1$,
\begin{itemize}
\item $\chi(J_i',\{W_n\}_{n\geqslant N_i+1}) =N_{i+1}-N_i-1 <+\infty$; and
\item $W_{N_{i+1}+1}\subset H_d(J_{i+1}')$ with $\sigma(J_{i+1}') =r_{k_0-i}'$,
\begin{equation}
\rho_\Omega(w) \leqslant\mu\,\rho_\Omega(G(w))|G'(w)| \text{ for }w\in W_{N_{i+1}+1}
\text{\quad and\quad}\widetilde{\diam}_\Omega(W_{N_{i+1}+1}) < \mu^{i} K.
\end{equation}
\end{itemize}
By \eqref{equ:mu-K} we have
\begin{equation}
\diam_\Omega(V_{N_{k_0}+1})\leqslant  \widetilde{\diam}_\Omega(W_{N_{k_0}+1}) < \mu^{k_0-1} K<\varepsilon'.
\end{equation}
By Lemma \ref{lem:shrink} and \eqref{equ:criterion-epsilon}, $\diam_{\EC} (V_n) < \varepsilon$ for all $n \geqslant 0$.
The proof is complete.
\end{proof}

\subsection{Proof of the Main Lemma$'$} \label{subsec:key-thm}

Let $f: \EC\to \EC$ be a rational map of degree at least $2$. A sequence $\{V_n\}_{n\geqslant 0}$ is called \textit{a pullback sequence} of a Jordan disk $V_0$ under $f$ if $V_{n+1}$ is a connected component of $f^{-1}(V_n)$ for all $n\geqslant 0$. The following result was proved in \cite[p.\,86]{LM97} (see also \cite{Man93}, \cite{TY96}).

\begin{lem}[Shrinking Lemma]\label{lem:semi-hyperbolic}
Let $D\geqslant 1$ and $U_0, V_0$ be two Jordan disks in $\EC$. Suppose $U_0$ is not contained in any rotation domain of $f$ and $V_0$ is compactly contained in $U_0$.
Then for any $\varepsilon>0$, there exists an $N\geqslant 1$ such that for any pullback sequence $\{U_n\}_{n\geqslant 0}$ satisfying $\deg(f^n:U_n\to U_0)\leqslant  D$, $\diam_{\EC}(V_n)<\varepsilon$ for all $n\geqslant N$, where $V_n$ is any component of $f^{-n}(V_0)$ contained in $U_n$.
\end{lem}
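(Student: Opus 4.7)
The plan is to argue by contradiction, combining the modulus--degree inequality with a normal-family argument that exploits the hypothesis that $U_0$ is not contained in any rotation domain. First, I would set $m_0 := \Mod(U_0\setminus \overline{V}_0)>0$, which is positive since $V_0\Subset U_0$. For every admissible pull back $\{U_n\}$, the map $f^n\colon U_n\to U_0$ is proper of degree at most $D$, and every component $V_n$ of $f^{-n}(V_0)$ contained in $U_n$ is surrounded inside $U_n$ by an annular neighbourhood $A_n\subset U_n\setminus\overline{V}_n$ on which $f^n$ is a branched covering onto $U_0\setminus\overline{V}_0$ of some degree $d_n\le D$. The standard modulus formula for annular coverings then yields
$$ \Mod(A_n) \;\ge\; m_0/D . $$

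Suppose the conclusion fails: there exist $\varepsilon>0$, pull-back sequences with $\deg(f^n|_{U_n})\le D$, and indices $n_k\to\infty$ with $\diam_{\EC}(V_{n_k})\ge \varepsilon$. By the Gr\"otzsch--Teichm\"uller estimate (a spherical annulus of modulus at least $m_0/D$ that separates a connected set of spherical diameter at least $\varepsilon$ bounds, on its inner side, a set containing a round spherical disk of definite radius), after passing to a subsequence there is a fixed spherical ball $B=B(z_\infty,\rho_0)\Subset V_{n_k}$ for all large $k$. Since $f^{n_k}(B)\subset V_0\Subset \C$, the family $\{f^{n_k}|_B\}$ is normal by Montel, and a subsequence converges locally uniformly on $B$ to a holomorphic $g\colon B\to \overline{V}_0$.

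The hypothesis on $U_0$ is then used to derive a contradiction in two cases. If $B\cap J(f)\ne\emptyset$, the extracted normal limit already contradicts the non-normality of $\{f^n\}$ at Julia points. If $B\subset F(f)$, let $W$ be the Fatou component containing $B$ and $W_0$ the Fatou component containing $U_0$; the forward images $f^{n_k}(B)\subset V_0\subset W_0$ show $W$ maps into $W_0$ infinitely often, so by Sullivan's non-wandering theorem both are periodic. Since $W_0$ is not a rotation domain, it is an attracting or parabolic basin, and $f^n$ converges locally uniformly on $W$ to the attracting cycle or parabolic fixed point. Thus $\diam_{\EC}(f^{n_k}(B))\to 0$, which, combined with the fact that $f^{n_k}\colon V_{n_k}\to V_0$ is a branched cover of degree at most $D$ and $V_0\Subset U_0$, forces $\diam_{\EC}(V_{n_k})\to 0$ via a Koebe-type distortion estimate for bounded-degree branched covers — contradicting $\diam_{\EC}(V_{n_k})\ge\varepsilon$.

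The main obstacle is the last implication in the Fatou case: one must upgrade the shrinkage of the image $f^{n_k}(B)\subset V_0$ to shrinkage of the source $V_{n_k}$, even though the inverse branches of $f^{n_k}$ over $V_0$ need not be univalent. The device is to apply Koebe distortion to each univalent restriction of the bounded-degree cover $f^{n_k}\colon V_{n_k}\to V_0$ over a slightly smaller target domain $V_0'\Subset V_0$ (using the definite collar $V_0\Subset U_0$ coming from $m_0>0$): the degree cap $D$ ensures a uniform distortion constant, so the spherical diameter of any preimage component over $V_0'$ is comparable to that of a round inverse image of a shrinking disk, which tends to $0$.
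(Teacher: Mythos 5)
The paper never proves this lemma -- it is quoted with a citation to Lyubich--Minsky, Ma\~n\'e and Tan--Yin -- so your argument has to stand on its own, and its overall skeleton (contradiction, bounded degree, normal families, Sullivan plus the classification of periodic Fatou components) is indeed the right ballpark; but two of the key steps are genuinely gapped.

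First, the extraction of the fixed ball $B\Subset V_{n_k}$ rests on a false geometric claim. An annulus of modulus at least $m_0/D$ separating a continuum of spherical diameter at least $\varepsilon$ from the continuum $\EC\setminus U_{n_k}$ does \emph{not} force the inner side to contain a round disk of definite radius: if the outer continuum is small, the separating annulus can have arbitrarily large modulus while the inner set is a thin neighborhood of an arc (take the inner set to be an arc of diameter $\varepsilon$ and the outer set a tiny disk; the complementary annulus has modulus tending to infinity). Nothing in the hypotheses prevents $\EC\setminus U_{n_k}$ from being small or $V_{n_k}$ from being thin, so no fixed round ball inside $V_{n_k}$ can be produced this way, and everything downstream ($f^{n_k}(B)\subset V_0$, Montel on $B$, the endgame) loses its foundation. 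The closing ``Koebe-type'' transfer is also not an argument as stated: $f^{n_k}$ maps $V_{n_k}$ \emph{onto} the fixed domain $V_0$, so nothing about its restriction to $V_{n_k}$ shrinks, and inverse branches over the shrinking set $f^{n_k}(B)$ have images containing the non-shrinking $B$. A correct argument has to uniformize the pull-back domain at a base point of $V_{n_k}$, use the modulus--degree inequality (in its filled-preimage form -- the preimage component of $U_0\setminus\overline{V}_0$ need not be an annulus, though that point is cosmetic) to confine $V_{n_k}$ to a fixed hyperbolic ball, and then propagate near-constancy by a normality/Schwarz argument to contradict $f^{n_k}(V_{n_k})=V_0$; that is where the real work lies and it is absent here.

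Second, the hypothesis is attached to the wrong object. $U_0$ need not be contained in the Fatou set at all (in this paper's application $\overline{V}_0$ touches $\partial\Delta\subset J(f)$), so ``the Fatou component $W_0$ containing $U_0$'' is undefined; and even after repairing this via Sullivan (the component $W\supset B$ is eventually absorbed by a periodic cycle of components meeting $V_0$), that terminal cycle may consist of Siegel disks or Herman rings which merely \emph{intersect} $V_0$ while $U_0$ is not contained in them. This is exactly the case the hypothesis is designed for and the only place it is needed: there $f^{n_k}|_B$ can have non-constant limits, the images do not shrink, and your argument yields no contradiction. A complete proof must show that a non-constant limit forces $U_0$ itself to lie in a rotation domain (using the degree bound and $f^{n_k}(U_{n_k})=U_0$), and that step is missing. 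A smaller imprecision in the same spirit: normality of the subsequence $\{f^{n_k}\}$ on $B$ does not contradict non-normality of the full family at Julia points; the Julia case should instead invoke the blow-up property, namely that $f^{n_k}(B)$ eventually covers any compact set avoiding the exceptional points, which is incompatible with $f^{n_k}(B)\subset V_0\Subset U_0\subsetneq\EC$.
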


For the map $f$ in the Main Lemma, let $\MP(G)$ be the postcritical set of the quasi-Blaschke model $G$ of $f$ defined in \eqref{equ:P-G}.

\begin{lem}\label{lem:eventually-short}
Let $V_0\subset \EC\setminus \overline{\D}$ be a Jordan disk such that $\overline{V}_0 \cap \MP(G)$ is a subarc on $\T$.
Then for any $r>0$, there exists $N\geqslant 1$ such that for any pullback sequence $\{V_n\}_{n\geqslant 0}$ of $V_0$ in $\EC\setminus\overline{\D}$, we have $\sigma(\overline{V}_n\cap\T)<r$ for all $n\geqslant N$.
\end{lem}

\begin{proof}
By the assumption, $I_0=\overline{V}_0\cap \MP(G)$ is a proper subarc of $\T$. Note that the forward orbit of any critical point $c_0\in\Crit(G)\cap\T$ is dense in $\T$ and in particular dense in $I_0$.
For any $r>0$, there exists $N=N(r)\geqslant 1$ such that
\begin{equation}\label{equ:max-I}
\max\left\{\sigma(I')
\left|
\begin{array}{l}
I' \text{ is a connected component}\\
\text{of } I_0\setminus\{G^n(c_0):1\leqslant n\leqslant N\}
\end{array}
\right.
\right\}<r.
\end{equation}
Denote $I_n=\overline{V}_n\cap \T$ for $n\geqslant 1$, where $\{V_n\}_{n\geqslant 0}$ is any pullback sequence of $V_0$ in $\EC\setminus\overline{\D}$. Then $I_n$ is either an arc, a singleton or empty. If $I_n$ is a singleton or empty, then $I_m$ is also for all $m>n$. If $I_n$ is an arc containing a critical value $v=G(c)$ which divides $I_n$ into two subarcs $I_n'$ and $I_n''$, where $c\in\Crit(G)\cap\T$, then
\begin{equation}\label{equ:sigma-max}
\sigma(I_{n+1})\leqslant\max\{\sigma(I_n'),\sigma(I_n'')\}.
\end{equation}
By \eqref{equ:max-I} and \eqref{equ:sigma-max} we have $\sigma(I_n)<r$ for all $n\geqslant N$.
\end{proof}

\begin{proof}[Proof of the Main Lemma$'$]
Let $V_0\subset \EC\setminus \overline{\D}$ be a Jordan disk such that $\overline{V}_0 \cap \MP(G)\neq\emptyset$ and $\overline{V}_0 \cap \MP(G) \subset \T$.
Since $\dist_{\C}(\MP(G)\setminus\T,\T)>0$, we claim that $\EC\setminus(\overline{\D}\cup \overline{V}_0)$ has at most finitely many connected components, say $U_1$, $\cdots$, $U_m$, such that
\begin{equation}\label{equ:U-i}
U_i\cap\big(\MP(G)\setminus\T\big)\neq\emptyset, \text{\quad for every }1\leqslant i\leqslant m.
\end{equation}
Indeed, otherwise there are infinitely many $U_i$'s satisfying \eqref{equ:U-i} and there exists $z_0\in\T\cap\partial V_0$ such that $\partial V_0$ is not locally connected at $z_0$, which is a contradiction and hence the claim holds.
By filling all (except at most one) connected components of $\EC\setminus(\overline{\D}\cup \overline{V}_0)$ which are disjoint with $\MP(G)$, we conclude that $V_0$ is contained in a Jordan disk $V_0'$ such that $\overline{V_0'} \cap \MP(G)$ is the union of finitely many subarcs (singletons are allowed) on $\T$.
Without loss of generality, we assume that $V_0=V_0'$ and $\overline{V}_0 \cap \MP(G)$ is the union of finitely many subarcs on $\T$ which are not singletons.

By considering a homeomorphism defined from $\overline{\D}$ onto $\overline{V}_0$, it is easy to see that $V_0$ can be written as the union of finitely many Jordan disks $V_0^1$, $\cdots$, $V_0^\ell$ such that $\overline{V_0^i} \cap \MP(G)$ is a subarc on $\T$ for each $1\leqslant i\leqslant \ell$.
For any sequence of pullbacks $\{V_n\}_{n\geqslant 0}$ of $V_0$ under $G$ in $\EC\setminus\overline{\D}$, $G^n:V_n\to V_0$ is conformal and $G^n:\overline{V}_n\to \overline{V}_0$ is a homeomorphism by Lemma \ref{lem:take-pre}.
So without loss of generality, we assume that $\ell=1$ and that $V_0\subset \EC\setminus \overline{\D}$ is a Jordan disk such that $\overline{V}_0 \cap \MP(G)$ is a subarc on $\T$.

\medskip
For given $\varepsilon>0$, let $r=r(\varepsilon/2)>0$ be the number guaranteed by Lemma \ref{lem:back-1}. By Lemma \ref{lem:eventually-short}, there exists $N_0=N_0(\varepsilon)\geqslant 1$ such that for any pullback sequence $\{V_n\}_{n\geqslant 0}$ of $V_0$ in $\EC\setminus\overline{\D}$,
\begin{equation}
\sigma(\overline{V}_n\cap\T)<r/2 \text{\quad for any }n\geqslant N_0.
\end{equation}
Let $\varsigma:\overline{\D}\to\overline{V}_{N_0}$ be a homeomorphism. By the uniform continuity of $\varsigma$, the Jordan disk $V_{N_0}$ can be written as the union of finitely many small Jordan disks, such that some of them are covered by $H_d(I_0)$ and the rest of them are $W_1$, $\cdots$, $W_{M_0}$, where
\begin{itemize}
\item $I_0\subset\T$ is an arc containing $\overline{V}_{N_0}\cap\T$ with $\sigma(I_0)=3r/4$; and
\item Each $W_i$ with $1\leqslant i\leqslant M_0$ is compactly contained in $\EC\setminus\overline{\D}$ satisfying $\overline{W}_i \cap \MP(G)\subset\widehat{\MP}(G)$ with $\sharp\,(\overline{W}_i \cap \MP(G))\leqslant  1$ and $\widehat{\MP}(G)$ is defined in \eqref{equ:P-G-hat}.
\end{itemize}

Since there are only finitely many choices of $V_{N_0}$ among all pullback sequences of $V_0$ in $\EC\setminus\overline{\D}$, it follows that the number $M_0\geqslant 1$ above can be chosen uniformly for all pullback sequences of $V_0$.
Note that every $W_i$ is compactly contained in a bigger Jordan disk $\widetilde{W}_i$, such that for any $n> N_0$, the degree of the restriction of $G^{n-N_0}$ on any connected component $\widetilde{W}_i^n$ of $G^{-(n-N_0)}(\widetilde{W}_i)$ in $\EC\setminus\overline{\D}$ has uniform upper bound, where $\widetilde{W}_i^n$ satisfies $G^k(\widetilde{W}_i^n)\subset\EC\setminus\overline{\D}$ for all $1\leqslant  k\leqslant  n-N_0$.
By Lemma \ref{lem:semi-hyperbolic}, there exists a uniform $N> N_0$, such that if $n\geqslant N$, every component of $G^{-(n-N_0)}(W_i)$ in $\widetilde{W}_i^n$ has spherical diameter less than $\varepsilon/(3M_0)$. Combining this with Lemma \ref{lem:back-1}, it follows that if $n\geqslant N$, for any pullback sequence $\{V_n\}_{n\geqslant 0}$ of $V_0$ in $\EC\setminus\overline{\D}$, we have
\begin{equation}
\diam_{\EC}(V_n)<\frac{\varepsilon}{2}+\frac{\varepsilon}{3M_0}\cdot M_0<\varepsilon.
\end{equation}
The proof is complete.
\end{proof}

\section{Proof of the Main Theorem}\label{sec-rational-case}

To prove the local connectivity of a Julia set in $\EC$, we use the following criterion (see {\cite[Theorem 4.4, p.\,113]{Why42}}).

\begin{lem}[LC criterion]\label{lem:LC-criterion}
A compact subset $X$ in $\EC$ is locally connected if and only if the following two conditions hold:
\begin{enumerate}
\item  The boundary of every component of $\EC\setminus X$ is locally connected; and
\item  For any given $\varepsilon>0$, there are only finitely many components of $\EC\setminus X$ whose spherical diameters are greater than $\varepsilon$.
\end{enumerate}
\end{lem}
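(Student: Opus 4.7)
\medskip

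The plan is to prove both directions, relying on standard plane-topology arguments; I will think of $X$ as a compact metric space with the spherical metric inherited from $\EC$.

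For the forward direction, suppose $X$ is locally connected. Property (a) follows from the classical fact that if $X$ is a locally connected compactum in $\EC$, then for every complementary component $U$, the closure $\overline{U}$ is a locally connected continuum (one proves this by showing that a sequence of arcs in $\overline{U}$ converging to a point must eventually have small diameter, using a connected neighborhood basis in $X$); hence $\partial U=\overline{U}\cap X$ is locally connected. For property (b), I argue by contradiction: suppose there are infinitely many distinct components $\{U_n\}$ of $\EC\setminus X$ with $\diam_{\EC}(U_n)>\varepsilon$. Pick $x_n,y_n\in U_n$ with $\dist_{\EC}(x_n,y_n)>\varepsilon/2$ and extract subsequences so that $x_n\to x$, $y_n\to y$ with $\dist_{\EC}(x,y)\geq\varepsilon/2$. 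Since the $U_n$ are pairwise disjoint open sets, $x,y\in X$. By local connectivity, choose a connected relatively open neighborhood $V$ of $x$ in $X$ with $\diam_{\EC}(V)<\varepsilon/4$; by standard plane topology, $V$ extends to a connected open set $\widetilde V\subset\EC$ containing $x$ with $\diam_{\EC}(\widetilde V)<\varepsilon/3$ and $\widetilde V\setminus X$ meeting only those components of $\EC\setminus X$ that are entirely contained in $\widetilde V$. For large $n$, $x_n\in\widetilde V$, so $U_n\subset\widetilde V$, forcing $\diam_{\EC}(U_n)<\varepsilon/3$, a contradiction.

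For the backward direction, assume (a) and (b); fix $x\in X$ and $\varepsilon>0$. By (b), only finitely many components $U_1,\dots,U_N$ of $\EC\setminus X$ satisfy $\diam_{\EC}(U_i)\geq\varepsilon/8$. Choose a spherical disk $D$ around $x$ with $\diam_{\EC}(D)<\varepsilon/8$. Among $U_1,\dots,U_N$, reindex so that $U_1,\dots,U_k$ are those meeting $D$; each such $U_i$ has $x\in\overline{U_i}$ or $\partial U_i\cap D\ne\emptyset$. For each such $i$ with $x\in\partial U_i$, apply (a) to pick a connected neighborhood $A_i$ of $x$ in $\partial U_i$ with $\diam_{\EC}(A_i)<\varepsilon/8$. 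Now form
\[
W := (D\cap X)\cup\bigcup\{U:U\text{ is a component of }\EC\setminus X,\;U\cap D\ne\emptyset,\;\diam_{\EC}(U)<\varepsilon/8\}\cup\bigcup_{i\le k}A_i.
\]
Then $\diam_{\EC}(W)<\varepsilon$, and $W\cap X\supset(D\cap X)$ is a neighborhood of $x$ in $X$. The final task is to replace $W\cap X$ by a \emph{connected} subneighborhood of $x$: take the connected component of $W\cap X$ containing $x$; using that $D$ is connected, that each $A_i$ is connected through $x$, and that small complementary components were filled into $W$, a standard argument shows this component is a neighborhood of $x$ in $X$.

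The main obstacle is the backward direction, specifically ensuring that the constructed neighborhood is actually \emph{connected} rather than merely small. The delicate case is when $x$ lies on the boundary of some large $U_i$: a priori, $X$ could have thin ``fingers'' near $x$ that are separated by $U_i$, and connectivity is only restored by using the local connectedness of $\partial U_i$ provided by (a) to produce the connecting arcs $A_i$. Verifying that the union above is genuinely locally connected at $x$—rather than just having small diameter—is the technical heart of the criterion, and is where Whyburn's original proof invokes the structure of cyclic elements.
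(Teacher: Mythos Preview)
The paper does not prove this lemma at all: it simply cites \cite[Theorem 4.4, p.\,113]{Why42} and uses the result as a black box. So there is no ``paper's proof'' to compare against; you are attempting to supply what the paper deliberately outsources.

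That said, your sketch has genuine gaps in both directions. In the forward direction for (b), the sentence ``$V$ extends to a connected open set $\widetilde V\subset\EC$ \dots\ meeting only those components of $\EC\setminus X$ that are entirely contained in $\widetilde V$'' is precisely the nontrivial content: nothing you have said explains why such a $\widetilde V$ exists, and in fact constructing it is essentially equivalent to knowing (b) already near $x$. A correct argument here typically goes through the fact that each $\partial U_n$ is a continuum and uses local connectivity of $X$ to produce a small connected neighborhood in $X$ that, together with a separation argument in $\EC$, traps nearby complementary components; you have not supplied that separation step.

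In the backward direction you are honest that the last step---showing the connected component of $W\cap X$ containing $x$ is actually a \emph{neighborhood} of $x$ in $X$---is the heart of the matter, but ``a standard argument shows'' is not a proof. This is exactly where Whyburn's machinery (or an equivalent careful argument using the local connectivity of each $\partial U_i$ to build arcs joining nearby points of $X$ through the $A_i$) is needed, and your construction of $W$ as written does not obviously yield it: for instance, the sets $A_i$ for those $U_i$ with $x\notin\partial U_i$ but $\partial U_i\cap D\neq\emptyset$ are never defined, yet such $U_i$ can disconnect $D\cap X$. If you want a self-contained proof, you must either fill in this connectivity argument explicitly or cite Whyburn as the paper does.
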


Let $f$ be a rational map in the Main Theorem. By the assumption, the periodic Fatou components of $f$ can only be attracting basins (including super-attracting) or bounded type Siegel disks and the periodic points in $J(f)$ are all repelling.
Moreover, $f$ has at least one cycle of bounded type Siegel disks.

\subsection{Local connectivity of the boundaries of attracting basins}

In this section, we assume that $f$ has at least one attracting basin. The main goal in this section is to prove the following result:

\begin{prop}\label{prop:attracting}
The boundary of every immediate attracting basin of $f$ is locally connected.
\end{prop}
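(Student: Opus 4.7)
The plan is to apply Carath\'eodory's theorem: for any immediate attracting basin $U$ of $f$, the Riemann map $\psi:\D\to U$ extends continuously to $\overline{\D}$ if and only if $\partial U$ is locally connected. Accordingly, I will show that a natural system of ``equipotential'' curves inside $U$ converges uniformly to Jordan curves as the potential tends to the boundary.

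First, by replacing $f$ by $F:=f^p$, where $p$ is the period of $U$, I may assume $U$ is $F$-invariant with an attracting (possibly super-attracting) fixed point $\alpha\in U$. B\"ottcher or K\oe nigs coordinates at $\alpha$ produce a fundamental annulus $A_0\Subset U$ bounded by two equipotentials of $F$. Let $A_n$ denote the union of connected components of $F^{-n}(A_0)$ contained in $U$; then $\bigcup_{n\geq 0} A_n = U\setminus\{\alpha\}$, and the desired continuous extension of $\psi$ reduces to proving that the spherical diameters of the components of $A_n$ tend to zero uniformly in the component as $n\to\infty$.

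To exhibit this uniform shrinkage, I would transfer the dynamics outside each bounded type Siegel disk $\Delta$ to the quasi-Blaschke model $G$ constructed in \S\ref{sec-basic-tools}. For those components of $A_n$ that lie close to some Siegel cycle, the distance hypothesis $\dist_{\EC}(\MP(f)\setminus\partial\Delta,\partial\Delta)>0$ allows me to enclose each such component in a Jordan disk whose closure meets $\MP(f)$ only in $\partial\Delta$, so the Main Lemma forces the spherical diameters of all deep pull-backs of this disk to tend to zero uniformly. Components which stay bounded away from all Siegel cycles are instead handled by the classical Shrinking Lemma \ref{lem:semi-hyperbolic}, since the relevant postcritical orbits there accumulate only at the attracting cycle inside $U$.

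The main obstacle, as previewed in the introductory sketch, is that a component of $A_n$ may wind many times around a Siegel disk before becoming geometrically small, so that a single application of the Main Lemma need not cover such a component by uniformly many Jordan pieces. To control this I plan to invoke the forthcoming Lemma \ref{lem:essential-bd}, which provides a uniform upper bound --- independent of $n$ and of the component --- on the ``homotopy complexity'' of pull-backs of an internal ray of $U$ with respect to each bounded type Siegel cycle. The heuristic is that each extra essential winding around $\partial\Delta$ would trap an additional critical value of $f$, contradicting the finiteness of $\Crit(f)$; formalising this combinatorial bound will be the main technical task. Granted the complexity bound, each component of $A_n$ can be covered by uniformly many Jordan pieces of the type required by the Main Lemma, uniform spherical contraction of $A_n$ follows, and $\partial U$ is locally connected.
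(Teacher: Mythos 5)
Your overall skeleton is the same as the paper's: reduce local connectivity of $\partial U$ to uniform shrinking of equipotential/annular pieces deep in the basin, handle pieces near a Siegel boundary with the Main Lemma and the remaining pieces with the Shrinking Lemma, and control the winding of pull-backs around Siegel disks by a uniform bound on the homotopy complexity of internal rays (Lemma \ref{lem:essential-bd}). The problem is that you leave exactly this last ingredient unproved, and it is the whole point: the paper itself flags it as the subtle step, and your proposed mechanism for it is not the right one. You suggest that ``each extra essential winding around $\partial\Delta$ would trap an additional critical value of $f$, contradicting the finiteness of $\Crit(f)$.'' This does not work: homotopy complexity is measured rel endpoints in $\EC\setminus\MP(f)$, and $\partial\Delta$ itself lies in $\MP(f)$, so an arc can wind around a Siegel disk arbitrarily many times without enclosing any additional critical value; there is no combinatorial contradiction with $\sharp\,\Crit(f)<\infty$. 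A priori the number of essential windings of the pulled-back ray segments could grow with the depth $n$, which is precisely the spiralling phenomenon described in the introduction.

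What actually bounds the complexity in the paper is metric, not combinatorial. After a quasiconformal surgery making $f$ postcritically finite in the attracting basins (so that $\MP_1(f)$ in \eqref{equ:post-finite} is finite and the basins are super-attracting), one equips $W_1$ with an orbifold metric $\sigma$ for which $f^{-1}$ is a uniform contraction on compact subsets (Lemma \ref{lem:orbifold}). Lemma \ref{lem:essential-bd} is then proved by induction: the pull-back of the homotopy representative has $\sigma$-length at most $\mu$ times the previous one plus the bounded contribution of $R_{\theta,1}$, and an explicit homotopy deformation pushes the curve out of the $\varepsilon$-collars $Y_p$ of the Siegel boundaries at the cost of a bounded additive term and a short ``end'' piece, giving the recursion $l_\sigma(\widetilde{R}_{n+1}^{ess})< C_0+\mu\,l_\sigma(\widetilde{R}_n^{ess})+C_3$ and hence the uniform bound $C'=(C_0+C_3)/(1-\mu)$. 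Your proposal contains neither the surgery/orbifold expansion set-up nor a substitute for it, so as written the proof has a genuine gap at its central step; if instead you intend simply to quote Lemma \ref{lem:essential-bd} as a black box, then the remainder of your argument (covering the bounded-length essential part by $M(\varepsilon)$ disks of diameter $<\delta_0$ and applying Lemma \ref{lem:contrac}) does match the paper's proof.
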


Note that $J(f)$ is connected by the assumption. Without loss of generality, we assume that
\begin{itemize}
\item[(i)] All periodic Fatou components of $f$ have period one and they consist of $p_0\geqslant 1$ fixed Siegel disks $\Delta_p$ with $1\leqslant  p\leqslant  p_0$ and $q_0\geqslant 1$ fixed immediate attracting basins $A_q$ with $1\leqslant  q\leqslant  q_0$ (Considering an appropriate iteration of $f$ if necessary); and
\item[(ii)] $f$ is postcritically finite in the attracting basins (By a standard quasiconformal surgery \cite[Chapter 4]{BF14a} since every Fatou component of $f$ is simply connected).
\end{itemize}
By the assumption (ii), we conclude that every immediate attracting basin $A_q$ is super-attracting and contains exactly one critical point $a_q$ (without counting multiplicity).
Let $\MP(f)$ be the postcritical set of $f$. Then the following set is finite or empty:
\begin{equation}\label{equ:post-finite}
\MP_1(f):=\MP(f)\setminus\Big(\bigcup_{p=1}^{p_0} \overline{\Delta}_p\cup \bigcup_{q=1}^{q_0}\{a_q\}\Big).
\end{equation}
For every $1\leqslant  q\leqslant  q_0$, there exists a small quasi-disk $B_q$ containing $a_q$ such that $f(\overline{B}_q)\subset B_q$. Denote
\begin{equation}\label{equ:W}
W:=\EC\setminus\Big(\MP(f)\cup\bigcup_{p=1}^{p_0}\overline{\Delta}_p \cup\bigcup_{q=1}^{q_0}\overline{B}_q\Big)
\text{\quad and\quad}
W_1:=W\cup\MP_1(f).
\end{equation}
Then $f^{-1}(W)\subset W$ and $f^{-1}(W_1)\subset W_1$. According to \cite{Zha11}, every $\partial\Delta_p$ is a quasi-circle and $\overline{\Delta}_{p'}\cap\overline{\Delta}_{p''}=\emptyset$ for any different integers $p',p''\in[1,p_0]$.

\begin{lem}\label{lem:contrac}
There exists $\delta_0>0$ such that for any $\varepsilon>0$, there exists an integer $N\geqslant 1$, such that for any Jordan disk $V_0$ in $W_1$ with $\diam_{\EC}(V_0)< \delta_0$, we have $\diam_{\EC}(V_n)<\varepsilon$ for all $n\geqslant N$, where $V_n$ is any component of $f^{-n}(V_0)$.
\end{lem}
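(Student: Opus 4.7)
I would prove this via a finite-cover compactness argument combining the Main Lemma (for pull backs close to Siegel disk boundaries) with Lemma~\ref{lem:semi-hyperbolic} (which handles the remaining cases once a uniform degree bound is established). Since $\MP_1(f)$ is finite, the $\partial\Delta_p$ are pairwise disjoint quasi-circles, and $\MP_1(f)$ is disjoint from $\bigcup_p\overline{\Delta}_p\cup\bigcup_q\overline{B}_q$, I first pick $\delta^*>0$ so that the $3\delta^*$-neighborhoods of $\MP_1(f)$, of the $\partial\Delta_p$, and of the $\overline{B}_q$ are pairwise disjoint. I cover the compact set $\overline{W_1}$ by finitely many open Jordan domains $U_1,\dots,U_k$ of diameter $<\delta^*$, each of one of two types: \emph{type~(i)} with $\overline{U_j}\cap\MP(f)$ a nonempty subset of $\partial\Delta_p$ for a unique $p$, or \emph{type~(ii)} with $\overline{U_j}\cap\bigcup_p\overline{\Delta}_p=\emptyset$ and $\overline{U_j}\cap\MP(f)$ either empty or a single point $v\in\MP_1(f)$. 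Let $\delta_0>0$ be a Lebesgue number of this cover on $\overline{W_1}$.

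For a type-(i) domain $U_j$, applying the Main Lemma with $V_0:=U_j$ and $\Delta:=\Delta_p$---whose hypothesis $\dist_\EC(\MP(f)\setminus\partial\Delta_p,\partial\Delta_p)>0$ holds since the remaining components of $\MP(f)$ are separated from $\partial\Delta_p$---produces $N_j$ such that every component of $f^{-n}(U_j)$ has spherical diameter below $\varepsilon$ for all $n\geq N_j$. For a type-(ii) domain, enlarge $U_j$ slightly to $\widetilde{U}_j\Supset U_j$ preserving $\overline{\widetilde{U}}_j\cap\MP(f)=\overline{U_j}\cap\MP(f)$ and $\widetilde{U}_j\cap\bigcup_p\overline{\Delta}_p=\emptyset$ (so that $\widetilde{U}_j$ lies in no rotation domain), and apply Lemma~\ref{lem:semi-hyperbolic} to $U_j\Subset\widetilde{U}_j$. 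The key input is a uniform upper bound $D_j$ on $\deg(f^n\colon\widetilde{U}_{j,n}\to\widetilde{U}_j)$ over all pull back sequences. When $\overline{\widetilde{U}}_j\cap\MP(f)=\emptyset$, Lemma~\ref{lem:take-pre} yields $D_j=1$. When $\overline{\widetilde{U}}_j\cap\MP(f)=\{v\}\subset\MP_1(f)$, every critical point $w$ of $f^n$ in $\widetilde{U}_{j,n}$ must satisfy $f^n(w)=v$, and its local degree $\deg_w(f^n)=\prod_{0\leq i<n}\deg_{f^i(w)}(f)$ receives contributions only from indices $i$ with $f^i(w)\in\Crit(f)$. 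Under the Main Theorem's structural hypothesis---the only periodic Fatou components of $f$ are the Siegel disks $\Delta_p$ and super-attracting basins $A_q$---any repelling periodic cycle in $J(f)\cap\MP(f)$ must be disjoint from $\Crit(f)$, since a critical point on such a cycle would force the cycle to be super-attracting and hence Fatou. Consequently the forward orbit of $w$ meets $\Crit(f)$ only finitely many times before reaching the (possibly periodic) point $v$, yielding a uniform $D_j<\infty$.

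Setting $N:=\max_{1\leq j\leq k}N_j$, any Jordan domain $V_0\subset W_1$ with $\diam_\EC(V_0)<\delta_0$ lies in some $U_j$, and every component of $f^{-n}(V_0)$ is contained in a component of $f^{-n}(U_j)$, so it has spherical diameter $<\varepsilon$ for $n\geq N$. The main obstacle I foresee is the uniform degree bound in type~(ii) with $v\in\MP_1(f)$: it requires careful bookkeeping of how critical points of $f^n$ accumulate inside a single pull back component, combined with the combinatorial observation above that repelling postcritical cycles cannot contain critical points.
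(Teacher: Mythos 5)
Your plan is essentially the paper's proof: the paper covers $W_1$ by finitely many Jordan domains, each either meeting $\MP(f)$ only inside some $\partial\Delta_p$ or containing at most one point of $\MP_1(f)$, takes $\delta_0$ as a Lebesgue-type number for this cover, applies the Main Lemma to the first type and the Shrinking Lemma to the second, and sets $N$ to be the maximum of the resulting constants. Your extra discussion of the uniform degree bound (via the observation that repelling postcritical cycles contain no critical points) supplies a detail the paper leaves implicit; just be aware that the Shrinking Lemma needs a bound on the degree of $f^n$ on each pull back component, i.e.\ on the number of critical passages along a chain and not merely on local degrees, and when $v\in\MP_1(f)$ is itself a repelling periodic point this requires the standard shrinking argument at the cycle --- precisely the bookkeeping you flag as the remaining obstacle.
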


\begin{proof}
Note that $W_1$ is a domain whose boundary consists of $p_0+q_0$ quasi-circles. Then there exist finitely many Jordan disks $\{U_k:1\leqslant  k\leqslant  k_0\}$ in $W_1$ and a small number $\delta_0>0$ such that
\begin{itemize}
\item[(i)] $W_1=\bigcup_{k=1}^{k_0} U_k$;
\item[(ii)] For every $1\leqslant  k\leqslant  k_0$, $\sharp(U_k\cap \MP(f))=\sharp(U_k\cap \MP_1(f))\leqslant  1$ or $\overline{U}_k\cap \MP(f)\subset\partial \Delta_p$ for some $1\leqslant  p\leqslant  p_0$; and
\item[(iii)] Any Jordan disk $V_0$ in $W_1$ with $\diam_{\EC}(V_0)< \delta_0$ is contained in some $U_k$ with $1\leqslant  k\leqslant  k_0$.
\end{itemize}
Indeed, let $D$ be a domain in $\EC$ whose boundary consists of $p_0+q_0$ spherical circles and let $\varsigma:\overline{D}\to\overline{W}_1$ be a homeomorphism. By the finiteness of $\MP_1(f)$ in $W_1$, there exists a partition $D=\bigcup_{k=1}^{k_0} \widetilde{U}_k$ such that (i) and (ii) hold for $\{U_k=\varsigma(\widetilde{U}_k):1\leqslant k\leqslant k_0\}$, where each $\widetilde{U}_k$ is a Jordan disk. Moreover, $\{\widetilde{U}_k:1\leqslant k\leqslant k_0\}$ can be chosen further such that if $I$ is an arc on $\partial D$ with the Euclidean length less than a constant $\nu_0>0$, then $I$ is contained in $\partial\widetilde{U}_k\cap\partial D$ for some $1\leqslant k\leqslant k_0$.
Adopting the proof by contradiction which is similar to the Lebesgue number theorem, there exists $\widetilde{\delta}_0>0$ such that any Jordan disk $\widetilde{V}_0$ in $D$ with $\diam_{\EC}(\widetilde{V}_0)< \widetilde{\delta}_0$ is contained in some $\widetilde{U}_k$ with $1\leqslant  k\leqslant  k_0$.
Then the existence of $\delta_0>0$ in (iii) follows by the uniform continuity of $\varsigma^{-1}:\overline{W}_1\to\overline{D}$.

Let $\varepsilon>0$ be given. By Lemma \ref{lem:semi-hyperbolic} and the Main Lemma, there exists $N\geqslant 1$ such that for all $n\geqslant N$, any component of $f^{-n}(U_k)$ with $1\leqslant  k\leqslant  k_0$ has spherical diameter less than $\varepsilon$.
\end{proof}

Let $\N_+:=\{1,2,3,\cdots\}$ be the set of all positive integers. We use the following definition from \cite[Appendix E]{Mil06}.

\begin{defi}[Orbifolds]
A pair $(S,\nu)$ consisting of a Riemann surface $S$ and a ramification function $\nu:S\to\N_+$ which takes the value $\nu(z)=1$ except on a discrete closed subset is called a \textit{Riemann surface orbifold} (\textit{orbifold} in short). A point $z\in S$ is called a \textit{ramified point} if $\nu(z)\geqslant 2$.
\end{defi}

A map $h:(\widetilde{S},\widetilde{\nu})\to (S,\nu)$ between two orbifolds is called a \textit{branched covering} if $h:\widetilde{S}\to S$ is a branched covering and $\nu(h(\zeta))=\deg_{\zeta}(h)\,\widetilde{\nu}(\zeta)$ for all $\zeta\in \widetilde{S}$, where $\deg_{\zeta}(h)$ is the local degree of $h$ at $\zeta$. In particular, if $\widetilde{S}$ is simply connected and $\widetilde{\nu}\equiv 1$ on $\widetilde{S}$, then $h:\widetilde{S}\to (S,\nu)$ is called a \textit{universal covering}.

\medskip
For the hyperbolic Riemann surface $W_1$ introduced in \eqref{equ:W}, we define an orbifold $(W_1,\nu)$ as following.
If $z\in W_1\setminus\MP_1(f)$, define $\nu(z)=1$. If $z\in\MP_1(f)$, define $\nu(z)$ as the least common multiple of $\{\deg_{\zeta}(f)\,\nu(\zeta): \zeta\in f^{-1}(z)\}$. Since $\MP_1(f)$ is a finite set, it is easy to see that $(W_1,\nu)$ is an orbifold.

According to \cite[Theorem E.1]{Mil06}, the orbifold $(W_1,\nu)$ has a universal covering $\pi:\D\to (W_1,\nu)$. Let $W_1'\subset W_1$ be any connected component of $f^{-1}(W_1)$. Note that $f:W_1'\to W_1$ is a branched covering between Riemann surfaces and $\nu(f(\zeta))$ is some integral multiple of $\deg_\zeta(f)\nu(\zeta)$ for all $\zeta\in W_1'$. By \cite[Section 19, pp.\,212--213]{Mil06}, $f^{-1}$ lifts to a single-valued holomorphic map $\widetilde{F}:\D\to\D$ such that
\begin{equation}
f\circ\pi\circ \widetilde{F}(w)=\pi(w) \text{\quad for all }w\in\D.
\end{equation}
Since $f$ has repelling periodic points in $W_1$, it follows that $\widetilde{F}:\D\to\D$ decreases the hyperbolic metric $\rho_{\D}(w)|dw|$ in $\D$. The universal covering $\pi:\D\to W_1$ induces an \textit{orbifold metric} $\sigma(z)|dz|$ in $W_1$ satisfying
\begin{equation}
\sigma(\pi(w))|\pi'(w)|=\rho_{\D}(w).
\end{equation}

The following result shows that $f^{-1}$ contracts the orbifold metric $\sigma(z)|dz|$ in $W_1$. See \cite[Section 19, p.\,213]{Mil06}.

\begin{lem}\label{lem:orbifold}
For any compact subset $K$ of $f^{-1}(W_1)$, there exists a number $\lambda=\lambda(K)>1$ such that for any $z\in K$, if $z$ and $f(z)$ are not ramified points, then
\begin{equation}
\sigma(f(z))|f^{\#}(z)|\geqslant \lambda\,\sigma(z),
\end{equation}
where $f^{\#}$ denotes the spherical derivative.
\end{lem}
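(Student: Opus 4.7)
The plan is to pull back the Schwarz--Pick inequality from the universal orbifold cover $\pi\colon \D \to (W_1,\nu)$ onto the orbifold metric on $W_1$, and then to upgrade the resulting non-expansion to a strict contraction with a uniform constant by ruling out that the lift $\widetilde F$ can be a M\"obius automorphism of $\D$. For the pointwise step, fix $z \in K$ with $z$ and $f(z)$ both unramified, choose $w \in \D$ with $\pi(w) = f(z)$, and set $w' := \widetilde F(w)$ so that $\pi(w') = z$. Differentiating the defining identity $f \circ \pi \circ \widetilde F = \pi$ at $w$ gives $f'(z)\,\pi'(w')\,\widetilde F'(w) = \pi'(w)$, and substituting $|\pi'(w)| = \rho_\D(w)/\sigma(f(z))$ and $|\pi'(w')| = \rho_\D(w')/\sigma(z)$ rearranges to
\begin{equation*}
\sigma(f(z))\,|f'(z)| \,=\, \sigma(z) \cdot \frac{\rho_\D(w)}{\rho_\D(\widetilde F(w))\,|\widetilde F'(w)|}.
\end{equation*}
Schwarz--Pick for the holomorphic self-map $\widetilde F$ of $\D$ shows the right-hand factor is at least $1$, immediately yielding the non-strict inequality $\sigma(f(z))|f'(z)| \ge \sigma(z)$.

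To promote this to a strict inequality, I would argue that $\widetilde F$ is not a M\"obius automorphism of $\D$. If it were, Schwarz--Pick would be a global equality and the displayed identity would force $\sigma(f(z))|f'(z)| = \sigma(z)$ at every unramified $z \in W_1$. However, since repelling periodic points are dense in $J(f)$ and the set $\MP(f) \cap J(f)$ consists only of the boundaries $\bigcup \partial \Delta_p$ together with a finite set, one can pick a repelling periodic point $z_0$ of some period $n$ whose entire orbit lies in $W$ (in particular is unramified). Multiplying the presumed equality around this orbit then gives $|(f^n)'(z_0)| = 1$, contradicting that $z_0$ is repelling. Therefore $\rho_\D(\widetilde F(w))|\widetilde F'(w)| < \rho_\D(w)$ strictly at every $w \in \D$.

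Finally, for the uniform constant on the compact $K$, the function $\Psi(w) := \rho_\D(\widetilde F(w))|\widetilde F'(w)|/\rho_\D(w)$ is continuous on $\D$, takes values strictly in $(0,1)$, and is invariant under the deck group of $\pi$---this uses the equivariance $\widetilde F \circ \gamma = \tau(\gamma) \circ \widetilde F$ for the induced homomorphism $\tau$ of the deck group, combined with conformality of deck transformations. Hence $\Psi$ descends to a continuous function $\psi < 1$ on $W_1$ (extending continuously across the finitely many ramified points $\MP_1(f)$), and on the compact set $K$ one has $\psi \le 1/\lambda'$ for some $\lambda' = \lambda'(K) > 1$, giving $\sigma(f(z))|f'(z)| \ge \lambda' \sigma(z)$ uniformly. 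The conversion from the Euclidean derivative $|f'|$ to the spherical derivative $|f^{\#}|$ multiplies by $(1+|z|^2)/(1+|f(z)|^2)$, which is bounded between positive constants on the compact pair $(K,f(K))$; absorbing this yields the desired $\lambda = \lambda(K) > 1$. The main delicate point is verifying the deck-equivariance required to descend $\Psi$ to $W_1$; the rest is a routine continuity and compactness argument.
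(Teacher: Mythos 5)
Your route --- Schwarz--Pick for the lift $\widetilde F$, ruling out that $\widetilde F$ is a M\"obius automorphism, then a continuity-plus-compactness argument --- is exactly the argument of the source the paper itself cites for this lemma (Milnor, \S 19); the paper gives no independent proof. Your pointwise computation is correct, and the strictness step is sound in spirit: note, though, that if $\widetilde F$ were an automorphism then $\pi\circ\widetilde F$ would map $\D$ onto $W_1$, whereas its image is a component of $f^{-1}(W_1)\subsetneq W_1$, which gives the contradiction even more directly; your repelling-orbit argument also works, using that the quasicircle boundaries $\partial\Delta_p$ carry no periodic points and that only finitely many cycles can meet the finite set $\MP_1(f)$.

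The genuine gap is in the uniformity step. The equivariance $\widetilde F\circ\gamma=\tau(\gamma)\circ\widetilde F$ for \emph{every} deck transformation $\gamma$ of $\pi$ is not justified and is false in general: $\pi\circ\widetilde F\circ\gamma$ and $\pi\circ\widetilde F$ are two lifts of $\pi$ through the covering $f\colon W_1'\to W_1$, and since that covering need not be normal they need not differ by a M\"obius map. Worse, the conclusion you would draw from it cannot hold: if $\Psi$ were invariant under the full deck group of $\pi$, it would descend to a function of $\pi(w)=f(z)$, which by your displayed identity would force $\sigma(f(z))|f'(z)|/\sigma(z)$ to take the same value at \emph{all} preimages $z$ of a given point; but this ratio equals $\sigma'(z)/\sigma(z)$, where $\sigma'$ is the orbifold metric of the component $W_1'$ with the pulled-back ramification, and there is no reason for it to be constant along fibres of $f$. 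The repair is to descend along $h:=\pi\circ\widetilde F$ instead of $\pi$: your identity shows $\Psi(w)$ depends only on $h(w)\in W_1'$ (on the dense set of unramified points, hence everywhere by continuity), so $\Psi$ induces a continuous function strictly less than $1$ on $W_1'$; doing this for each of the finitely many components of $f^{-1}(W_1)$ meeting $K$ and using compactness gives $\lambda(K)>1$. Two smaller points: you must choose $w$ so that $\pi(\widetilde F(w))=z$ (an arbitrary $w\in\pi^{-1}(f(z))$ only produces \emph{some} preimage of $f(z)$; use that $\pi\circ\widetilde F$ is a covering onto the component containing $z$), and the passage from $|f'|$ to the spherical derivative $f^{\#}$ cannot be handled by ``absorbing'' a factor merely bounded between positive constants --- that could destroy $\lambda>1$; instead run the same Schwarz--Pick computation with $\sigma$ read as a density with respect to the spherical metric, which yields the $f^{\#}$ statement verbatim.
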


For a given immediate super-attracting basin $A:=A_q$, where $1\leqslant  q\leqslant  q_0$, there exists a conformal map $\psi: \EC\setminus\overline{\D}\to A$ which conjugates $\zeta\mapsto \zeta^u: \EC\setminus\overline{\D}\to \EC\setminus\overline{\D}$ to $f: A\to A$ for some integer $u\geqslant 2$.
For $r>0$ and $\theta\in\R/\Z$, we denote
\begin{equation}
R_\theta(r)=E_r(\theta):=\psi(e^{r+2\pi\ii\theta}).
\end{equation}
The images
\begin{equation}
R_\theta :=R_\theta\big((0,+\infty)\big) \text{\quad and\quad} E_r:=E_r(\R/\Z)
\end{equation}
are the \textit{internal ray} of angle $\theta$ and the \textit{equipotential curve} of potential $r>0$ in $A$ respectively.

\begin{defi}[Ray segments]
For every $\theta\in\R/\Z$ and integer $n\geqslant 1$, the curve
\begin{equation}
R_{\theta,n}:=R_\theta\big([\tfrac{1}{u^n},1]\big)
\end{equation}
is called a \textit{ray segment} in $A$.
\end{defi}

\begin{lem}\label{lem:essential-bd}
For any $\varepsilon>0$, there exists $C=C(\varepsilon)>0$ such that for any ray segment $R_{\theta,n}$ with $\theta\in\R/\Z$ and $n\geqslant 1$, there exists a continuous curve $\widetilde{R}_n$ which is homotopic to $R_{\theta,n}$ in $\EC\setminus\MP(f)$ relative to their end points,  and moreover, $\widetilde{R}_n$ is the union of two continuous curves $\widetilde{R}_n^{ess}$ and $\widetilde{R}_n^{end}$ satisfying
\begin{equation}
l_{\EC}(\widetilde{R}_n^{ess})< C \text{\quad and\quad} \diam_{\EC}(\widetilde{R}_n^{end})<\varepsilon,
\end{equation}
where $l_{\EC}(\cdot)$ denotes the length with respect to the spherical metric.
\end{lem}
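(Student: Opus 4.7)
The plan is to take $\widetilde R_n := R_{\theta,n}$ itself (no nontrivial homotopy is actually required for this argument) and to split the ray at a well-chosen intermediate potential $1/u^{m_0}$, where $m_0 = m_0(\varepsilon) \geq 1$ will be chosen large depending on $\varepsilon$. Concretely, for $n > m_0$ I set
\[
\widetilde R_n^{ess} := R_\theta([1/u^{m_0}, 1]), \qquad \widetilde R_n^{end} := R_\theta([1/u^n, 1/u^{m_0}]),
\]
joined at the intermediate point $R_\theta(1/u^{m_0})$; for $n \leq m_0$ I take $\widetilde R_n^{end}$ to be the single point $\{R_\theta(1/u^n)\}$ and $\widetilde R_n^{ess} := R_{\theta,n}$.

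For the essential part, the bound is a compactness fact: $R_\theta([1/u^{m_0}, 1])$ is the image under the conformal map $\psi:\EC\setminus\overline{\D}\to A_q$ of the radial segment $\{e^{r + 2\pi\ii\theta} : r \in [1/u^{m_0}, 1]\}$, and $|\psi'|_{sph}$ is uniformly bounded on the compact closed annulus $\{e^{1/u^{m_0}} \leq |\zeta| \leq e\}$. Hence $l_\EC(\widetilde R_n^{ess})$ is dominated by a single constant $C = C(m_0)$ independent of $\theta$.

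The heart of the argument is the uniform diameter bound on the end part. I will write $\widetilde R_n^{end} = \bigcup_{k=m_0}^{n-1} \Gamma_k$ with $\Gamma_k := R_\theta([1/u^{k+1}, 1/u^k])$; each $\Gamma_k$ is an unramified $f^k$-preimage along a specific inverse branch of $R_{u^k\theta}([1/u, 1])$, which lies in the fixed compact annulus $K := A_q \cap \{1/u \leq r \leq 1\} \subset A_q \setminus \overline{B}_q \subset W_1$ (unramified because $a_q \in \overline B_q$ is the only critical point of $f$ inside $A_q$). I will cover the compact family $\{R_\phi([1/u,1]) : \phi \in \R/\Z\}$ by finitely many Jordan pairs $V_{0,j} \Subset U_{0,j}$ with $\overline{U}_{0,j}$ disjoint from $\MP(f)$, and apply the Shrinking Lemma (Lemma \ref{lem:semi-hyperbolic}) with degree bound $D = 1$ to obtain $\sup_\theta \diam_\EC(\Gamma_k) \to 0$ as $k \to \infty$. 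To upgrade this convergence to a summable estimate, I will invoke the orbifold metric $\sigma$ of $(W_1, \nu)$: by Lemma \ref{lem:orbifold}, $f$ strictly expands $\sigma$ by a factor $\lambda > 1$ on compact subsets of $f^{-1}(W_1)$, so the $\sigma$-lengths of the fundamental images $R_{u^k\theta}([1/u,1]) \subset K$ (uniformly bounded by some $M$) pull back to $\sigma$-length$(\Gamma_k) \leq M \lambda^{-k}$. Converting $\sigma$-length to spherical diameter (using that $\sigma$ is comparable to the spherical metric on compact parts of $W_1$ and blows up toward $\partial W_1 = \bigcup_p \partial\Delta_p \cup \bigcup_q \partial B_q$, which confines short $\sigma$-curves to small spherical regions) gives $\diam_\EC(\Gamma_k) \leq C_1 \lambda^{-k}$, hence $\diam_\EC(\widetilde R_n^{end}) \leq C_1 \lambda^{-m_0}/(1-\lambda^{-1}) < \varepsilon$ for $m_0$ sufficiently large.

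The main obstacle I foresee is the uniform quantitative control on the orbifold contraction for those pull backs in which $\Gamma_k$ approaches a Siegel disk boundary $\partial \Delta_p$, since the compact-subset hypothesis of Lemma \ref{lem:orbifold} may degrade there. I expect to handle this by invoking the Main Lemma in place of (or in combination with) the Shrinking Lemma on Jordan domain covers $(V_{0,j}, U_{0,j})$ whose closures meet a single $\partial \Delta_p$, following the hybrid scheme already developed in \S\ref{subsec:key-thm}.
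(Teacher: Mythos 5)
Your choice $\widetilde R_n:=R_{\theta,n}$ (trivial homotopy, split at a fixed intermediate potential) founders at the step where you upgrade the Shrinking Lemma to the summable estimate $\sigma\text{-length}(\Gamma_k)\le M\lambda^{-k}$. Lemma \ref{lem:orbifold} only provides an expansion factor $\lambda=\lambda(K)>1$ on a \emph{compact} subset $K$ of $f^{-1}(W_1)$, and the intermediate curves in the backward orbit of a fundamental segment, namely $R_{u^{j}\theta}\big([\tfrac{1}{u^{k-j+1}},\tfrac{1}{u^{k-j}}]\big)$ for small $j$, are not confined to any fixed compact subset: as $k\to\infty$ they approach $\partial A_q$, which may accumulate on (indeed spiral around) the Siegel disk boundaries $\partial\Delta_p\subset\partial W_1$, where $f$ is quasi-rotational and the orbifold expansion degenerates to $1$. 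So no uniform $\lambda$ exists along these orbits and the exponential decay is unjustified. Without summability, knowing only $\sup_\theta\diam_{\EC}(\Gamma_k)\to 0$ (which your Shrinking-Lemma argument with $D=1$ does give, and which the Main Lemma can also give near $\partial\Delta_p$) does not bound $\diam_{\EC}\big(\bigcup_{k\ge m_0}\Gamma_k\big)$: infinitely many small segments can wind around a Siegel disk, so the union can have diameter at least $\diam_{\EC}(\Delta_p)$ no matter how large $m_0$ is. This is precisely the phenomenon the lemma is designed to neutralize, and it is why the statement is phrased with a homotopy: asserting that the genuine ray tail $R_\theta\big((0,\tfrac{1}{u^{m_0}}]\big)$ has uniformly small diameter is essentially the uniform landing of internal rays, i.e.\ the conclusion of Proposition \ref{prop:attracting} that this lemma is meant to feed into, so your route is close to circular. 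Your closing suggestion to substitute the Main Lemma only replaces one source of ``diameters tend to $0$'' by another; it supplies no rate and hence does not close the gap.

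The paper's proof is structured exactly to avoid needing any decay rate for individual pulled-back segments. It constructs $\widetilde R_{n+1}$ inductively by pulling back the previously constructed homotopic representative $\widetilde R_n$ (not the ray itself), and then re-deforming the result, within its homotopy class rel endpoints in $W$, so that the essential part stays outside fixed collars $Y_p$ of the Siegel boundaries; on that fixed compact region Lemma \ref{lem:orbifold} yields a uniform contraction factor $\mu<1$ for the inverse branch, giving $l_\sigma(\gamma_{n+1}')<C_0+\mu C'$ and hence the bounded geometric series $C'=(C_0+C_3)/(1-\mu)$ built into the induction, while any excursion toward $\partial\Delta_p$ is absorbed into the homotopy (unwinding) together with a short segment $[z_*,z_1]$ that becomes $\widetilde R_{n+1}^{end}$. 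If you want to salvage your decomposition, you would have to prove a summable (or at least Cauchy) estimate for the tail segments near Siegel boundaries, which is not available from the tools in the paper and is in effect the hard content you are assuming.
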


\begin{proof}
Since the closures of the Siegel disks $\{\overline{\Delta}_p:1\leqslant  p\leqslant  p_0\}$ of $f$ are pairwise disjoint quasi-disks, there exists a quasiconformal mapping $\Psi:\EC\to\EC$ such that the restriction $\Psi:\EC\setminus \bigcup_{p=1}^{p_0}\overline{\Delta}_p\to \EC\setminus\bigcup_{p=1}^{p_0}\overline{D}_p$ is conformal, where $\overline{D}_1$, $\cdots$, $\overline{D}_{p_0}$ are pairwise disjoint closed spherical disks. Define
\begin{equation}
F:=\Psi\circ f\circ \Psi^{-1}, \quad \MP(F):=\Psi(\MP(f)) \text{\quad and\quad} \MP_1(F):=\Psi(\MP_1(f)).
\end{equation}
For convenience, we use the same notations in the dynamical plane of $f$ to denote the corresponding objects in the dynamical plane of the quasiregular map $F$ under $\Psi$. For example, we use $R_{\theta,n}$ (not $\Psi(R_{\theta,n})$) to denote the ray segment of $F$ etc. It suffices to prove that the lemma holds in the dynamical plane of $F$ and that $\widetilde{R}_n^{ess}$ lies in the outside of a neighborhood (depending only on the given $\varepsilon$) of $\bigcup_{p=1}^{p_0}\overline{D}_p$ in which $\Psi$ is conformal with bounded distortion.

\medskip
Without loss of generality, we assume that the corresponding domains $W$ and $W_1$ (see \eqref{equ:W}) in the dynamical plane of $F$ have the following form:
\begin{equation}\label{equ:W-new}
W=\EC\setminus\Big(\MP(F)\cup\bigcup_{p=1}^{p_0}\overline{D}_p \cup\bigcup_{q=1}^{q_0}\overline{B_{\delta'}(a_q)}\Big) \text{\quad and\quad} W_1=W\cup\MP_1(F),
\end{equation}
where $F(\overline{B_{\delta'}(a_q)})\subset B_{\delta'}(a_q)=\{z\in\EC:\dist_{\EC}(z,a_q)<\delta'\}$.
Let $E_1$ be the equipotential curve in $A$ with potential $1$. Decreasing the size of $B_{\delta'}(a_q)$ if necessary, we assume that $E_1\subset W$. Then there exists a constant $C_0>0$ such that
\begin{equation}\label{equ:l-W}
l_\sigma(R_{\theta,1})< C_0 \text{\quad for all }\theta\in \R/\Z,
\end{equation}
where $l_\sigma(\cdot)$ denotes the length with respect to the orbifold metric $\sigma(z)|dz|$ in $W_1$.

\begin{figure}[!htpb]
  \setlength{\unitlength}{1mm}
  \centering
  \includegraphics[width=0.85\textwidth]{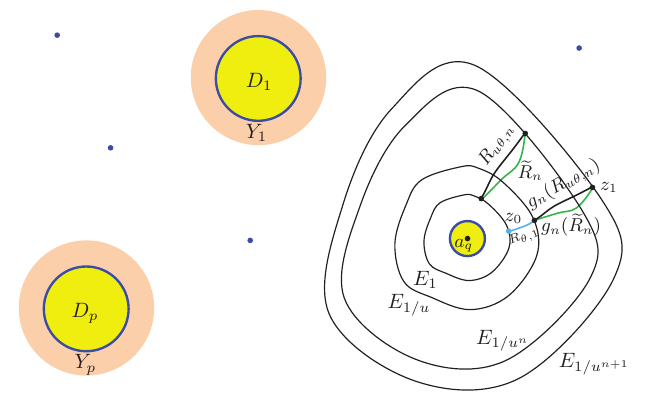}
  \caption{The dynamical plane of $F=\Psi\circ f\circ \Psi^{-1}$. The boundary $\partial W$ is marked by blue (including some spherical circles and discrete points). The annuli $\{Y_p:1\leqslant  p\leqslant  p_0\}$ and some curves in an immediate super-attracting basin are drawn. }
  \label{Fig-homotopy}
\end{figure}

\medskip
It suffices to prove the lemma for $\varepsilon>0$ which is sufficiently small.
For $1\leqslant  p\leqslant  p_0$, we define
\begin{equation}
Y_p:=\big\{z\in W: \dist_{\EC}(z,\partial D_p)<\varepsilon/2\big\}.
\end{equation}
For two continuous curves $\gamma_1$ and $\gamma_2$ in $W$ with the same end points, we denote
\begin{equation}
\gamma_1\simeq \gamma_2 \text{\quad in~~}W,
\end{equation}
if $\gamma_1$ is homotopic to $\gamma_2$ in $W$ relative to their end points.

\medskip
We prove the lemma by induction. For $n=1$, we take
\begin{equation}
\widetilde{R}_n=\widetilde{R}_n^{ess}:=R_{\theta,1} \text{\quad and\quad} \widetilde{R}_n^{end}=\emptyset.
\end{equation}
Let $D(E_1)$ be the component of $\EC\setminus E_1$ containing the super-attracting fixed point $a_q$. We assume that $\varepsilon>0$ is small enough such that $\overline{D(E_1)}\cap \overline{Y}_p=\emptyset$ for all $1\leqslant  p\leqslant  p_0$.
Suppose there exists $n\geqslant 1$ such that
\begin{itemize}
  \item[(i)] For any ray segment $R_{\theta,n}$ with $\theta\in\R/\Z$, there exists a continuous curve $\widetilde{R}_n\subset W\setminus D(E_1)$ such that $\widetilde{R}_n \simeq R_{\theta,n}$ in $W$, where $\widetilde{R}_n$ can be written as the union of two continuous curves $\widetilde{R}_n^{ess}$ and $\widetilde{R}_n^{end}$ (may be empty);
  \item[(ii)] $l_{\sigma}(\widetilde{R}_n^{ess})< C'$ and $\widetilde{R}_n^{ess}\cap Y_p=\emptyset$ for all $1\leqslant  p\leqslant  p_0$, where $C'=C'(\varepsilon)\in [C_0,+\infty)$ is a number which will be specified later; and
  \item[(iii)] $\diam_{\EC}(\widetilde{R}_n^{end})<\varepsilon$.
\end{itemize}

Let $g_n$ be the inverse branch of $F$ which maps $R_{u\theta,n}$ to $R_\theta([\tfrac{1}{u^{n+1}},\tfrac{1}{u}])$, where $u=\deg(F:A\to A)\geqslant 2$. Then we have
\begin{equation}
R_{\theta,n+1}=R_{\theta,1}\cup g_n(R_{u\theta,n}).
\end{equation}
Note that $g_n$ can be analytically extended to any continuous curve in $W$ which is homotopic to $R_{u\theta,n}$.
Therefore, for $\widetilde{R}_n$ obtained in the inductive assumption which satisfies $\widetilde{R}_n=\widetilde{R}_n^{ess}\cup\widetilde{R}_n^{end} \simeq R_{u\theta,n}$ in $W$, $g_n(\widetilde{R}_n)$ is well-defined and homotopic to $g_n(R_{u\theta,n})$. Let
\begin{equation}
\gamma_{n+1}':= R_{\theta,1}\cup g_n(\widetilde{R}_n^{ess}) \text{\quad and\quad} \gamma_{n+1}'':=g_n(\widetilde{R}_n^{end}).
\end{equation}
Then
\begin{equation}
\gamma_{n+1}:=\gamma_{n+1}'\cup \gamma_{n+1}'' \simeq R_{\theta,n+1} \text{\quad in } W.
\end{equation}
Since $\widetilde{R}_n\subset W\setminus D(E_1)$, it follows that $\gamma_{n+1}'$ and $\gamma_{n+1}''$ are continuous curves in $W\setminus D(E_1)$.
See Figure \ref{Fig-homotopy}.
In the following we deform $\gamma_{n+1}$ in $W\setminus D(E_1)$ such that the inductive assumptions hold for step $n+1$.

\medskip
By (i), (ii), \eqref{equ:l-W} and Lemma \ref{lem:orbifold}, there exists $\mu=\mu(\varepsilon)\in (0,1)$ which is independent of $n$ such that
\begin{equation}\label{equ:l-W-mu}
l_{\sigma}\big(\gamma_{n+1}'\big)< C_0 +\mu C'.
\end{equation}
By (iii), there exists a small number $\varepsilon'>0$ depending only on $\varepsilon$ such that
\begin{equation}\label{equ:end-bd}
\diam_{\EC}(\gamma_{n+1}'')<\varepsilon'.
\end{equation}

Let $z_0=R_\theta(1)$ and $z_1=R_\theta(1/u^{n+1})$ be the two end points of $\gamma_{n+1}$. We have the following two cases:
\begin{itemize}
\item[(a)] $z_1\not\in Y_p$ for any $1\leqslant  p\leqslant  p_0$; or
\item[(b)] $z_1\in Y_p$ for some $1\leqslant  p\leqslant  p_0$.
\end{itemize}

\medskip
Suppose Case (a) holds. Note that $\varepsilon>0$ is assumed to be sufficiently small. By \eqref{equ:l-W-mu} and \eqref{equ:end-bd}, there exist a number $C_1=C_1(\varepsilon)>0$ and a continuous curve $\widetilde{\gamma}_{n+1}$ such that $\widetilde{\gamma}_{n+1}\simeq \gamma_{n+1}$ in $W\setminus D(E_1)$, $\widetilde{\gamma}_{n+1}\cap Y_p=\emptyset$ for any $1\leqslant  p\leqslant  p_0$ and
\begin{equation}
l_{\sigma}\big(\widetilde{\gamma}_{n+1}\big)\leqslant  l_{\sigma}\big(\gamma_{n+1}'\big)+C_1<C_0+\mu C'+C_1.
\end{equation}
In this case we define $\widetilde{R}_{n+1}:=\widetilde{R}_{n+1}^{ess}\cup \widetilde{R}_{n+1}^{end}$, where
\begin{equation}
\widetilde{R}_{n+1}^{ess}:=\widetilde{\gamma}_{n+1} \text{\quad and\quad} \widetilde{R}_{n+1}^{end}:=\emptyset.
\end{equation}
Then the induction at step $n+1$ is finished by setting $C':=(C_0+C_1)/(1-\mu)$.

\medskip
Suppose Case (b) holds.
Let $z_*\in \partial Y_p\setminus\partial D_p$ be the point in $W$ such that the segment $[z_*,z_1]$ is perpendicular to $\partial Y_p$ (see Figure \ref{Fig-homo-seg}). Note that $\varepsilon'$ in \eqref{equ:end-bd} depends on $\varepsilon$. Hence there exist a number $C_2=C_2(\varepsilon)>0$ and a continuous curve $\widehat{\gamma}_{n+1}\subset W\setminus D(E_1)$ such that $\widehat{\gamma}_{n+1}\cup[z_*,z_1]\simeq \gamma_{n+1}$ in $W$, $\widehat{\gamma}_{n+1}\cap Y_p=\emptyset$ for any $1\leqslant  p\leqslant  p_0$ and
\begin{equation}
l_{\sigma}\big(\widehat{\gamma}_{n+1}\big)<C_0 +\mu C'+C_2.
\end{equation}
In this case we define $\widetilde{R}_{n+1}:=\widetilde{R}_{n+1}^{ess}\cup \widetilde{R}_{n+1}^{end}$, where
\begin{equation}
\widetilde{R}_{n+1}^{ess}:=\widehat{\gamma}_{n+1} \text{\quad and\quad} \widetilde{R}_{n+1}^{end}:=[z_*,z_1].
\end{equation}
Note that $\diam_{\EC}(\widetilde{R}_{n+1}^{end})<\varepsilon$. Then the induction at step $n+1$ is finished by setting $C':=(C_0+C_2)/(1-\mu)$.
Denote $C_3:=\max\{C_1,C_2\}$.

\begin{figure}[!htpb]
  \setlength{\unitlength}{1mm}
  \centering
  \includegraphics[width=0.7\textwidth]{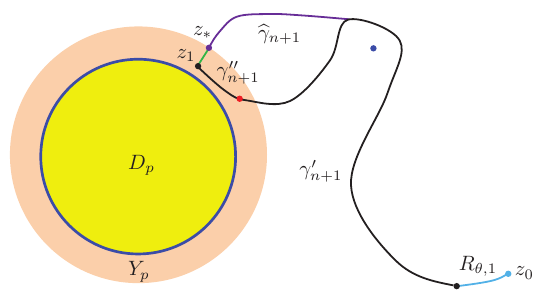}
  \caption{A homotopic deformation of $\gamma_{n+1}=\gamma_{n+1}'\cup \gamma_{n+1}''$. }
  \label{Fig-homo-seg}
\end{figure}

\medskip
To sum up, we have proved that for any $\theta\in\R/\Z$ and any $n\geqslant 1$, there exists a continuous curve $\widetilde{R}_n$ such that $\widetilde{R}_n\simeq R_{\theta,n}$ in $W$, where $\widetilde{R}_n=\widetilde{R}_n^{ess}\cup\widetilde{R}_n^{end}$ with $\widetilde{R}_n^{ess}\cap Y_p=\emptyset$ for all $1\leqslant  p\leqslant  p_0$ and
\begin{equation}
l_{\sigma}(\widetilde{R}_n^{ess})< C'=(C_0+C_3)/(1-\mu) \text{\quad and\quad} \diam_{\EC}(\widetilde{R}_n^{end})<\varepsilon.
\end{equation}
Note that there exists a constant $b>0$ such that $\sigma(z)>b$ for all $z\in W$. Hence we have $l_{\EC}(\widetilde{R}_n^{ess})<C:=C'/b$.
The proof is complete.
\end{proof}

\begin{proof}[Proof of Proposition \ref{prop:attracting}]
Let $A$ be an immediate super-attracting basin and suppose $f: A\to A$ has degree $u\geqslant 2$.
For every $n\geqslant 0$, we parameterize the equipotential curves as
\begin{equation}
\Gamma_n(\theta):=E_{\frac{1}{u^n}}(\theta)=\psi(e^{\frac{1}{u^n}+2\pi\ii\theta}), \text{\quad where }\theta\in\R/\Z.
\end{equation}
Hence every ray segment $R_{\theta,n}$ connects $\Gamma_0(\theta)$ with $\Gamma_n(\theta)$, where $\theta\in\R/\Z$ and $n\geqslant 1$.

For any given $\varepsilon>0$, let $C=C(\varepsilon)>0$ and $\widetilde{R}_n=\widetilde{R}_n^{ess}\cup \widetilde{R}_n^{end}\simeq R_{\theta',n}$ be determined by Lemma \ref{lem:essential-bd}, where $\theta'\in\R/\Z$. For any $m\geqslant 1$, the map $f^m:R_\theta([\tfrac{1}{u^{n+m}},\tfrac{1}{u^m}])\to R_{\theta',n}$ with $\theta'=u^m\theta$ is a homeomorphism, where $R_\theta([\tfrac{1}{u^{n+m}},\tfrac{1}{u^m}])$ is a continuous curve in $W$ connecting $\Gamma_{n+m}(\theta)$ with $\Gamma_m(\theta)$. Let $\widetilde{R}_{n,m}$ be the pullback curve of $\widetilde{R}_n$ under $f^m$ such that $\widetilde{R}_{n,m}\simeq R_\theta([\tfrac{1}{u^{n+m}},\tfrac{1}{u^m}])$ in $W$.

Since $l_{\EC}(\widetilde{R}_n^{ess})<C=C(\varepsilon)$, there exist a continuous parametrization $\widetilde{R}_n^{ess}(t)$ with $t\in[0,1]$ and finitely many points $0=t_0<t_1<\cdots<t_M=1$ with  $M=M(\varepsilon)\geqslant 1$ such that
\begin{equation}
l_{\EC}\big(\widetilde{R}_n^{ess}([t_i,t_{i+1}])\big)<\delta_0/2, \text{\quad for all } 0\leqslant  i\leqslant  M-1,
\end{equation}
where $\delta_0>0$ is the constant introduced in Lemma \ref{lem:contrac}. Note that each subcurve $\widetilde{R}_n^{ess}([t_i,t_{i+1}])$ is contained in a Jordan disk $U_i\subset W_1$ with $\diam_{\EC}(U_i)<\delta_0$, where $0\leqslant  i\leqslant  M-1$.
Without loss of generality, we assume that $\varepsilon>0$ is small such that $\diam_{\EC}(\widetilde{R}_n^{end})<\varepsilon<\delta_0/2$.

By Lemma \ref{lem:contrac}, there exists an integer $N\geqslant 1$ such that for all $m\geqslant N$, every component of $f^{-m}(U_i)$ and $f^{-m}(\widetilde{R}_n^{end})$ have spherical diameter less than $\varepsilon/(2M)$. Hence if $m\geqslant N$, for all $\theta\in\R/\Z$ we have
\begin{equation}
\dist_{\EC}\big(\Gamma_{m}(\theta),\Gamma_{m+n}(\theta)\big)\leqslant  \diam_{\EC}\big(\widetilde{R}_{n,m}\big)<M\cdot\frac{\varepsilon}{2M}+\frac{\varepsilon}{2M}<\varepsilon.
\end{equation}
This implies that $\{\Gamma_n(\theta):\theta\in\R/\Z\}$ converges uniformly to a continuous curve $\Gamma(\theta):\R/\Z\to \overline{A}$. In particular, $\Gamma(\R/\Z)=\partial A$ is locally connected.
\end{proof}

\begin{rmk}
If $f$ is a polynomial in the Main Theorem, then the Julia set of $f$ is locally connected by Proposition \ref{prop:attracting} since $J(f)=\partial A$.
\end{rmk}

\subsection{Proof of the Main Theorem}

We give a proof of the Main Theorem in this subsection based on the criterion in Lemma \ref{lem:LC-criterion}.

\begin{proof}[Proof of the Main Theorem]
Let $f$ be a rational map in the Main Theorem. Then every periodic Fatou component of $f$ is either a bounded type Siegel disk or an attracting basin. By \cite{Zha11} and Proposition \ref{prop:attracting}, the boundaries of all periodic Fatou components of $f$ are locally connected. This implies that the boundaries of all Fatou components of $f$ are locally connected. By Lemma \ref{lem:LC-criterion}, it suffices to prove that for every $\varepsilon>0$ there are at most finitely many Fatou components of $f$ with the spherical diameter $>\varepsilon$.

Iterating $f$ if necessary,  we assume that all periodic Fatou components of $f$ have period one.
Let $U_0$ be a fixed Fatou component of $f$. If $U_0$ is completely invariant, then $J(f)=\partial U_0$ is locally connected by Proposition \ref{prop:attracting}. Suppose $U_0$ is not completely invariant. Then $f^{-1}(U_0)\setminus U_0$ consists of at least one and at most finitely many Fatou components $\{U_{1,1}, \cdots, U_{1,i_1}\}$, where $i_1\geqslant 1$. There exists an integer $k_0\geqslant 1$ such that any connected component $U_{k_0+1,j}$ of $f^{-k_0}(U_{1,i})$ with $1\leqslant  i\leqslant  i_1$ is disjoint with $\MP(f)$. Therefore, for any connected component $U_{k+k_0+1,\ell}$ of $f^{-k}(U_{k_0+1,j})$ with $k\geqslant 1$, the map $f^k:U_{k+k_0+1,\ell} \to U_{k_0+1,j}$ is conformal.

Let $\delta_0>0$ be the constant introduced in Lemma \ref{lem:contrac}. Then there exists a uniform constant $M\geqslant 1$ such that every $U_{k_0+1,j}$ can be covered by $M$ Jordan disks $\{D_1,\cdots,D_M\}$ in $W_1$ whose spherical diameters are less than $\delta_0$. By Lemma \ref{lem:contrac}, there exists an integer $N\geqslant 1$ such that for all $k\geqslant N$, any connected component of $f^{-k}(D_i)$ has spherical diameter less than $\varepsilon/M$.
This implies that if $k\geqslant N$, then any connected component $U_{k+k_0+1,\ell}$ of $f^{-k}(U_{k_0+1,j})$ satisfies
$\diam_{\EC}(U_{k+k_0+1,\ell})<\varepsilon$.
Note that $f^{-n}(U_0)$ has only finitely many components for every $1\leqslant  n\leqslant  N+k_0$ and $f$ has only finitely many fixed Fatou components. Thus there are only finitely many Fatou components of $f$ whose spherical diameters are greater than $\varepsilon$.
\end{proof}

%----------------------------------------------------------------------------------------------------------------
\bibliographystyle{amsalpha}
\bibliography{E:/Latex-model/Ref1}

\end{document}